\definecolor{bg}{rgb}{0.93,0.93,0.93}
\newtheorem{theorem}{Theorem}[section]
\newtheorem{proposition}[theorem]{Proposition}
\newtheorem{remark}[theorem]{Remark}
\acrodef{pde}[PDE]{partial differential equation}
\acrodef{rhs}[RHS]{right-hand side}
\acrodef{fe}[FE]{finite element}
\acrodef{dg}[DG]{discontinuous Galerkin}
\acrodef{fem}[FEM]{finite element method}
\acrodef{dof}[DoF]{degree of freedom}
\acrodef{nwp}[NWP]{numerical weather prediction}
\acrodef{nn}[NN]{neural network}
\acrodef{cnn}[CNN]{convolutional neural network}
\acrodef{pinn}[PINN]{physics-informed \ac{nn}}
\acrodef{vpinn}[VPINN]{variational \ac{pinn}}
\acrodef{ivpinn}[IVPINN]{interpolated \ac{vpinn}}
\acrodef{feinn}[FEINN]{\ac{fe} interpolated \ac{nn}}
\acrodef{vjp}[VJP]{vector Jacobian product}
\acrodef{gmg}[GMG]{geometric multigrid}
\acrodef{spd}[SPD]{symmetric positive definite}
\acrodef{ihcp}[IHCP]{inverse heat conduction problem}
\newcommand{\tnor}[1]{{\left\vert\kern-0.25ex\left\vert\kern-0.25ex\left\vert #1 
\right\vert\kern-0.25ex\right\vert\kern-0.25ex\right\vert}}
\newcommand{\norm}[1]{\left\lVert #1 \right\rVert}
\newcommand{\ltwonorm}[1]{\left\lVert #1 \right\rVert _{L^2(\Omega)}}
\newcommand{\honenorm}[1]{\left\lVert #1 \right\rVert _{H^1(\Omega)}}
\begin{document}

\title[Finite element interpolated neural networks]{Finite element interpolated neural networks for solving forward and inverse problems}
\author{Santiago Badia$^{1,2,*}$}
\email{santiago.badia@monash.edu}
\author{Wei Li$^1$}
\email{wei.li@monash.edu}
\author{Alberto F. Mart\'{\i}n$^3$}
\email{alberto.f.martin@anu.edu.au}
\address{$^1$ School of Mathematics, Monash University, Clayton, Victoria 3800, Australia.}
\address{$^2$ Centre Internacional de M\`etodes Num\`erics a l'Enginyeria, Campus Nord, UPC, 08034, Barcelona, Spain.}
\address{$^3$ School of Computing, The Australian National University, Canberra ACT 2600, Australia.}
\address{$^*$ Corresponding author.}

\date{\today}

\begin{abstract}
    We propose a general framework for solving forward and inverse problems constrained by partial differential equations, where we interpolate neural networks onto finite element spaces to represent the (partial) unknowns. The framework overcomes the challenges related to the imposition of boundary conditions,  the choice of collocation points in physics-informed neural networks, and the integration of variational physics-informed neural networks. A numerical experiment set confirms the framework's capability of handling various forward and inverse problems. In particular, the trained neural network generalises well for smooth problems, beating finite element solutions by some orders of magnitude. We finally propose an effective one-loop solver with an initial data fitting step (to obtain a cheap initialisation) to solve inverse problems.
\end{abstract}

\keywords{neural networks, PINNs, finite elements, PDE approximation, inverse problems}

\maketitle

\section{Introduction}

Many problems in science and engineering are modelled by \emph{low-dimensional} (e.g., 2 or 3 space dimensions plus time) \acp{pde}. Since \acp{pde} can rarely be solved analytically, their solution is often approximated using numerical methods, among which the \ac{fem} has been proven to be effective and efficient for a broad range of problems. The \ac{fem} enjoys a very solid mathematical foundation~\cite{Ern2021}. For many decades, advanced discretisations have been proposed, e.g., preserving physical structure~\cite{Arnold2006}, and optimal (non)linear solvers that can efficiently exploit  large-scale supercomputers have been designed~\cite{Brune2015,Badia2016,Drzisga2017}. 

Grid-based numerical discretisations can readily handle forward problems. In a forward problem, all the data required for the \ac{pde} model to be well-posed is provided (geometry, boundary conditions and physical parameters), and the goal is to determine the state of the model. In an inverse problem setting, however, the model parameters are not fully known, but one can obtain some observations, typically noisy and/or partial, of the model state. Inverse problem solvers combine the partially known model and the observations to infer the information which is missing to complete the model. Inverse problems can be modelled using \ac{pde}-constrained minimisation~\cite{hinze2008optimization}.

Traditional numerical approximations for low-dimensional \acp{pde}, like \ac{fem}, are linear. \Ac{fe} spaces are finite-dimensional vector spaces in which one seeks for the best approximation in some specific measure. As a result, the method/grid is not adapted to local features (e.g., sharp gradients or discontinuities) and convergence can be slow for problems that exhibit multiple scales. Although adaptive \ac{fe} methods can efficiently handle this complexity, they add an additional loop to the simulation workflow (the mark and refine loop) and problem-specific robust error estimates have to be designed~\cite{Ainsworth1997}. 

When the \ac{fem} is used to solve \ac{pde}-constrained inverse problems, the unknown model parameters are usually described using \ac{fe}-like spaces, even though \ac{nn} representations have recently been proposed~\cite{NNAugumentedFEM2017,Hybrid_FEM-NN_2021}. The loss function accounts for the misfit term between the observation and the state, which in turns depends on the unknown model parameters. The gradient of the loss function with respect to the unknown model parameters requires a chain rule that involves the solution of the forward problem. An efficient implementation of this gradient relies on the adjoint method~\cite{Givoli2021}. Inverse problem solvers add an additional loop to the simulation workflow, which involves the solution of the full forward problem and the adjoint of its linearisation at each iteration. There is usually a burden of computation cost in the first stages of the adjoint method, when full forward problems are solved despite being far from the desired solution.

The tremendous success of \acp{nn} in data science has motivated many researchers to explore their application in \ac{pde} approximation. 
\Acp{pinn} have been proposed in~\cite{PINNs2019} to solve forward and inverse problems. A \ac{nn} approximates the \ac{pde} solution, while the loss function evaluates the strong \ac{pde} residual on a set of randomly selected collocation points. \acp{nn} can also be combined with a weak statement of the \ac{pde} (see, e.g., Deep Ritz Method~\cite{E2018} or \acp{vpinn}~\cite{Kharazmi2021}). \acp{nn} have some very interesting properties that make them perfectly suited for the approximation of forward and inverse \ac{pde} problems. First, \acp{nn} are genuinely \emph{nonlinear approximations}, similar to, e.g., free-knot B-splines~\cite{DeVore1998}. The solution is sought in a nonlinear manifold in the parameter space, which automatically adapts to the specific problem along the training process.\footnote{It is illustrative to observe how e.g. linear regions in \acp{nn} with ReLU activation functions adapt to the solution being approximated~\cite{https://doi.org/10.48550/arxiv.2303.11617}. The decomposition of the physical domain into linear regions is a polytopal conforming mesh.} 
Unlike \ac{fe} bases, \acp{nn} are also perfectly suited (and originally designed) for data fitting. The \ac{nn} parameters usually have a \emph{global} effect on the overall solution. As a result, one can design solvers for \ac{pde}-constrained inverse problems in which both the unknown model parameters and state variables are learnt along the same training process~\cite{Karniadakis2021}. The loss function includes the data misfit and a penalised \ac{pde} residual term. State and unknown model parameters are not explicitly linked by the forward problem, and thus no forward problems are involved in each iteration of the optimisation loop. As a result, one can use \acp{nn} to design adaptive forward and inverse problems with a one-loop solver.

Despite all these efforts, \acp{pinn} and related methods have not been able to outperform traditional numerical schemes {for low dimensional} \acp{pde}; see, e.g., the study in~\cite{https://doi.org/10.48550/arxiv.2205.14249}. There are some (intertwined) reasons for this. First, nonlinear approximability comes at the price of non-convex optimisation at the training process. Currently, non-convex optimisation algorithms for \ac{nn} approximation of \acp{pde} are costly and unreliable, especially when the \ac{pde} solutions contain multi-scale features or shocks~\cite{Zhu_2019,Fuks_2020}. 
As a result, despite the enhanced expressivity of \acp{nn}, this improvement is overshadowed by poor and costly training. Second, the integration of the \ac{pde} residual terms is not exact, and the error in the integration is either unbounded or not taken into account. Poor integration leads to poor convergence to the desired solution (due to a wrong cost functional) and one can find examples for which the optimal solution is spurious~\cite{Rivera2022}. In~\cite{https://doi.org/10.48550/arxiv.2303.11617}, the authors propose adaptive quadratures for \ac{nn} in low dimensions that are proven to be more accurate that standard Monte Carlo, especially for sharp features. Finally, the usual \ac{pde} residual norms being used in the loss function are ill-posed at the continuous level in general. It is well-known that such a variational crime has negative effects in the convergence of iterative solvers for \ac{fe} discretisations \cite{Mardal2010}, and it will also hinder the non-convex optimisation at the training process. 
These issues prevent a solid mathematical foundation of these methods, and strong assumptions are required to prove partial error estimates \cite{PINNsPDEError2020,PINNsInvErr2020}. 

Additionally, \acp{nn} have not been designed to strongly satisfy Dirichlet boundary conditions. Thus, the loss function must include penalty terms that account for the boundary conditions, which adds an additional constraint to the minimisation and has a very negative effect on the training ~\cite{Chen2020}. Such imposition of boundary conditions is not consistent, and Nitsche's method comes with the risk of ending up with an ill-posed formulation.\footnote{The coefficient in Nitsche's method must be \emph{large enough} for stability, which can be mathematically quantified in \ac{fem} using inverse inequalities. However, \acp{nn} nature do not enjoy inverse inequalities; gradients can be arbitrarily large, and can only be indirectly bounded via regularisation.} Recently, some authors have proposed to multiply the \ac{nn} with a distance function that vanishes on the Dirichlet boundary \cite{Sukumar2022}. However, this arguably complicates the geometrical discretisation step compared to grid-based methods. The computation of such distance functions is complex in general geometries and has only been used for quite simple cases in 2D. Furthermore, it is unclear how to use this approach for non-homogeneous boundary conditions, which require a lifting of the Dirichlet values inside the domain. In comparison, (unstructured) mesh generation is a mature field and many mesh generators are available \cite{Gmsh2009}. Unfitted \acp{fe} have become robust and general schemes that can handle complex geometries on Cartesian meshes \cite{dePrenter2023}. With a mesh, the definition of the lifting is trivial, e.g., one can use a \ac{fe} offset function.

Lately, significant efforts have been made to combine \ac{fem} and \acp{nn}. The authors in~\cite{neufenet2021} propose a methodology to approximate parametric PDEs. It makes use of an energy minimisation approach and a \ac{cnn} that returns the \acp{dof} of a \ac{fe} spaces. The method solves the integration issues of \acp{pinn}. However, this approach cannot handle non-trivial domains and/or non-uniform meshes, as \acp{cnn} are primarily designed for processing image-like data. \acp{cnn} that return \ac{fe} functions have also been proposed in \cite{mallon2023neural} to learn level-sets in topology optimisation that minimise a given cost function, but make use of a standard \ac{fe} solver at each iteration of the optimiser.  

Another interesting study that combines \ac{fem} and \acp{nn} is presented in~\cite{BerroneIVPINN2022}. The idea of this method is to interpolate a deep \ac{nn} onto a \ac{fe} space and design a well-posed \ac{pde}-residual loss functional. The authors compare the solution of the interpolated \ac{nn} (a \ac{fe} function) with different standard \ac{pinn} formulations. Despite the fact that the solution belongs to a fixed \ac{fe} space (and cannot exploit nonlinear approximation, compared to the other \ac{pinn} strategies), the results are superior in general. This technique, coined \acp{ivpinn}, has been applied to forward coercive grad-conforming \acp{pde} on rectangular domains. Unlike other \acp{pinn}, a priori error bounds have been obtained~\cite{BerroneIVPINN2022}, even though suboptimal compared to the \ac{fem} solution. 

One can argue what is the benefit of getting sub-optimal \ac{fe} solutions (measured in the energy norm) using a far more expensive non-convex optimisation solver. However, \acp{ivpinn} shed light on the negative impact that integration, residual definition, imposition of boundary conditions, lack of well-posedness, and training have on a straightforward approximation of \acp{pde} using \acp{nn}. In~\cite{radaptiveDL2022}, the authors propose an $r-$adaptive deep learning method, in which the \acp{nn} are interpolated onto a \ac{fe} space with a mesh that dynamically changes during training. Compared to \acp{ivpinn}, the  \ac{fe} mesh is not fixed but learned during the training process. However, the proposed method is limited to tensor product meshes, which also prevents its application to complicated geometries and reduces the type of meshes that can be learned. Moreover, both methods rely on a distance function and an offset function for imposition of the Dirichlet boundary conditions~\cite{https://doi.org/10.48550/arxiv.2210.14795}, which can be problematic when Dirichlet functions are complicated or Dirichlet boundaries are irregular.

In this work, we build upon \acp{ivpinn} ideas. However, instead of enforcing boundary conditions at the \ac{nn}, we propose to strongly impose the boundary conditions at the \ac{fe} space level. This allows us to readily handle complex geometries without the need to define, e.g., distance functions. To distinguish the two approaches, we coin the proposed method \acp{feinn}. Besides, we explore the benefits of considering the trained \ac{nn} (instead of the \ac{fe} interpolation) as the final solution of the problem, i.e., evaluate how the trained \ac{nn} \emph{generalises}. We also discuss different \ac{pde} residual norms and suggest to use Riesz preconditioning techniques to end up with a well-posed formulation in the continuous limit. We perform a numerical analysis of the method, and prove that the proposed formulation can recover (at least) the \emph{optimal} \ac{fe} bounds. Next, we apply these techniques to inverse problems, using a one-loop algorithm, as it is customary in \acp{pinn}. We exploit the excellent properties of \acp{nn} to fit data. We propose a first step in which we get a state initial guess by data fitting. In a second step, we learn the unknown model parameters by \ac{pde}-residual minimisation for a fixed state. The previous steps provide an initialisation for a third fully coupled step with a mixed data-\ac{pde} residual cost function. 

We carry out a comprehensive set of numerical experiments for forward problems. We check that expressive enough \acp{nn} can return \ac{fe} solutions for different polynomial orders. For smooth problems, the generalisation results for the trained \acp{nn} are striking. The solution obtained with the non-interpolated \ac{feinn} solution can be orders of magnitude more accurate than the \ac{fe} solution on the same mesh, while \acp{ivpinn} do not generalise that well. The definition of the residual norm (and its preconditioned version) can have a tremendous impact in the convergence of the minimisation algorithm. Finally, we test the proposed algorithm for inverse problems. Unlike standard inverse solvers for grid-based methods, we can solve inverse problems with effective and cheap initialisation and one-loop algorithms, even without any kind of regularisation terms. 

The outline of the article is the following. Sec.~\ref{sec:method_forward} states the model elliptic problem that we tackle, its \ac{fe} discretisation, the \ac{nn} architecture, and the proposed loss functions in the \ac{feinn} discretisation. Sec.~\ref{sec:num-an} proves that the interpolation of an expressive enough \ac{nn} recovers the \ac{fe} solution. In Sec.~\ref{sec:method_inverse}, the proposed discretisation is applied to inverse problems, by defining a suitable loss function that includes data misfit and a multi-step minimisation algorithm. Sec.~\ref{sec:implementation} describes the implementation of the methods and Sec.~\ref{sec:experiments} presents the numerical experiments on several forward and inverse problems. Finally, Sec.~\ref{sec:conclusions} draws conclusions and lists potential directions for further research.

\section{Forward problem discretisation using neural networks} \label{sec:method_forward}

\subsection{Continuous problem}\label{subsec:cont-prob}

In this work, we aim to approximate elliptic PDEs using a weak (variational) setting. As a model problem, we consider a convection-diffusion-reaction equation, even though the proposed methodology can readily be applied to other coercive problems. The problem reads: find $u \in H^1(\Omega)$ such that  
\begin{equation} \label{eq:conv_diff_react_strong_form}
- \pmb{\nabla}\cdot(\kappa \pmb{\nabla} u) + (\pmb{\beta} \cdot \pmb{\nabla}) u + \sigma u = f \quad \hbox{in }  \Omega, \quad 
u = g \quad \hbox{on }  \Gamma_D, \quad
\kappa \pmb{n} \cdot \pmb{\nabla} u = \eta \quad \hbox{on } \Gamma_N,
\end{equation}
where 
$\Omega \subset \mathbb{R}^d$ is a  Lipschitz polyhedral domain, $\Gamma_D$ and $\Gamma_N$ are a partition of its boundary such that  $\mathrm{meas}(\Gamma_D) > 0$ , $\kappa$, $\sigma \in L^\infty(\Omega)$, $\pmb{\beta} \in W^{1,\infty}(\Omega)^d$ such that $\sigma - \pmb{\nabla}\cdot \pmb{\beta} > 0$ and $\pmb{\beta} \cdot \pmb{n}_{|\Gamma_N} \ge 0$ , $f \in H^{-1}(\Omega)$, $g \in H^{1/2}(\Gamma_D)$, and $\eta \in H^{-1/2}(\Gamma_N)$.     

Consider the space $U \doteq H^1(\Omega)$, $\tilde{U} \doteq H^1_{0,\Gamma_D}(\Omega) \doteq \left\{  v \in U \ : \ v_{|\Gamma_D} = 0 \right\}$, a continuous lifting $\bar{u}\in U$ of the Dirichlet boundary condition (i.e., $\bar{u} = g$ on $\Gamma_D$), and the forms 
\[
a(u,v) = \int_{\Omega} \kappa \pmb{\nabla}u \cdot \pmb{\nabla}v + (\pmb{\beta} \cdot \pmb{\nabla}) u v + \sigma u v, \quad 
\ell(v) = \int_{\Omega} f v + \int_{\Gamma_N}^{} \eta v.
\] 
(We use the symbol $\tilde{\cdot}$ to denote trial functions and spaces with zero traces.) The variational form of the problem reads: find $u = \bar{u} + \tilde{u}$ where 
\begin{equation} \label{eq:conv_diff_react_weak_form}
  \tilde{u} \in \tilde{U} \ : \ a(\tilde{u},v) = \ell(v) - a(\bar{u},v), \quad \forall v \in \tilde{U}.
\end{equation}
In this setting, the problem with a non-homogeneous Dirichlet boundary condition is transformed into a homogeneous one via the lifting and a modification of the \ac{rhs}. The well-posedness of the problem relies on the coercivity and continuity of the forms:
\[
a(u,u) \geq \gamma \|u\|_U^2, \quad a(u,v) \leq \xi \|u\|_U \|v\|_U, \quad \ell(v) \leq \chi \|v\|_U.
\] 
Below, we will make use of the PDE residual 
\begin{equation} \label{eq:conv_diff_react_weak_residual}
  \mathcal{R}(\tilde{u}) \doteq \ell(\cdot) - a(\tilde{u} + \bar{u},\cdot) \in \tilde{U}' .
\end{equation}

\subsection{Finite element approximation}\label{sec:fem-appr}

Next, we consider a family of conforming shape-regular partitions $\{\mathcal{T}_h\}_{h > 0}$ of $\Omega$ such that their intersection with $\Gamma_N$ and $\Gamma_D$ is also a partition of these lower-dimensional manifolds; $h$ represents a characteristic mesh size. On such partitions, we can define a trial \ac{fe} space $U_h \subset U$ of order $k_U$ and the subspace $\tilde{U}_h \doteq U_h \cap H_{0,\Gamma_D}^1(\Omega)$ of \ac{fe} functions with zero trace. 

We define a \ac{fe} interpolant $\pi_h : \mathcal{C}^0 \rightarrow U_h$ obtained by evaluation of the \acp{dof} of $U_h$. In this work, we consider grad-conforming Lagrangian (nodal) spaces (and thus composed of piece-wise continuous polynomials), and \acp{dof} are pointwise evaluations at the Lagrangian nodes. Analogously, we define the interpolant $\tilde{\pi}_h$ onto $\tilde{U}_h$. We can pick a \ac{fe} lifting $\bar{u}_{h} \in U_h$ such that $\bar{u}_h = \pi_h(g)$ on $\Gamma_D$. (The interpolant is restricted to $\Gamma_D$ and could be, e.g., a Scott-Zhang interpolant if $g$ is non-smooth.) Usually in \ac{fem}, $\bar{u}_h$ is extended by zero on the interior.   

Using the Galerkin method, the test space is defined as $V_h \doteq \tilde{U}_h$, and let $k_V$ be its order. Following \cite{BerroneIVPINN2022}, we also explore Petrov-Galerkin discretisations. To this end, we consider $k_V$ such that $s = k_U / k_V \in \mathbb{N}$, and a family of partitions $\mathcal{T}_{h/s}$ obtained after $s$ levels of uniform refinement of $\mathcal{T}_h$. In this case, we choose $V_h$ to be the \ac{fe} space of order $k_V$ on $\mathcal{T}_{h/s}$ with zero traces on $\Gamma_D$. We note that the dimension of $\tilde{U}_h$ and $V_h$ are identical. In this work, we only consider $k_V = 1$, i.e., a \emph{linearised} test \ac{fe} space. The well-posedness of the Petrov-Galerkin discretisation is determined by the discrete inf-sup condition:
\[
\underset{u_h \in \tilde{U}_h }{\mathrm{inf}} \underset{v_h \in V_h}{\mathrm{sup}} \frac{a(u_h,v_h)}{\|u_h\|_U \|v_h\|_U} \geq \beta > 0.
\]
In both cases, the problem can be stated as: find $u_h = \bar{u}_h + \tilde{u}_h$ where
\begin{equation} \label{eq:conv_diff_react_fe_weak_form}
  \tilde{u}_h \in \tilde{U}_h \ : \ a(\tilde{u}_h, v_h) = \ell(v_h) - a(\bar{u}_h,v_h), \quad \forall v_h \in V_h.
\end{equation}
We represent with $\mathcal{R}_h$ the restriction  $\mathcal{R}|_{{U}_h \times V_h}$. 
Given $u_h \in U_h$, $\mathcal{R}_h(u_h) \in V_h'$. $V_h'$ is isomorphic to $\mathbb{R}^N$, where $N$ is the dimension of $V_h$ (and $\tilde{U}_h$). This representation depends on the basis chosen to span $V_h$.          

\subsection{Neural networks}\label{subsec:nns}

We consider a fully-connected, feed-forward \ac{nn}, obtained by the composition of affine maps and nonlinear activation functions. The network architecture is represented by a tuple $(n_0, \ldots n_L)\in \mathbb{N}^{(L+1)}$, where $L$ is the number of layers and $n_k$ is the number of neurons on layer $1 \leq k \leq L$. We take $n_0 = d$ and, for scalar-valued \acp{pde}, we have $n_L = 1$. In this work, we use $n_1 = n_2 = ... = n_{L-1} = n$, i.e. all the hidden layers have an equal number of neurons $n$.

At each layer $1 \leq k \leq L$, we represent with $\pmb{\Theta}_k: \mathbb{R}^{n_{k-1}} \to \mathbb{R}^{n_k}$ the affine map at layer $k$, defined by $\pmb{\Theta}_k \pmb{x} = \pmb{W}_k \pmb{x} + \pmb{b}_k$ for some weight matrix $\pmb{W}_k \in \mathbb{R}^{n_k \times n_{k-1}}$ and bias vector $\pmb{b}_k \in \mathbb{R}^{n_k}$. 
The activation function $\rho: \mathbb{R} \to \mathbb{R}$ is applied element-wise after every affine map except for the last one. Given these definitions, the network is a parametrizable function $\mathcal{N}(\pmb{\theta}): \mathbb{R}^d \to \mathbb{R}$ defined as: 
\begin{equation} \label{eq:nn_structure}
  \mathcal{N}(\pmb{\theta}) = \pmb{\Theta}_L \circ \rho \circ \pmb{\Theta}_{L-1} \circ \ldots \circ \rho \circ \pmb{\Theta}_1,
\end{equation}
where $\pmb{\theta}$ stands for the collection of all the trainable parameters $\pmb{W}_k$ and $\pmb{b}_k$ of the network. Although the activation functions could be different at each layer or even trainable, we apply the same, fixed activation function everywhere.
However, we note that the proposed methodology is not restricted to this specific \ac{nn} architecture. In this work, we denote the \ac{nn} architecture with $\mathcal{N}$ and a realisation of the \ac{nn} with $\mathcal{N}(\pmb{\theta})$.

\subsection{Finite element interpolated neural networks} \label{subsec:feinns}
In this work, we propose the following discretisation of~\eqref{eq:conv_diff_react_weak_form}, which combines the \ac{nn} architecture in~\eqref{eq:nn_structure} and the \ac{fe} problem in~\eqref{eq:conv_diff_react_fe_weak_form}. Let us consider a norm $\|\cdot\|_Y$ for the discrete residual (choices for this norm are discussed below). 
 We aim to find
\begin{equation} \label{eq:feinn_forward_loss}
  u_\mathcal{N} \in \mathrm{arg} \underset{w_\mathcal{N} \in \mathcal{N}}{\mathrm{min}}  \mathscr{L}(w_\mathcal{N}), \qquad \mathscr{L}(w_\mathcal{N}) \doteq \| \mathcal{R}_h(\tilde{\pi}_h(w_\mathcal{N})) \|_Y.
\end{equation} 
The computation of $u_\mathcal{N}$ involves a non-convex optimisation problem (due to the nonlinear dependence of $u_\mathcal{N}$ on $\pmb{\theta}$). We prove in the next section that the  $\tilde{\pi}_h(u_\mathcal{N})$ equal to the \ac{fe} solution is a global minimum of this functional.
  
In this method, the \ac{nn} is \emph{free} on $\Gamma_D$, the imposition of the Dirichlet boundary conditions relies on a \ac{fe} lifting $\bar{u}_h$ and the interpolation $\tilde{\pi}_h$ onto $\tilde{U}_h$ applied to the \ac{nn} (thus vanishing on $\Gamma_D$). Conceptually, the proposed method trains a \ac{nn} \emph{pinned} on the \acp{dof} of the \ac{fe} space $\tilde{U}_h$, with a loss function that measures the \ac{fe} residual of the interpolated \ac{nn} for a given norm. The motivation behind the proposed method is to eliminate the Dirichlet boundary condition penalty term in standard \acp{pinn} and related methods~\cite{PINNs2019,E2018}, while avoiding enforcing the conditions at the \ac{nn} level (see, e.g.,~\cite{Sukumar2022} for \acp{pinn} and~\cite{https://doi.org/10.48550/arxiv.2210.14795} for \acp{vpinn}). It also solves the issues related to Monte Carlo integration~\cite{Rivera2022} and avoids the need to use adaptive quadratures~\cite{https://doi.org/10.48550/arxiv.2303.11617}. Using standard element-wise integration rules, the integrals in $\mathcal{R}_h$ can be exactly computed (or, at least, its error can be properly quantified for non-polynomial physical parameters and forcing terms). Moreover, in the current setting, we can consider different alternatives for the residual norm and better understand the deficiencies and variational crimes related to standard choices. 

\subsection{Loss function}\label{subsec:forward_loss}

As discussed above, the loss function involves the norm of the \ac{fe} residual. The residual is isomorphic to the vector $[\mathbf{r}_h(w_h)]_i = \left< \mathcal{R}(w_h), \varphi^i \right> \doteq \mathcal{R}(w_h)(\varphi^i)$, where $\{\varphi^i\}_{i=1}^N$ are the \ac{fe} shape functions that span the test space $V_h$. As a result, we can consider the loss function:
  \[
  \mathscr{L}(u_\mathcal{N}) = \|\mathbf{r}_h(\tilde{\pi}_h(u_\mathcal{N}))\|_{\ell^2}.
  \] 
This is the standard choice (possibly squared) in the methods proposed so far in the literature that rely on variational formulations \cite{Kharazmi2021,BerroneIVPINN2022,pmlr-v120-khodayi-mehr20a}. However, as it is well-known in the \ac{fe} setting, this quantity is ill-posed in the limit $h \downarrow 0$ \cite{Mardal2010}. At the continuous level, the norm  of $\mathcal{R}(u)$ is not defined. 

If the problem is smooth enough and $\mathcal{R}$ is well-defined on $L^2(\Omega)$ functions, we can define its $L^2(\Omega)$ projection onto $V_h$ as follows:
\[
\mathcal{M}_h^{-1}\mathcal{R}_h(w_h) \in V_h \ : \
\int_{\Omega} \mathcal{M}_h^{-1}\mathcal{R}_h(w_h) v_h = \mathcal{R}_h(w_h)(v_h), \quad \forall v_h \in V_h.
\] 
Next, one can define the cost function 
\[
  \mathscr{L}(u_\mathcal{N}) =  \| \mathcal{M}_h^{-1}\mathcal{R}_h ( \tilde{\pi}_h(u_\mathcal{N}))\|_{L^2(\Omega)},
\] 
which, for quasi-uniform meshes, is equivalent (up to a constant) to the scaling of the Euclidean norm, i.e., $h^d \|\mathbf{r}_h (\tilde{\pi}_h(u_\mathcal{N}))\|_{\ell^2}$. However, for non-smooth solutions, the $L^2$ norm of the residual still does not make sense at the continuous level, and thus, the convergence must deteriorate as $h \downarrow 0$. One can instead define a discrete Riesz projector $\mathcal{B}_h^{-1}: V_h' \to V_h$ such that
\[
\mathcal{B}_h^{-1}\mathcal{R}_h(w_h) \in V_h \ : \left( \mathcal{B}_h^{-1}\mathcal{R}_h(w_h), v_h \right)_U  = \mathcal{R}(w_h)(v_h), \quad \forall v_h \in V_h.
\]
For the model case proposed herein, $\|\cdot\|_U$ is the $H^1$ or $H^1_{0,\Gamma_D}$-norm and $\mathcal{B}_h^{-1}$  is the inverse of the discrete Laplacian. 
Then, one can consider the cost function: 
\begin{equation} \label{eq:preconditioner_l2}
  \mathscr{L}(u_\mathcal{N}) =  \| \mathcal{B}_h^{-1}\mathcal{R}_h ( \tilde{\pi}_h(u_\mathcal{N}))\|_{L^2(\Omega)},
\end{equation}
or 
  \[
  \mathscr{L}(u_\mathcal{N}) =  \| \mathcal{B}_h^{-1}\mathcal{R}_h ( \tilde{\pi}_h(u_\mathcal{N}))\|_{H^1(\Omega)}.
  \]
These cost functions are well-defined in the limit $h \downarrow 0$. In practice, one can replace $\mathcal{B}_h^{-1}$ by any spectrally equivalent approximation in order to reduce computational demands. For example, in the numerical experiments section, we consider several cycles of a \ac{gmg} preconditioner. 

\section{Analysis}\label{sec:num-an}
In this section, we first show that the proposed loss functions are differentiable. Next, we show that the interpolation of the \ac{nn} architecture can return any \ac{fe} function in a given \ac{fe} space. Combining these two results, we observe that there exists a global minimum of the \ac{feinn} problem in ~\eqref{eq:feinn_forward_loss} such that its interpolation is the solution of the \ac{fe} problem ~\eqref{eq:conv_diff_react_fe_weak_form}.

\begin{proposition}\label{prop:differentiable}
The loss function is differentiable for $\mathcal{C}^0$ activation functions.  
\end{proposition}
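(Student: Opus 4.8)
The plan is to exploit the composition structure of the loss in~\eqref{eq:feinn_forward_loss} and apply the chain rule, so that differentiability of $\mathscr{L}$ with respect to $\pmb{\theta}$ reduces to differentiability of each constituent map. The parameter-to-loss map $\pmb{\theta} \mapsto \mathscr{L}(\mathcal{N}(\pmb{\theta}))$ factors into four pieces: (i) the parameter-to-nodal-values map $\pmb{\theta} \mapsto (\mathcal{N}(\pmb{\theta})(\pmb{x}_i))_i$, where $\{\pmb{x}_i\}$ are the Lagrangian nodes defining the \acp{dof} of $\tilde{U}_h$; (ii) the interpolation $(\mathcal{N}(\pmb{\theta})(\pmb{x}_i))_i \mapsto \tilde{\pi}_h(\mathcal{N}(\pmb{\theta}))$; (iii) the residual map $\tilde{u}_h \mapsto \mathbf{r}_h(\tilde{u}_h)$; and (iv) the norm $\mathbf{r}_h \mapsto \|\mathbf{r}_h\|_Y$. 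Differentiability of the composition follows once each factor is differentiable.

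First I would dispatch the three ``linear-algebra'' maps. Map (ii) is linear, since the \acp{dof} are point evaluations and $\tilde{\pi}_h(w) = \sum_i w(\pmb{x}_i)\,\phi_i$ with a fixed nodal basis $\{\phi_i\}$ of $\tilde{U}_h$. Map (iii) is affine, because $\mathcal{R}(\tilde{u}) = \ell(\cdot) - a(\tilde{u}+\bar{u},\cdot)$ is affine in $\tilde{u}$ and its $V_h'$-representation $[\mathbf{r}_h]_j = \mathcal{R}(\cdot)(\varphi^j)$ is a fixed linear evaluation against the basis of $V_h$; the preconditioned variants only insert the fixed linear operators $\mathcal{M}_h^{-1}$ or $\mathcal{B}_h^{-1}$, which leaves this conclusion unchanged. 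Affine maps are smooth, so (ii) and (iii) pose no obstruction. For (iv), after the isomorphism $V_h' \cong \mathbb{R}^N$, the $\ell^2$, $L^2$ and $H^1$ norms are all square roots of a fixed positive (semi)definite quadratic form in the coefficient vector; these are differentiable everywhere except at the origin, while their squares are differentiable everywhere. I would therefore work with the squared loss, or restrict to the region where the residual is nonzero (the only regime relevant during training), to make (iv) globally smooth.

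The crux is map (i), and here the interpolated formulation pays off: because the \acp{dof} are point evaluations, $\mathscr{L}$ only ever sees the values $\mathcal{N}(\pmb{\theta})(\pmb{x}_i)$ and never any spatial derivative of the network. Consequently no spatial smoothness of $\rho$ is required, in sharp contrast to strong-form \acp{pinn}, whose loss needs $\rho \in \mathcal{C}^2$ for a second-order \ac{pde}. For fixed $\pmb{x}_i$, the map $\pmb{\theta} \mapsto \mathcal{N}(\pmb{\theta})(\pmb{x}_i)$ is a finite composition of the affine layer maps $\pmb{\Theta}_k$ (smooth in $\pmb{\theta}$ and in their inputs) and the componentwise activation $\rho$; I would establish differentiability by induction over the $L$ layers via the chain rule, with base case the affine map $\pmb{\Theta}_1$.

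The hard part is precisely the regularity of $\rho$ in this last step, and the $\mathcal{C}^0$ hypothesis must be read with care. If $\rho \in \mathcal{C}^1$ (e.g.\ $\tanh$), the induction closes verbatim and $\mathscr{L}$ is classically differentiable in $\pmb{\theta}$ wherever the residual is nonzero. For a merely $\mathcal{C}^0$, piecewise-$\mathcal{C}^1$ activation such as ReLU, $\rho$ fails to be differentiable only on a measure-zero set; since we evaluate $\rho$ and $\rho'$ at the finitely many pre-activations generated by the finitely many nodes and neurons, the composition is differentiable for all $\pmb{\theta}$ outside a measure-zero set, which is exactly what gradient-based training exploits through automatic differentiation. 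I would make this precise by arguing that the ``bad'' set of $\pmb{\theta}$ (where some pre-activation meets a non-differentiability point of $\rho$, or where the residual vanishes) is a finite union of lower-dimensional manifolds, so its complement is open, dense, and of full measure, and on it $\mathscr{L}$ is differentiable with gradient given by the usual backpropagation chain rule.
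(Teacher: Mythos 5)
Your proof follows essentially the same route as the paper's: the paper also factors $\mathrm{d}\mathscr{L}/\mathrm{d}\pmb{\theta}$ through the chain rule as (norm) $\circ$ (residual) $\circ$ (nodal values of the network), observes that the middle factor is the standard affine \ac{fe} residual map and that only pointwise evaluations of $\mathcal{N}(\pmb{\theta})$ — never its spatial derivatives — enter the loss, which is precisely the contrast with \acp{pinn} that you also draw. Where you go beyond the paper is in treating honestly the two points its proof glosses over: the paper asserts that the outer norm ``simply involves the squared root of a quadratic functional'' and that the parameter derivative of the nodal values ``is well-defined for $\mathcal{C}^0$ activation functions,'' whereas you correctly note that the square root fails to be differentiable exactly where the residual vanishes (i.e.\ at the global minimum identified in Prop.~\ref{prop:error-est}), and that mere continuity of $\rho$ does not give differentiability of $\pmb{\theta}\mapsto\mathcal{N}(\pmb{\theta})(\pmb{x}_i)$ everywhere — for ReLU one only obtains it off the measure-zero set of parameters where some pre-activation hits a kink. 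Your almost-everywhere/open-dense formulation is the correct precise statement of what the proposition can actually deliver under the stated hypothesis, so your version is a strict refinement of the paper's argument rather than a departure from it.
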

\begin{proof}
Using the chain rule, we observe that
\[
  \frac{\mathrm{d} \mathscr{L}}{\mathrm{d} \pmb{\theta}} = 
  \frac{\mathrm{d} \mathscr{L}}{\mathrm{d} \mathbf{r}_h} 
  \frac{\mathrm{d} \mathbf{r}_h}{\mathrm{d} \mathbf{u}_h} 
  \frac{\mathrm{d} \mathbf{u}_h}{\mathrm{d} \pmb{\theta}},
\]
for $\mathbf{u}_h$ being the \acp{dof} of the \ac{fe} space $U_h$. The first derivative in the \ac{rhs} simply involves the squared root of a quadratic functional. The second derivative is the standard Jacobian of the \ac{fe} problem. The third derivative is the vector of derivatives of the \ac{nn} at the nodes of $U_h$, which is well-defined for $\mathcal{C}^0$ activation functions. As a result, $\mathscr{L}$ is differentiable.
\end{proof} 

Consequently, one can use gradient-based minimisation techniques. We note that this is not the case when the \ac{nn} is evaluated without \ac{fe} interpolation. For instance, refer to~\cite{https://doi.org/10.48550/arxiv.2303.11617} for a simple example that shows ReLU activation functions cannot be used for \ac{pde} approximation using \acp{pinn} and related methods. In \acp{pinn}, one must compute $\pmb{\nabla}_{\pmb{\theta}} \pmb{\nabla}_{\pmb{x}} \mathcal{N}$, which poses additional smoothness requirements on the activation function. However, in the proposed methodology (as in \cite{BerroneIVPINN2022}), the spatial derivatives are computed by the interpolated function, not the \ac{nn}, and thus not affected by this constraint. For simplicity, we prove the result for the ReLU activation function. 

\begin{proposition}\label{prop:emulation}
  Let $U_h$ be a \ac{fe} space on a mesh $\mathcal{T}_h$ in $\mathbb{R}^d$ with \acp{dof} equal to $N \simeq h^{1/d}$. Let $\mathcal{N}$ be a neural network architecture with 3 layers, $(3 d N, d N, N)$ neurons per layer, and a ReLU activation function. For any $u_h \in U_h$, there exists a choice of the \ac{nn} parameters $\pmb{\theta}$ such that $\pi_h(u_\mathcal{N}) = u_h$.  
  \end{proposition}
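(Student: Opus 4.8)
The plan is to reduce the claim to a finite nodal-interpolation problem and then to construct an explicit ReLU network of the stated size that solves it. First I would invoke that, for a Lagrangian (nodal) space, the interpolant $\pi_h$ is entirely determined by the pointwise values at the $N$ Lagrangian nodes $\{x_i\}_{i=1}^N$: any two continuous functions agreeing at these nodes have the same interpolant. Consequently $\pi_h(u_\mathcal{N}) = u_h$ is equivalent to the $N$ scalar conditions $u_\mathcal{N}(x_i) = u_h(x_i) =: c_i$, $i = 1, \dots, N$. This reduction is the conceptual heart of the argument: it shows we never need $u_\mathcal{N}$ to coincide with $u_h$ away from the nodes, only to match finitely many nodal values, which is precisely why a ReLU network --- whose realisations are continuous piecewise-linear --- suffices even for $k_U > 1$, where $u_h$ is genuinely piecewise polynomial.

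Next I would build a localised bump per node. Because the nodes are finitely many and pairwise distinct, each $x_i$ is separated from the others by a positive distance, so there is an axis-aligned box $B_i \ni x_i$ that contains no other node. In the first layer I would form, for each node $i$ and each coordinate $1 \le m \le d$, a one-dimensional compactly supported hat $\phi_{i,m}$ in the variable $x_m$, equal to $1$ at $(x_i)_m$ and vanishing outside the $m$-th slab of $B_i$; each such hat is a fixed linear combination of three ReLU units, accounting for the $3dN$ neurons of the first layer. A subsequent layer then assembles one bump per node through a ReLU identity of the type $b_i = \max\bigl(0, \sum_{m=1}^d \phi_{i,m} - (d-1)\bigr)$, which equals $1$ at $x_i$ (all hats attain $1$) and exactly $0$ at every other node $x_j$ (at least one coordinate of $x_j$ falls outside $B_i$, so some hat vanishes and the argument of the outer ReLU is nonpositive); the intermediate width $dN$ leaves ample slack for this assembly. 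The output layer returns the $N$ bumps (equivalently, their weighted combination), and since $b_i(x_j) = \delta_{ij}$, the scalar network $u_\mathcal{N} = \sum_i c_i b_i$ satisfies $u_\mathcal{N}(x_j) = c_j$ for all $j$, which is exactly the reduced condition.

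The step I expect to be most delicate is guaranteeing the exact Kronecker-delta property $b_i(x_j) = \delta_{ij}$ for an arbitrary, possibly highly structured (e.g.\ Cartesian) node configuration, while staying within the prescribed shallow architecture. One cannot rely on a generic linear projection separating the nodes, since nodes may be collinear or share coordinate values; the box-separation argument circumvents this but forces the coordinatewise factor $d$ into the widths and requires choosing the slab widths below the minimal internode separation so that the bumps vanish \emph{exactly} (not merely approximately) at the remaining nodes. Verifying that this exact localisation can be realised with only the available ReLU layers, and bookkeeping the neuron counts against $3dN$, $dN$ and $N$, is the main technical obstacle; the remaining assembly and the final linear readout are routine.
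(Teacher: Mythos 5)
Your proposal is correct and follows essentially the same route as the paper's proof: reduce $\pi_h(u_\mathcal{N})=u_h$ to matching the $N$ nodal values, build a three-ReLU one-dimensional hat per node and coordinate (the $3dN$ first-layer units), combine them into a Kronecker-delta bump via $\rho\bigl(\sum_{m=1}^{d}\phi_{i,m}-(d-1)\bigr)$ using the separating boxes, and finish with a linear readout weighted by the nodal values. The only differences are cosmetic (you make the nodal-interpolation reduction explicit up front and shift some of the layer bookkeeping), so there is nothing to correct.
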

  
  \begin{proof}
  
  At each node $\pmb{n} \in \mathcal{T}_h$ , one can define a box $\mathcal{B}_{\pmb{n}}$ centred at $\pmb{n}$ that only contains this node of the mesh. Let us consider first the 1D case. 
  For ReLU activation functions, one can readily define a hat function with support in $[0,1]$ as follows. First, we consider 
  \[
  f_1(x) = 2x, \quad f_2(x) = 4x-2, \quad f_3(x) = 2x-2. 
  \]  
  One can check that 
  $f = \rho(f_1) - \rho(f_2) + \rho(f_3)$ is a hat function with value 1 at $x = 1/2$ and support in $[0,1]$. One can readily consider a scaling and translation to get $\mathrm{supp}(f) \subset \mathcal{B}_{\pmb{n}}$. This way, assuming one has $3 N$ neurons in the first layer and $N$ neurons in the second layer, one can emulate the 1D \ac{fe} basis in the second layer.
  
  In 2D, one can create the 1D functions for both $x$ and $y$ directions. It requires $6N$ neurons in the first layer and $2N$ neurons in the second layer. Thus, for each node, we have two hat functions, namely $b_1$ and $b_2$, that depend on $x$ and $y$, respectively. Now, in a third layer with $N$ neurons, we can compute $\rho(b_1 + b_2 -1)$ at each node. We can generalise this construction to an arbitrary dimension $d$ . We need $3dN$ neurons in the first layer to create the 1D functions in all directions. The hat functions are created in a second layer with $dN$ neurons. The final functions are combined as $\rho(\sum_{i=1}^{d}  b_i - d + 1)$. We note that, by the construction of $b_i$, these functions have value one in the corresponding node and their support is contained in the corresponding box.
  
  In the last layer, we end up with a set of functions $\psi_i$  that are equal to 1 on one node and zero on the rest. Besides, the \ac{fe} function can also be expressed as $u_h = \sum_{i=1}^{N} u^i \varphi^i(\pmb{x})$ and $\pi_h(u_\mathcal{N}) = \sum_{i=1}^{N} u_\mathcal{N}(\pmb{n}_i) \varphi^i(\pmb{x})$. Linearly combining the last layer functions with the \ac{dof} values $\{u^i\}_{i=1}^{N}$ we construct a \ac{nn} realisation that proves the proposition.
  \end{proof}
 
  \begin{remark}
For other activation functions like tanh or sigmoid, it is not possible to construct localised functions with compact support as in the proof above. However, one can consider a piecewise polynomial approximation of these activation functions (e.g., using B-splines) with this property \cite{Sunat2006}. Then, one can use a similar construction as in ReLU.
  \end{remark}
  
  We note that this construction can be further optimised by exploiting the structure of the underlying \ac{fe} mesh $\mathcal{T}_h$. For instance, for a structured mesh of a square with $n$ parts per direction ($N = n^d$), only $3n$ neurons are needed. We can exploit the fact that many nodes share the same coordinates in some directions. For the same reason, only $dn$ neurons are required in the second layer. On the other hand, for more than 3 layers, the computations can be arranged among neurons/layers in different ways. For simplicity, in the proposition, we consider a worst-case scenario situation (no nodes share coordinate components and we only consider the arrangement in the proposition statement).
  
  \begin{proposition}\label{prop:error-est}
    Let us assume that the \ac{fe} problem ~\eqref{eq:conv_diff_react_fe_weak_form} is well-posed and admits a unique solution $\tilde{u}_h$. The \ac{feinn} problem ~\eqref{eq:feinn_forward_loss} admits a global minimiser $u_\mathcal{N}$ such that $\tilde{\pi}_h(u_\mathcal{N}) = \tilde{u}_h$. 
  \end{proposition}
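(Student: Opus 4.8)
The plan is to establish that the global minimum value of the loss $\mathscr{L}$ equals zero, that it is attained, and that the attaining realisations are exactly those whose trace-free interpolant recovers $\tilde{u}_h$. First I would record the trivial lower bound: since $\mathscr{L}(w_\mathcal{N}) = \| \mathcal{R}_h(\tilde{\pi}_h(w_\mathcal{N})) \|_Y \geq 0$ for every realisation $w_\mathcal{N} \in \mathcal{N}$ (each candidate $\|\cdot\|_Y$ in Sec.~\ref{subsec:forward_loss} being a genuine norm), it suffices to exhibit a single realisation achieving the value $0$ in order to locate a global minimiser.

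Second, I would check that the finite element solution produces a vanishing residual. Testing $\mathcal{R}_h(\tilde{u}_h)$ against an arbitrary $v_h \in V_h$ and using definition \eqref{eq:conv_diff_react_weak_residual} gives $\ell(v_h) - a(\tilde{u}_h + \bar{u}_h, v_h) = \ell(v_h) - a(\bar{u}_h,v_h) - a(\tilde{u}_h,v_h)$, which is zero precisely by the defining relation \eqref{eq:conv_diff_react_fe_weak_form}. Hence $\mathcal{R}_h(\tilde{u}_h) = 0$ in $V_h'$ and $\|\mathcal{R}_h(\tilde{u}_h)\|_Y = 0$ for any of the residual norms. Next I would invoke the emulation result, Proposition~\ref{prop:emulation}, applied to $\tilde{u}_h$ regarded as an element of $U_h$ (it lies in $\tilde{U}_h \subset U_h$): this yields parameters $\pmb{\theta}$ and a realisation $u_\mathcal{N}$ with $\pi_h(u_\mathcal{N}) = \tilde{u}_h$. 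Because $\tilde{u}_h$ has vanishing degrees of freedom on $\Gamma_D$, the nodal values of $u_\mathcal{N}$ at the interior Lagrangian nodes coincide with the degrees of freedom of $\tilde{u}_h$, so that $\tilde{\pi}_h(u_\mathcal{N}) = \tilde{u}_h$ as well. Combining with the previous step, $\mathscr{L}(u_\mathcal{N}) = \|\mathcal{R}_h(\tilde{u}_h)\|_Y = 0$, so $u_\mathcal{N}$ is a global minimiser with the asserted interpolation property.

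As a clean converse I would note that the same computation characterises every minimiser: if $w_\mathcal{N}$ is any global minimiser then $\mathscr{L}(w_\mathcal{N}) = 0$, and positive-definiteness of $\|\cdot\|_Y$ forces $\mathcal{R}_h(\tilde{\pi}_h(w_\mathcal{N})) = 0$, i.e.\ $\tilde{\pi}_h(w_\mathcal{N})$ solves \eqref{eq:conv_diff_react_fe_weak_form}; the assumed well-posedness (uniqueness of $\tilde{u}_h$) then gives $\tilde{\pi}_h(w_\mathcal{N}) = \tilde{u}_h$. The only points requiring genuine care are the bookkeeping between the full interpolant $\pi_h$ appearing in Proposition~\ref{prop:emulation} and the trace-free interpolant $\tilde{\pi}_h$ entering the loss \eqref{eq:feinn_forward_loss}, and verifying that each candidate residual norm is positive-definite (so that a null loss indeed forces a null residual, via invertibility of the mass matrix and of the discrete Riesz map $\mathcal{B}_h$ for the respective choices). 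Neither is a serious obstacle, so the argument reduces to assembling the two preceding propositions together with the exactness of the Galerkin/Petrov--Galerkin residual at $\tilde{u}_h$.
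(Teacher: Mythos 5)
Your proposal is correct and follows essentially the same route as the paper's proof: nonnegativity of the loss, vanishing of $\mathcal{R}_h(\tilde{u}_h)$ by the definition of the \ac{fe} problem, and Proposition~\ref{prop:emulation} to realise $\tilde{u}_h$ as an interpolated \ac{nn}. The extra care you take with the $\pi_h$ versus $\tilde{\pi}_h$ bookkeeping and the converse characterising \emph{all} global minimisers are correct refinements beyond what the paper states, but they do not change the underlying argument.
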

  
  \begin{proof}
    First, we note that the loss function differentiable (by Prop.~\ref{prop:differentiable}) and positive. Besides, from the statement of the problem and Prop.~\ref{prop:emulation}, one can readily check that there exists a $u_\mathcal{N}$ such that $\mathcal{R}_h(\tilde{\pi}_h(u_\mathcal{N})) = \mathcal{R}_h(\tilde{u}_h) = 0$ and thus $\mathscr{L}(u_\mathcal{N}) = 0$, i.e. $u_\mathcal{N}$ is a global minimum of the cost function.           
  \end{proof}

As a result, the \ac{feinn} method can exhibit optimal convergence rates (the ones of \ac{fem}), provided the \ac{nn} is expressive enough compared to the \ac{fe} space. In Sec.~\ref{subsec:forward_exp}, we experimentally observe this behaviour. This analysis is different from the one in \cite{BerroneIVPINN2022}, which, using a completely different approach, proves sub-optimal results in a different setting. The numerical experiments in \cite{BerroneIVPINN2022} and in Sec.~\ref{subsec:forward_exp} show that \acp{ivpinn} can also recover optimal convergence rates. In fact, the results above can straightforwardly be extended to \acp{ivpinn}. The sub-optimality in \cite{BerroneIVPINN2022} is related to the choice of the residual norm, the $\ell^2$ norm of the residual vector. Sharper estimates could likely be obtained with the new residual norms suggested in Sec.~\ref{subsec:forward_loss}. 
\section{Inverse problem discretisation using neural networks}\label{sec:method_inverse}

In this section, we consider a \ac{pde}-constrained inverse problem that combines observations of the state variable $u$ and a partially known model ~\eqref{eq:conv_diff_react_strong_form}. Let us represent with $\pmb{\Lambda}$ the collection of unknown model parameters. It can include the physical coefficients, forcing terms and Dirichlet and Neumann boundary values. We parametrise $\pmb{\Lambda}$ with one or several \acp{nn}, e.g., as the ones proposed for the state variable in Sec.~\ref{subsec:feinns}, which will be represented with $\pmb{\Lambda}_\mathcal{N}$. Again, $n_0=d$, while $n_L$ depends on whether the unknown model parameter of the specific problem is a scalar-valued ($n_L=1$), vector-valued ($n_L=d$) or tensor-valued ~($n_L=d^2$) field. 

Let us denote with $\mathcal{R}(\pmb{\Lambda},u)$ the \ac{pde} residual in ~\eqref{eq:conv_diff_react_weak_residual}, where we make explicit its dependence with respect to the unknown model parameters (idem for $\mathcal{R}_h$). For integration purposes, we consider the interpolation of the model parameters onto \ac{fe} spaces, which we represent with $\pmb{\pi}_h(\pmb{\Lambda}_\mathcal{N})$. The discrete model parameter \ac{fe} spaces can in general be different to $U_h$ (just as their infinite-dimensional counterpart spaces might be different to $U$) and do not require imposition of boundary conditions. Besides, the interpolation can be restricted to different boundary regions for Dirichlet and Neumann values. 
If we consider a discontinuous nodal \ac{fe} space with nodes on the quadrature points of the Gaussian quadrature being used for integration (as in the numerical experiments), the interpolated and non-interpolated methods are equivalent. Thus, the interpolant simply accounts for the integration error being committed when integrating the \acp{nn} for the unknown model parameters.

Let us consider a measurement operator $\mathcal{D}: U \rightarrow \mathbb{R}^{M}$ and the corresponding vector of observations $\mathbf{d} \in \mathbb{R}^M$. The loss function for the inverse problem must contain the standard data misfit term and a term that accounts for the PDE residual. The method is understood as a (PDE-)constrained minimisation problem. As a result, the PDE residual is weighted by a (dynamically adapted) penalty coefficient.  We consider the loss functional:
\begin{equation} \label{eq:inverse_loss}
  \mathscr{L}(\pmb{\Lambda},u) \doteq \|\mathbf{d} - \mathcal{D}(u)\|_{\ell^2} + \alpha \|\mathcal{R}_h(\pmb{\pi}_h(\pmb{\Lambda}),\tilde{\pi}_h(u))\|_Y, 
\end{equation} 
for any of the choices of the residual norm discussed above and $\alpha \in \mathbb{R}^+$ is a penalty coefficient for the weak imposition of the \ac{pde} constraint. The inverse problem reads:
\begin{equation} \label{eq:inverse_min}
  u_\mathcal{N}, \pmb{\Lambda}_\mathcal{N} \in \underset{w_\mathcal{N}, \pmb{\Xi}_\mathcal{N} \in \mathcal{N}_u \times \mathcal{N}_{\pmb{\Lambda}}}{\mathrm{arg \, min}}  \mathscr{L}( \pmb{\Xi}_\mathcal{N} ,  w_\mathcal{N}).
\end{equation}

We refer to ~\cite{InversePenaltyMethod2015} for an application of penalty methods to inverse problems. However, their approach is more akin to the adjoint method, where they eliminate the state. We note that our approach is a \emph{one-loop} minimisation algorithm, i.e., one can minimise for both the state and unknown model parameters at the same time. This differs from adjoint methods, in which the loss function and the minimisation is in terms of $\pmb{\Lambda}$ only, but the state $u(\pmb{\Lambda})$ is constrained to be the solution of the (discrete) PDE at each iterate of $\pmb{\Lambda}$.

To alleviate the challenges associated with the training of the loss function described in ~\eqref{eq:inverse_min} and enhance the robustness of our method, we propose the following algorithm. The motivation behind its design is to exploit the excellent properties of \acp{nn} for data fitting. First, we train the state \ac{nn} with the observations. Next, we train the unknown model parameters \acp{nn} with the PDE residual, but freeze the state variable to the value obtained in the previous step. These steps are computationally lightweight because they do not involve differential operators in the training processes. These two initial steps are finally used as initialisation for the one-loop minimisation in ~\eqref{eq:inverse_min}. We summarise the algorithm below:

\begin{itemize}

\item Step 1 (Data fitting): Train the state neural network to fit the observed data, using standard \ac{nn} initialisation:
\[
u_\mathcal{N}^0 = \underset{w_\mathcal{N} \in \mathcal{N}_u }{\mathrm{arg \, min}} \|\mathbf{d} - \mathcal{D}(w_\mathcal{N})\|_{\ell^2}.
\] 

\item Step 2 (Unknown model parameters initialisation): Train the model parameter \acp{nn} with the PDE residual for the fixed state $u_\mathcal{N}^0$ computed in Step 1, using standard \ac{nn} initialisation: 
\[
  \pmb{\Lambda}_\mathcal{N}^0 = \underset{\pmb{\Xi}_\mathcal{N} \in \mathcal{N}_{\pmb{\Lambda}}}{\mathrm{arg \, min}} \|\mathcal{R}_h(\pmb{\pi}_h(\pmb{\Xi}_\mathcal{N}),\tilde{\pi}_h(u^0_\mathcal{N}))\|_Y.
\] 

\item Step 3 (Fully coupled minimisation): Train both the state and model parameter \acp{nn} the full loss function ~\eqref{eq:inverse_loss}, starting from $u_\mathcal{N}^0$ and $\pmb{\Lambda}_\mathcal{N}^0$. 
\end{itemize}

It is important to point out that the three-step training process is facilitated by the incorporation of \acp{nn}. We attempted to apply the same methodology using \ac{fe} functions directly, but the outcomes were unsatisfactory. This is attributed to the local support of \ac{fe} functions, which limits the adjustment of the values of the free nodes that are directly influenced by the observations. In contrast, \acp{nn} with their global support, allow for parameter tuning across the entire domain.

\section{Implementation} \label{sec:implementation}
We rewrite \eqref{eq:inverse_loss} in the following algebraic form 
\begin{equation} \label{eq:inverse_loss_discrete}
    \mathscr{L}(\pmb{\theta}_\lambda, \pmb{\theta}_u) = \norm{\mathbf{e}(\mathbf{u}_h(\pmb{\theta}_u))}_{\ell^2} + \alpha \norm{\mathbf{r}_h(\mathbf{u}_h(\pmb{\theta}_u), \pmb{\lambda}_h(\pmb{\theta}_\lambda))}_{\ell^1},
\end{equation} 
where $\mathbf{e} \doteq \mathbf{d} - \mathcal{D}_h \mathbf{u}_h$ is the data misfit error, $\mathbf{r}_h$ is the variational residual vector,  $\mathbf{u}_h$, $\boldsymbol{\lambda}_h$ are the vectors of \acp{dof} of $\tilde{\pi}_h(u_\mathcal{N}(\pmb{\theta}_u))$ and $\pmb{\pi}_h(\pmb{\Lambda}_{\mathcal{N}}(\pmb{\theta}_\mathcal{\lambda}))$ of the \ac{nn} realisations $u_\mathcal{N}(\pmb{\theta}_u)$ and $\pmb{\Lambda}_{\mathcal{N}}(\pmb{\theta}_\mathcal{\lambda})$ for the arrays of parameters $\pmb{\theta}_u$ and $\pmb{\theta}_\lambda$, respectively. We have chosen the $\ell^1$ residual norm in (\ref{eq:inverse_loss_discrete}) because it is the one we have used in the numerical tests for inverse problems in Sec.~\ref{sec:experiments}. However, the proposed implementation is general and can be easily adapted to other choices of residual norms proposed above.  

We describe below an implementation of \acp{feinn} using Julia packages, even though the proposed implementation is general. In Julia, we rely on the existing packages \texttt{Flux.jl} \cite{Flux2018,FluxOSS2018} for the neural network part and \texttt{Gridap.jl} \cite{Gridap2020,Gridap2022} for the FEM part. We employ \texttt{ChainRules.jl}~\cite{ChainRules} to automatically propagate user-defined rules across the code.

To minimise the loss function \eqref{eq:inverse_loss_discrete} with gradient-based training algorithms, these gradients are required:
\begin{equation*}
    \frac{\partial \mathscr{L}}{\partial \pmb{\theta}_u} = \left(\frac{\partial \mathscr{L}}{\partial \mathbf{r}_h} \frac{\partial \mathbf{r}_h}{\partial \mathbf{u}_h} + \frac{\partial \mathscr{L}}{\partial \mathbf{e}} \frac{\partial \mathbf{e}}{\partial \mathbf{u}_h}\right) \frac{\partial \mathbf{u}_h}{\partial \pmb{\theta}_u}, \qquad 
    \frac{\partial \mathscr{L}}{\partial \pmb{\theta}_\lambda} = \frac{\partial \mathscr{L}}{\partial \mathbf{r}_h} \frac{\partial \mathbf{r}_h}{\partial \pmb{\lambda}_h} \frac{\partial \pmb{\lambda}_h}{\partial \pmb{\theta}_\lambda}.
\end{equation*}
Existing chain rules in \texttt{ChainRules.jl} can readily handle $\partial \mathscr{L} / \partial \mathbf{r}_h$ and $\partial \mathscr{L} / \partial \mathbf{e}$. We need to define specific rules for the automatic differentiation of the following tasks:

\begin{itemize}
  \item The interpolation of a \ac{nn} onto a \ac{fe} space in  $\partial \mathbf{u}_h/\partial \pmb{\theta}_u$ and $\partial \pmb{\lambda}_h / \partial \pmb{\theta}_{\lambda}$;
  \item The computation of the \ac{fe} residual in $\partial \mathbf{r}_h/\partial \mathbf{u}_h$ and $\partial \mathbf{r}_h / \partial \pmb{\lambda}_h$;
  \item The measurement operator $\mathcal{D}$ on the \ac{fe} state in $\partial \mathbf{e}/\partial \mathbf{u}_h$.
\end{itemize}

It is important to highlight that we never explicitly construct the global Jacobian matrices in our implementation. To evaluate the gradient $\partial \mathscr{L} / \partial \pmb{\theta}_{\lambda}$, we utilise \texttt{Gridap.jl} to compute the Jacobian $\partial \mathbf{r}_h / \partial \pmb{\lambda}_h$ cell-wise (i.e., at each cell of $\mathcal{T}_h$ separately), and restrict the vector $\partial \mathscr{L} / \partial \mathbf{r}_h$ to each cell. By performing the \ac{vjp} within each cell for $\partial \mathscr{L} / \partial \mathbf{r}_h$ and $\partial \mathbf{r}_h / \partial \pmb{\lambda}_h$, we obtain the cell-wise vectors that can be assembled to form $\partial \mathscr{L} / \partial \pmb{\lambda}_h$. With the help of \texttt{Flux.jl}, we can calculate the gradient $\partial \mathscr{L} / \partial \pmb{\theta}_{\lambda}$ by performing the \ac{vjp} for $\partial \mathscr{L} / \partial \pmb{\lambda}_h$ and $\partial \pmb{\lambda}_h / \partial \pmb{\theta}_{\lambda}$, without explicitly constructing the Jacobian $\partial \pmb{\lambda}_h / \partial \pmb{\theta}_{\lambda}$. This cell-wise approach recasts most of the floating point operations required to compute the gradients in terms of dense matrix-vector products. This results in a reduction of the computational times and memory requirements.

The gradient $\partial \mathscr{L} / \partial \pmb{\theta}_u$ has two contributions, corresponding to the \ac{fe} residual and data misfit terms. The same process described above is applied to compute the former contribution. 
The contribution of the data misfit term involves the computation of $\partial \mathscr{L} / \partial \mathbf{e} \ \partial \mathbf{e}/\partial \mathbf{u}_h$, which has not been discussed so far. In our implementation, it also follows an efficient cell-wise approach. In particular, we identify those cells with at least one observation point and, for these cells, we evaluate the cell shape functions at the observation points. This is nothing but the restriction of $\partial \mathbf{e}/\partial \mathbf{u}_h$ to the observation points and \acp{dof} of the cell. We then restrict the vector $\partial \mathscr{L} / \partial \mathbf{e}$ to these cells, and compute the \ac{vjp} among these vector and Jacobian restrictions. Finally, we assemble the resulting cell-wise vector contributions to obtain the data misfit global contribution vector to the vector $\partial \mathscr{L} /\partial \mathbf{u}_h$.

Once all the rules for Jacobian computations are appropriately defined, \texttt{ChainRules.jl} seamlessly combine them, enabling smooth gradient computation during the training process.

Let us finish the section with a discussion about computational cost. In \acp{feinn} and \acp{ivpinn}, one computes the spatial derivatives in the residual on \ac{fe} functions in $\partial \mathbf{r}_h / \partial  \mathbf{u}_h$ and the derivatives of pointwise evaluations of the \ac{nn} with respect to parameters in $\partial \mathbf{u}_h / \partial \pmb{\theta}$ separately. The expression of the polynomial derivatives is straightforward and the parameter differentiation is the one required in standard data fitting (and thus, highly optimised in machine learning frameworks). On the contrary, in standard \acp{pinn} the residual is not evaluated with the projection $\mathbf{u}_h$ but the \ac{nn} itself. One must compute $\partial \mathbf{r}_h / \partial \pmb{\theta}$ directly. It involves \emph{nested} derivatives (in terms of parameters and input features) that are more expensive (and less common in data science).
\section{Numerical experiments} \label{sec:experiments}

\subsection{Forward problems} \label{subsec:forward_exp}
We use the standard $L^2$ and $H^1$ error norms to evaluate the precision of the 
approximation $u^{id}$  
for forward problems:
\begin{equation*}
    e_{L^2(\Omega)}(u^{id}) = \ltwonorm{u - u^{id}}, \qquad 
    e_{H^1(\Omega)}(u^{id}) = \honenorm{u - u^{id}},
\end{equation*}
where $u$ is the true state, $\ltwonorm{\cdot} = \sqrt{\int_\Omega |\cdot|^2}$, and $\honenorm{\cdot} = \sqrt{\int_\Omega |\cdot|^2 + |\pmb{\nabla} (\cdot)|^2}$. 
The integrals in these terms are evaluated with Gauss quadrature rule, and a sufficient number of quadrature points are used to guarantee accuracy. Note that, in the forward problem experiments, $u^{id}$ can either be a \ac{nn} 
or its interpolation onto a suitable \ac{fe} space. We will specify which representation is used explicitly when necessary. 

As for the experiments, we first compare \acp{feinn} with \acp{ivpinn} by solving the forward convection-diffusion-reaction problem~\eqref{eq:conv_diff_react_strong_form}. Next, we shift to the Poisson equation, i.e., problem~\eqref{eq:conv_diff_react_strong_form} with $\pmb{\beta} = \mathbf{0}$ and $\sigma = 0$, and analyse the impact of preconditioning on accelerating convergence during the training process. Finally, we showcase the effectiveness of \acp{feinn}  in complex geometries by solving a Poisson problem in a domain characterised by irregular shapes. It is worth noting that a comprehensive comparison 
in terms of computational cost and accuracy between \acp{ivpinn}, \acp{pinn}, and \acp{vpinn} has already been conducted in ~\cite{BerroneIVPINN2022}. In these experiments, the accuracy of \acp{ivpinn} is similar or better than the other \acp{pinn} being analysed for a given number of \ac{nn} evaluations. The computational cost of \acp{ivpinn} is reported to be lower than standard \ac{pinn} approaches, which is explained by the different cost of differentiation in each case, as explained in Sec.~\ref{sec:implementation}. As a result, we restrict ourselves to the comparison between \acp{feinn} and \acp{ivpinn} and refer the reader to ~\cite{BerroneIVPINN2022} for the relative merit of \acp{feinn} over \acp{pinn}.

In all the experiments in this section, we adopt the \ac{nn} architecture
in~\cite{BerroneIVPINN2022}, namely $L = 5$ layers, $n = 50$ neurons for each hidden layer, and $\rho = \tanh$ as activation function, so that we can readily compare these results with the ones in~\cite{BerroneIVPINN2022} for standard \acp{pinn}. Besides, this choice strikes a good balance between the finest \ac{fe} resolution being used and the \ac{nn} expressivity. Indeed, we have experimentally observed that increasing the expressiveness of the \ac{nn} (additional number of layers and/or neurons per layer) for the finest \ac{fe} mesh being used in our experiments does not noticeably improve the results.

In addition, we employ Petrov-Galerkin discretisations, i.e., we use a linearised test space $V_h$ as defined in Sec.~\ref{sec:fem-appr}. Unless otherwise specified, we adopt the $\ell^2$ norm in the loss function \eqref{eq:feinn_forward_loss}. 
In all the experiments in this section and Sec.~\ref{sec:inverse}, we use the Glorot uniform method~\cite{Glorot2010} for \ac{nn} parameter initialisation and the BFGS optimiser in \texttt{Optim.jl}~\cite{Optimjl2018}.\footnote{We have experimentally observed that L-BFGS is not as effective as BFGS for the problems considered in this paper.} 

\subsubsection{Convection-diffusion-reaction equation with a smooth solution}  \label{sec:advection_diffusion_eq_smooth}
We replicate most of the experiment settings in~\cite[Convergence test \#1]{BerroneIVPINN2022}, allowing the interested reader to check how other \acp{pinn} perform in similar experiments by looking at this reference. Specifically, the problem is defined on a square domain $\Omega = [0, 1]^2$, $\Gamma_D$ are spanned by the left and right sides, and $\Gamma_N$ by the the top and bottom ones.  We choose the following analytical functions for the model parameters:
\begin{equation*}
    \kappa(x, y) = 2 + \sin(x + 2y), \qquad 
    \pmb{\beta}(x, y) = \left[\sqrt{x - y^2 + 5},\ \sqrt{y - x^2 + 5}\right]^{\rm T}, \qquad 
    \sigma(x, y) = \rm{e}^{\frac{x}{2} - \frac{y}{3}} + 2,
\end{equation*}
and pick $f$, $g$ and $\eta$ such that the exact solution is: 
\begin{equation*}
    u(x, y) = \sin(3.2x(x-y))\cos(x+4.3y)+\sin(4.6(x+2y))\cos(2.6(y-2x)).
\end{equation*}
We discretise the domain using uniform meshes of quadrilateral elements of equal size.

It is crucial to emphasize that \acp{ivpinn} and \acp{feinn} share a fundamental idea at their core: the interpolation of \acp{nn} (or their product with a function for \acp{ivpinn}) onto a corresponding \ac{fe} space. The primary distinction lies in the approach used to impose the Dirichlet boundary condition. \acp{feinn} rely on the trial FE space to enforce the boundary condition, using an interpolation that enforces zero trace on $\Gamma_D$. \acp{ivpinn}, however, rely on an offset function $\bar{u}$ and a distance function $\Phi$, where $\bar{u}$ is as smooth as $u$ and satisfies the Dirichlet boundary condition and $\Phi \in \tilde{U}$. The authors propose in \cite{BerroneIVPINN2022} to train an auxiliary neural network or use data transfinite interpolation to compute the lifting $\bar{u}$ of the Dirichlet data $g$. So, the true state can be expressed as $\Phi \circ {u}_\mathcal{N} + \bar{u}$.  \acp{ivpinn}  interpolate this expression onto the \ac{fe} space.
The interpolated \ac{nn} composition now belongs to $\tilde{U}_h$ due to the property of $\Phi$, and the full expression (approximately) satisfies the Dirichlet boundary condition because of the existence of $\bar{u}$. The loss function of the method reads:
\[
u_\mathcal{N} \in \mathrm{arg} \ \underset{w_\mathcal{N}}{\mathrm{min}} \ \| \hat{\mathbf{r}}_h(\pi_h(\Phi \circ w_\mathcal{N} + \bar{u}))\|, \quad [\hat{\mathbf{r}}_h]_i \doteq \ell(\varphi^i) - a(\pi_h(\Phi \circ w_\mathcal{N} + \bar{u}),\varphi^i),
\] 
where $\{\varphi^i\}_{i=1}^N$ are the shape functions that span $V_h$.
In our numerical experiments, we have considered the training of an auxiliary \ac{nn} to approximate $\bar{u}$, but the results were not satisfactory. (Probably, because we are computing a function in $\Omega$ with data on $\Gamma_D$ only.) 
We have considered instead a discrete harmonic extension (i.e., a \ac{fe} approximation of the Poisson problem with $g$ on $\Gamma_D$) to approximate $\bar{u}$. 
Since we consider a trivial square domain, the distance function $\Phi$ can readily be defined as the product of the linear polynomials, i.e., $\Phi(x, y) = x(1-x)$; note that $\Gamma_D$ only includes the left and right sides of the squared domain.

In addition to evaluating the performance of \acp{feinn} and \acp{ivpinn}, we also examine how the \acp{nn} generalise. For \acp{ivpinn}, we compute the error of the \ac{nn} composition $\Phi \circ u_{\mathcal{N}} + \bar{u}$, while, in the case of \acp{feinn}, we compute the error of $u_{\mathcal{N}}$ directly. We emphasise that this setting aligns with the principles of \ac{nn} training: we train the \ac{nn} with data in a set of points (the nodes of the mesh), and if the training is effective, we expect the \ac{nn} to yield low error on the whole domain $\bar{\Omega}$.

For the first experiment, we investigate the impact of mesh refinement on the approximation error. Keeping $k_U = 6$ fixed, we discretise the domain using a uniform mesh of quadrilaterals with different levels of refinement. To account for the impact of \ac{nn} initialisation on both \acp{feinn} and \acp{ivpinn}, we run 10 experiments with different initialisations for each mesh resolution. Fig. \ref{fig:advection_diffusion_h_refinement_smooth_l2_err} and \ref{fig:advection_diffusion_h_refinement_smooth_h1_err} illustrate the $L^2$ errors and $H^1$ errors, respectively, for the different methods versus mesh size. The curves labelled as ``FEM'' refer to the errors associated to the \ac{fem}
solution, those labelled as ``\ac{feinn}'' and ``\ac{ivpinn}'' to the errors of the interpolated \acp{nn} resulting from either method, and, finally, the label tag ``(\ac{nn} only)'' is used to refer to the (generalisation) error associated to the \ac{nn} itself (i.e., not to its \ac{fe} interpolation).   Due to the negligible variance in the errors of the interpolated \acp{nn} for both \acp{feinn} and \acp{ivpinn}, we present the average error among those obtained for the 10 experiments. We also provide the slopes of the \ac{fem} convergence curves in Fig.~\ref{fig:advection_diffusion_h_refinement_smooth_l2_err} and \ref{fig:advection_diffusion_h_refinement_smooth_h1_err}. The computed slopes closely match the expected theoretical values, validating the \ac{fem} solution and the accuracy of the error computation. Based on the observations from Fig.~\ref{fig:advection_diffusion_h_refinement_smooth}, \acp{feinn} not only generalise better compared to \acp{ivpinn}, they also have the potential to outperform \ac{fem}. This capability of \acp{feinn} is not coincidental, as all errors associated to the \acp{nn} resulting from \acp{feinn}, consistently remain below the \ac{fem} convergence curve. Additionally, we observe that as the mesh becomes finer, \acp{ivpinn} starts to struggle. 
While more training iterations may reduce the errors of \acp{ivpinn}, it is worth noting that the number of training iterations reaches the prescribed limit of 30,000 for the three finest mesh resolutions.   
It is also interesting to compare the distribution of errors among the \acp{nn} resulting from \acp{ivpinn} and \acp{feinn}. We observe a high sensitivity of the errors to \ac{nn} initialisation for \acp{ivpinn}, whereas the errors tend to cluster for \acp{feinn}. We also observe that the $L^2$ error of \ac{fem} gets closer to that of the non-interpolated \ac{nn} resulting from \ac{feinn} as the mesh is refined. This behaviour is expected, since the \ac{fe} mesh is being refined while the \ac{nn} architecture is fixed. There is a point in which the \ac{nn} is not expressive enough to represent the optimal \ac{fe} solution and thus, Prop.~\ref{prop:error-est} does not hold any more.

\begin{figure}
    \centering
    \begin{subfigure}[t]{0.48\textwidth}
        \includegraphics[width=\textwidth]{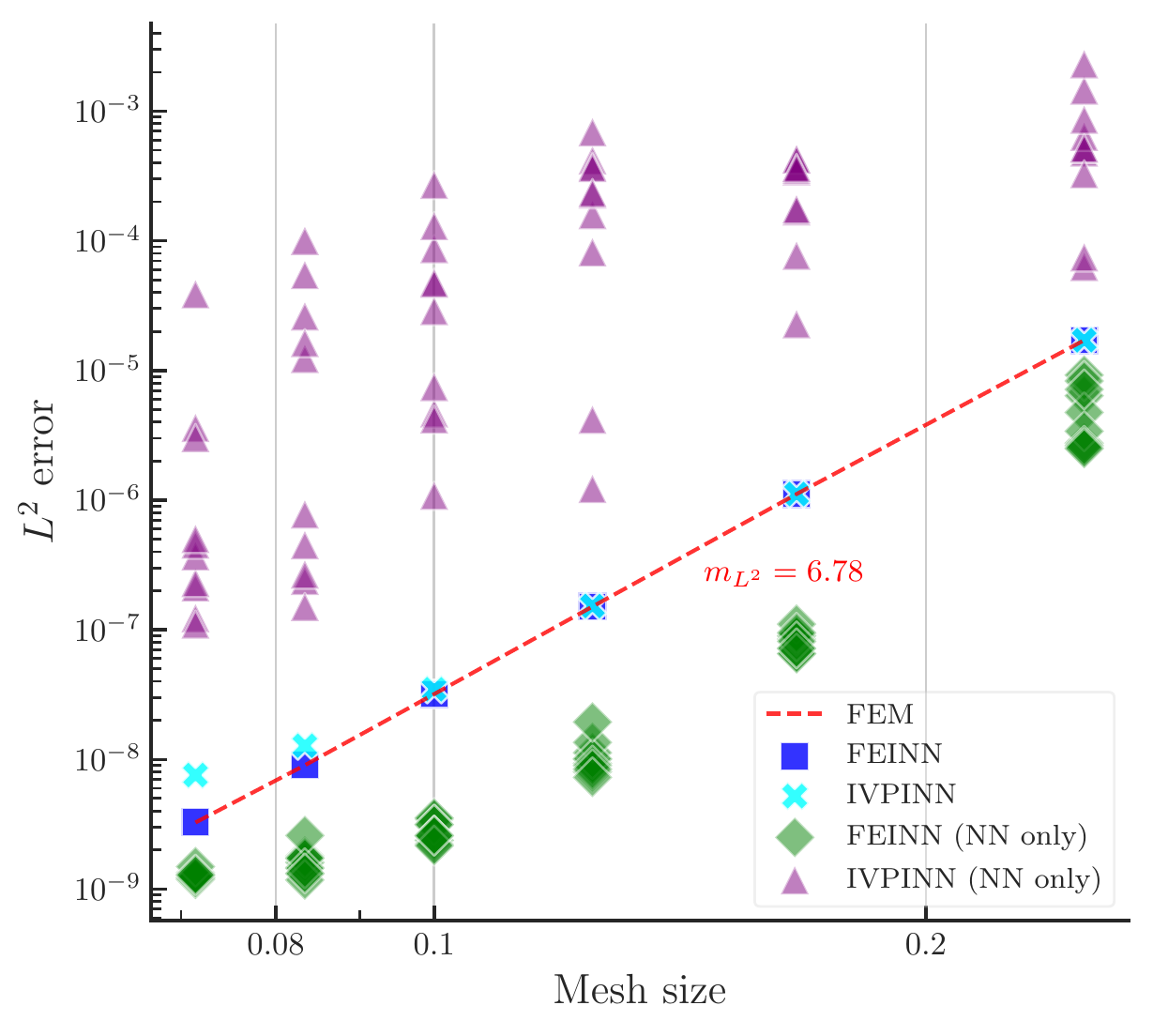}
        \caption{}
        \label{fig:advection_diffusion_h_refinement_smooth_l2_err}
    \end{subfigure}
    \begin{subfigure}[t]{0.48\textwidth}
        \includegraphics[width=\textwidth]{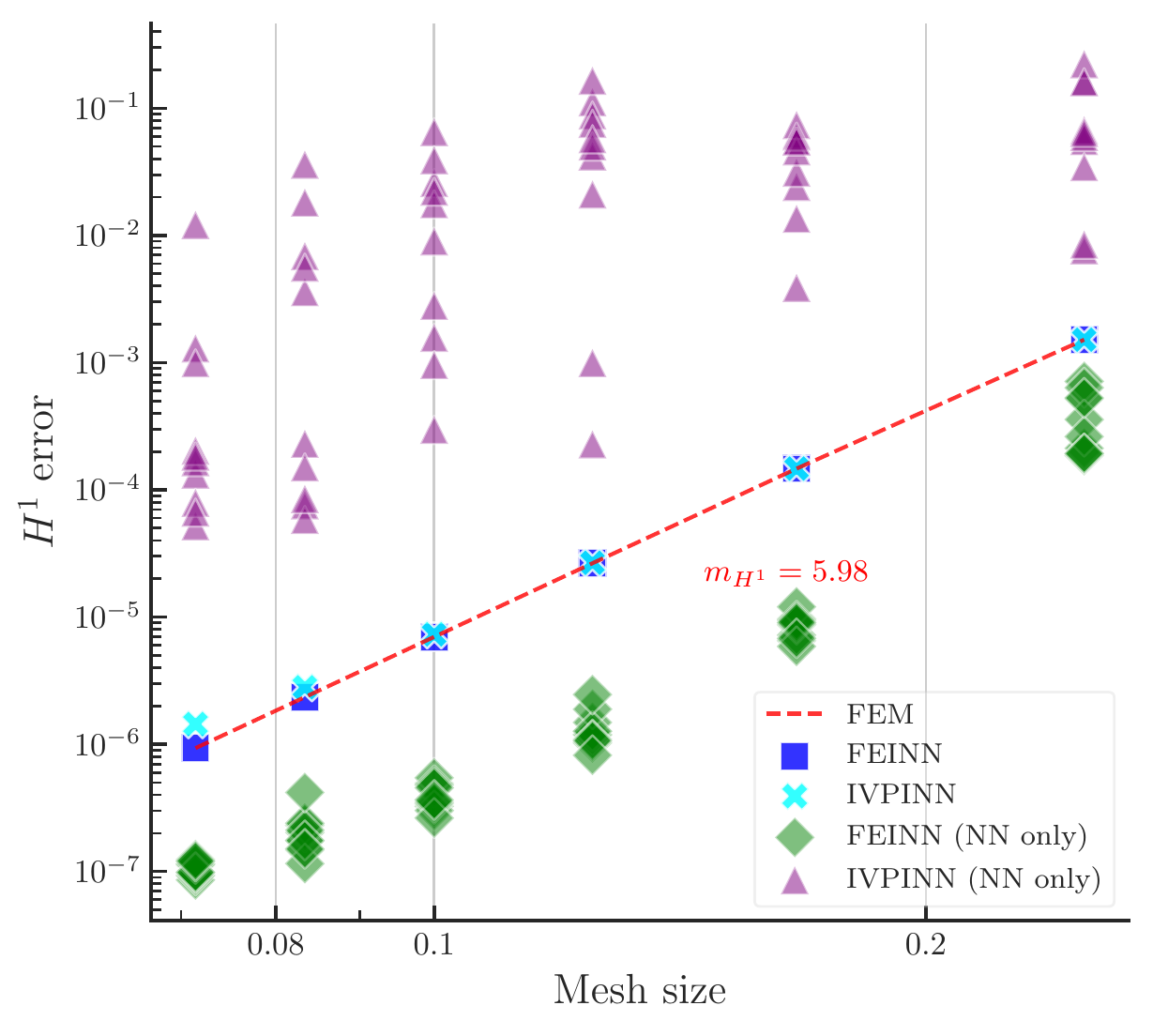}
        \caption{}
        \label{fig:advection_diffusion_h_refinement_smooth_h1_err}
    \end{subfigure}
     
    \caption{Convergence of errors with respect to the mesh size of the trial space for the forward convection-diffusion-reaction problem with a smooth solution.}
    \label{fig:advection_diffusion_h_refinement_smooth}
\end{figure}

Since $u \in \mathcal{C}^\infty(\bar{\Omega})$ in this problem, similar to \ac{fem}, we can also explore at which rate the error decays as we increase the polynomial order of the trial space (i.e., the \ac{nn} interpolation space). We maintain a fixed mesh consisting of $15\times15$ quadrilaterals, and increase $k_U$ from $1$ up to $6$. We perform 10 experiments for each order, with a different \ac{nn} initialisation for each experiment. Fig.~\ref{fig:advection_diffusion_p_refinement_smooth_l2_err} and \ref{fig:advection_diffusion_p_refinement_smooth_h1_err} depict the $L^2$ and $H^1$ errors, respectively, against $k_U$. Once again, we observe that \acp{feinn} have comparable performance to \ac{fem}, and more importantly, the non-interpolated \acp{nn} resulting from \acp{feinn} have lower errors than \ac{fem}. In some cases, these can outperform \ac{fem} by more than two orders of magnitude. On the same mesh, the \ac{nn} obtained with \acp{feinn} is comparable to the \ac{fe} solution obtained using between one and two orders more.   
Overall, \acp{ivpinn} demonstrate a comparable level of performance to \ac{fem}, with the exception occurring at $k_U = 6$. After 30,000 training iterations, it fails to achieve the same performance as \ac{fem}. Notably, the non-interpolated \ac{nn} compositions from \acp{ivpinn} yield satisfactory results at lower orders, but as the order increases, they fail to reach the accuracy of \ac{fem}. The same comment about the expressivity limit of the \ac{nn} architecture applies here. As we increase the order, the improvement of \ac{feinn} becomes less pronounced, since we are keeping fix the \ac{nn} architecture.

\begin{figure}
    \centering
    \begin{subfigure}[t]{0.48\textwidth}
        \includegraphics[width=\textwidth]{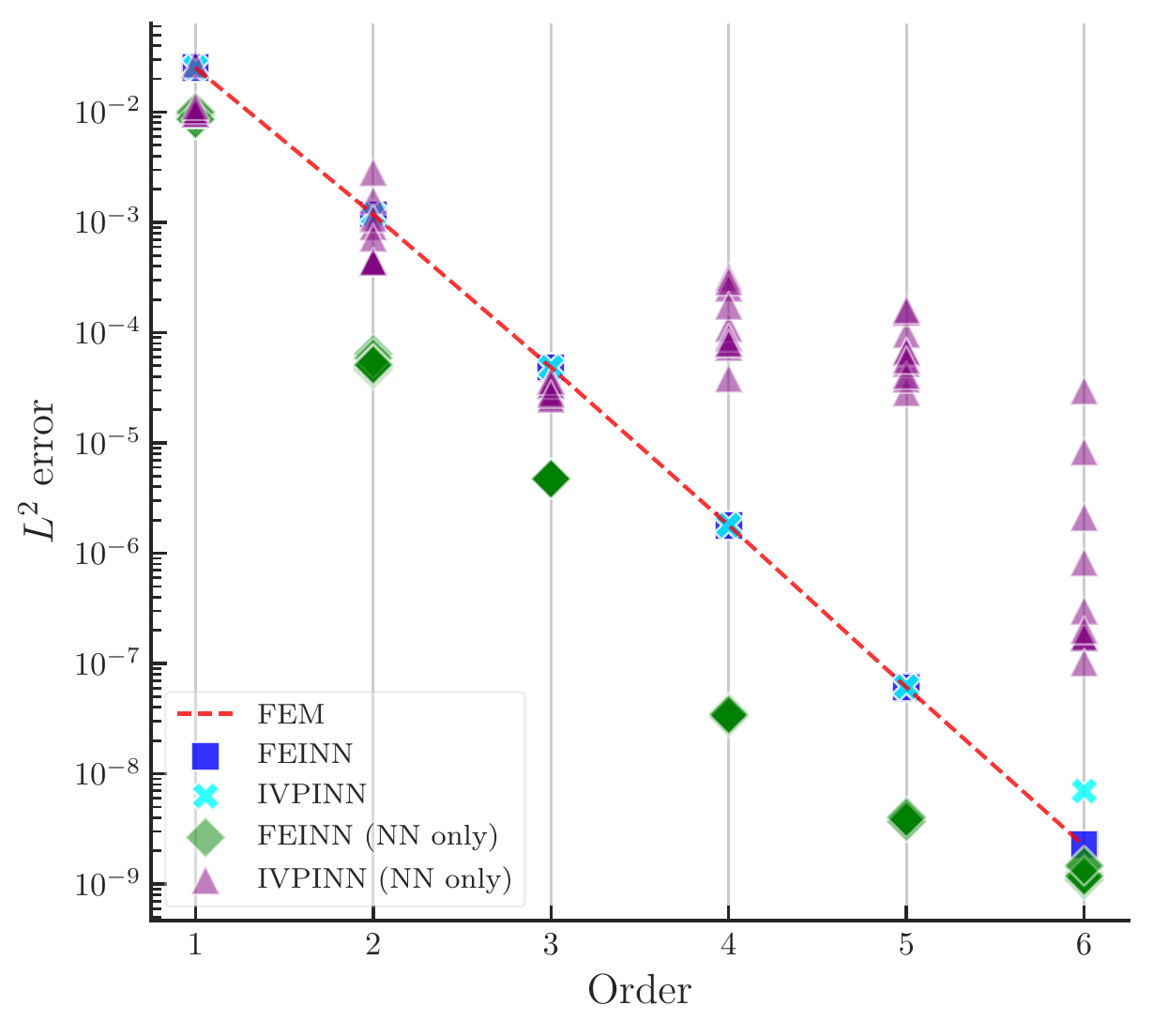}
        \caption{}
        \label{fig:advection_diffusion_p_refinement_smooth_l2_err}
    \end{subfigure}
    \begin{subfigure}[t]{0.48\textwidth}
        \includegraphics[width=\textwidth]{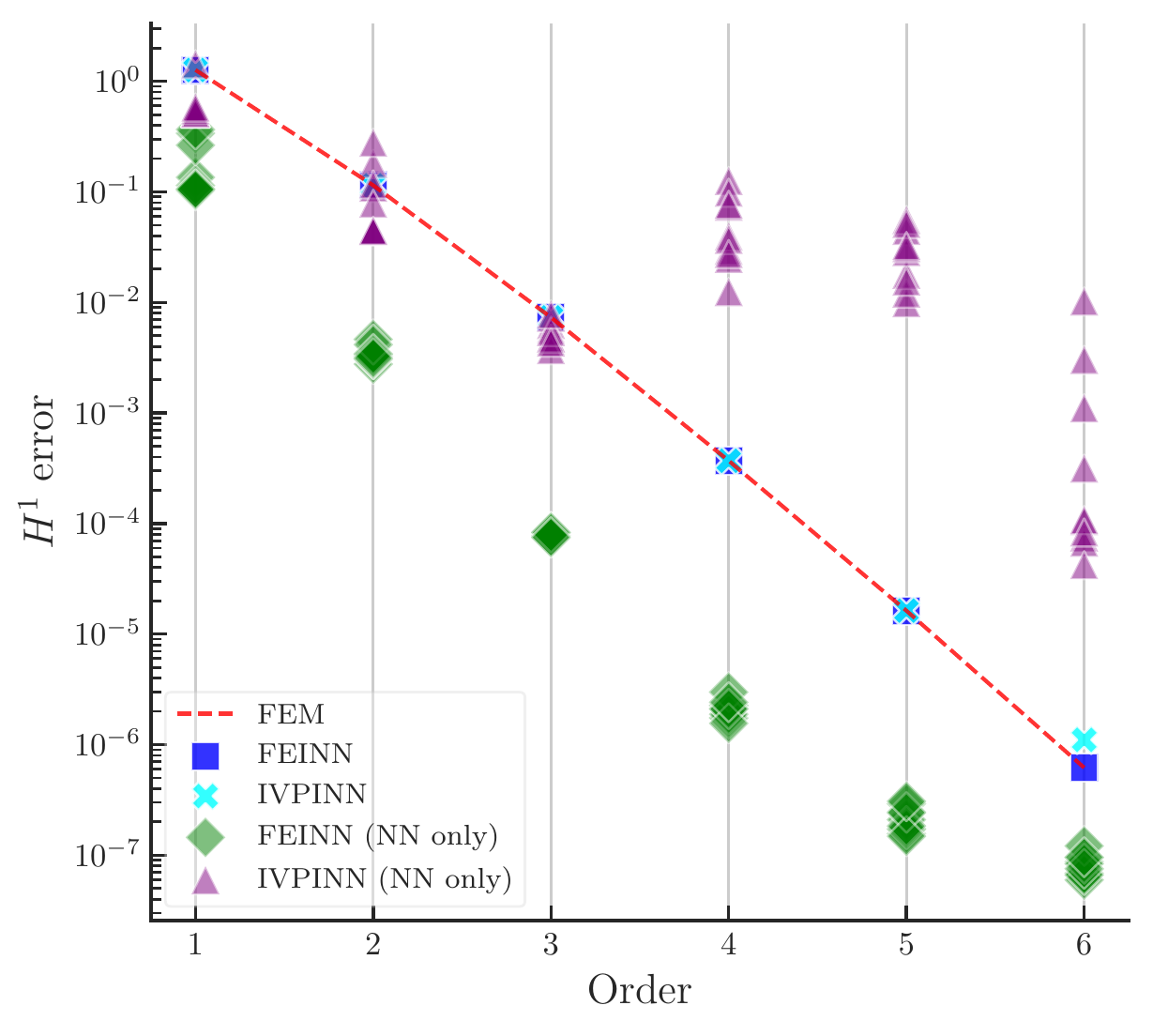}
        \caption{}
        \label{fig:advection_diffusion_p_refinement_smooth_h1_err}
    \end{subfigure}
     
    \caption{Convergence of errors with respect to the order of trial bases for the forward convection-diffusion-reaction problem with a smooth solution.}
\end{figure}

\begin{figure}
    \centering
    \begin{subfigure}[t]{0.48\textwidth}
        \includegraphics[width=\textwidth]{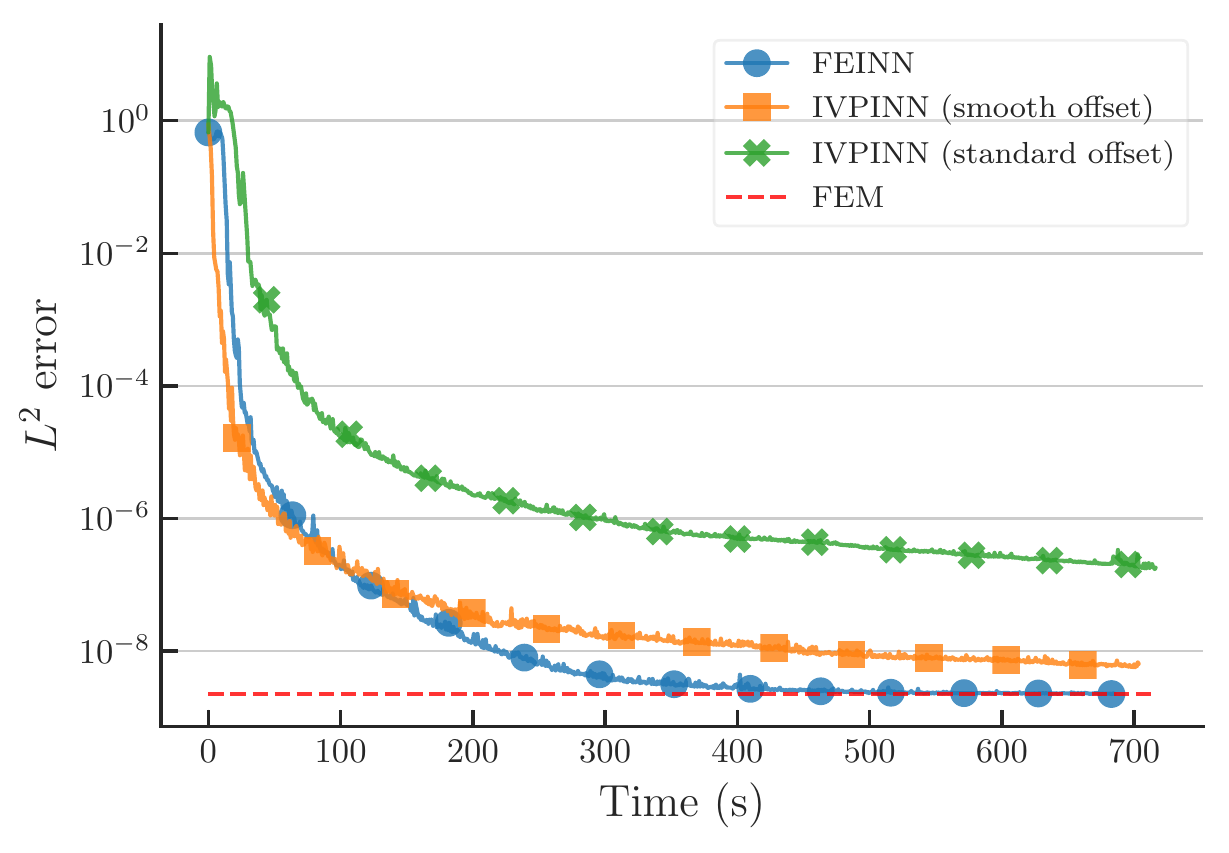}
        \caption{}
        \label{fig:advection_diffusion_smooth_l2err_vs_time}
    \end{subfigure}
    \begin{subfigure}[t]{0.48\textwidth}
        \includegraphics[width=\textwidth]{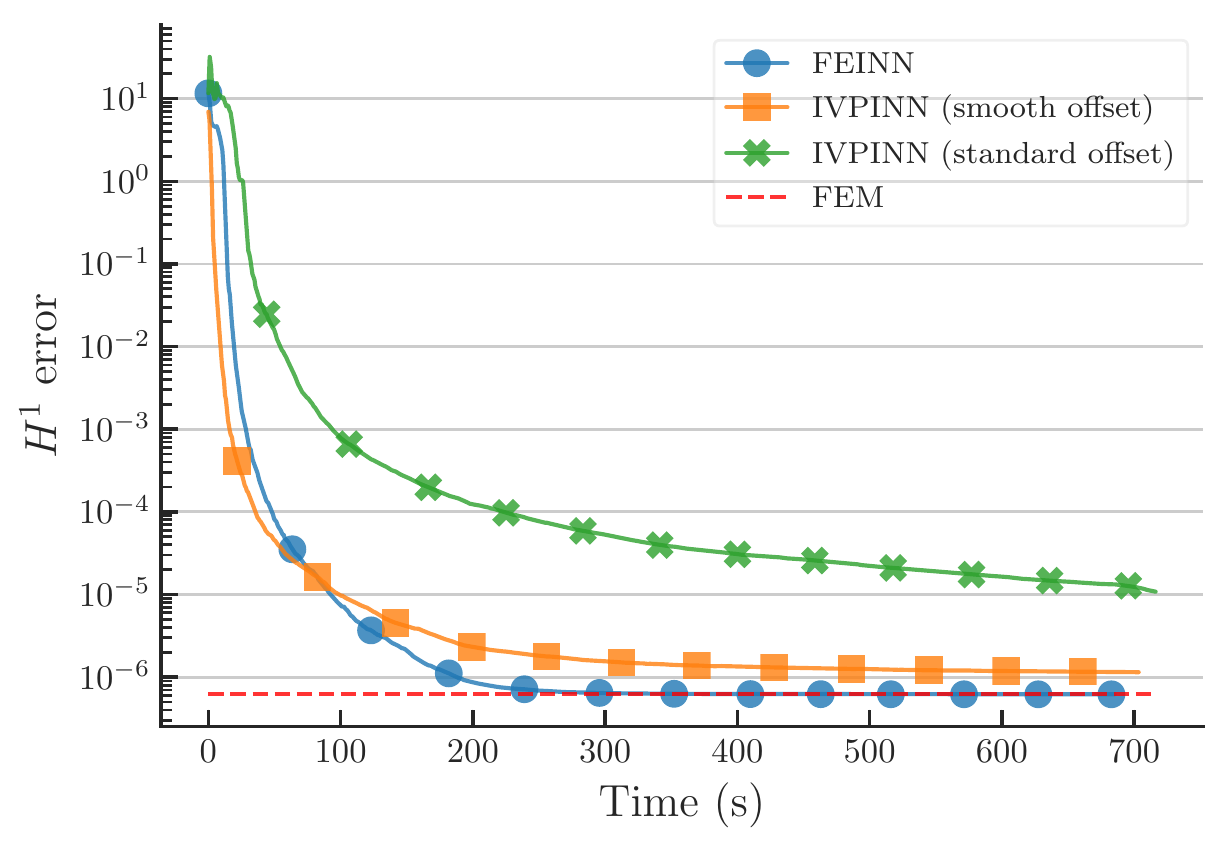}
        \caption{}
        \label{fig:advection_diffusion_smooth_h1err_vs_time}
    \end{subfigure}
    \caption{Comparison among \acp{feinn} and \acp{ivpinn} in terms of $L^2$ and $H^1$ errors versus computational time. Training was performed in both methods for a fixed number of 30,000 iterations.}
    \label{fig:advection_diffusion_smooth_errs_vs_time}
\end{figure}

In the sequel, we investigate how the computational cost and convergence rates of \acp{feinn} and \acp{ivpinn} compare. To this end, we solve the same problem so far in this section by training \acp{feinn} and \acp{ivpinn} for a fixed number of iterations, and then visualise at which rate the $L^2$ and $H^1$ errors decay with time.
This is reported in Fig.~\ref{fig:advection_diffusion_smooth_errs_vs_time}. 
We used $k_U = 6$, a mesh consisting of $15 \times 15$ quadrilaterals (resulting in a problem with 8,099 \acp{dof}), and a fixed number of 30,000 BFGS training iterations.
Besides, we consider two different choices of the offset function to study its impact on the performance of \acp{ivpinn}. In particular, the curve labelled ``(smooth offset)'' in Fig.~\ref{fig:advection_diffusion_smooth_errs_vs_time}  denotes the same offset function  used so far in this section (namely, a discrete harmonic extension, as per suggested in \cite{BerroneIVPINN2022}), while the one labelled ``(standard offset)'' denotes the offset function that one naturally uses in \ac{fem} (and we also use here with \acp{feinn}). We observe that both \acp{feinn} and \acp{ivpinn} have roughly the same computational cost per iteration, around 0.023 seconds per iteration on a GeForce RTX 3090 GPU. Moreover, as shown in Fig.~\ref{fig:advection_diffusion_p_refinement_smooth_l2_err} and ~\ref{fig:advection_diffusion_p_refinement_smooth_h1_err}, \acp{ivpinn} converge consistently slower than \acp{feinn}. Fig.~\ref{fig:advection_diffusion_smooth_errs_vs_time} also illustrates that the choice of offset function greatly influences the convergence rate of \acp{ivpinn}. Indeed, \ac{ivpinn} with the smooth offset function converges much faster than with the standard offset function, while for \acp{feinn}, we readily obtain a faster convergence rate without the need for a special offset function. It is also worth noting that the authors in~\cite{BerroneIVPINN2022} observe that \acp{ivpinn} are less computationally expensive than \acp{pinn} and \acp{vpinn}, and thus for this problem, which indicates that \acp{feinn} are also more efficient than the latter two methods.

\subsubsection{Convection-diffusion-reaction equation with a singular solution} \label{subsubsec:singular_cdr_eq_exp}
The second problem we solve is still~\eqref{eq:conv_diff_react_strong_form}, but with a singular solution. We adopt most of the settings in ~\cite[Convergence test \#2]{BerroneIVPINN2022}. The domain and boundaries are the same as those in Sec.~\ref{sec:advection_diffusion_eq_smooth}. The coefficients are $\kappa = 1$, $\pmb{\beta} = [2, 3]^{\rm T}$, and $\sigma = 4$. We pick $f$, $\eta$, and $g$ such that the true state is, in polar coordinates,
\begin{equation*}
    u(r, \theta) = r^{\frac{2}{3}}\sin(\frac{2}{3}(\theta + \frac{\pi}{2})).
\end{equation*}

\begin{figure}
    \centering
    \begin{subfigure}[t]{0.48\textwidth}
        \includegraphics[width=\textwidth]{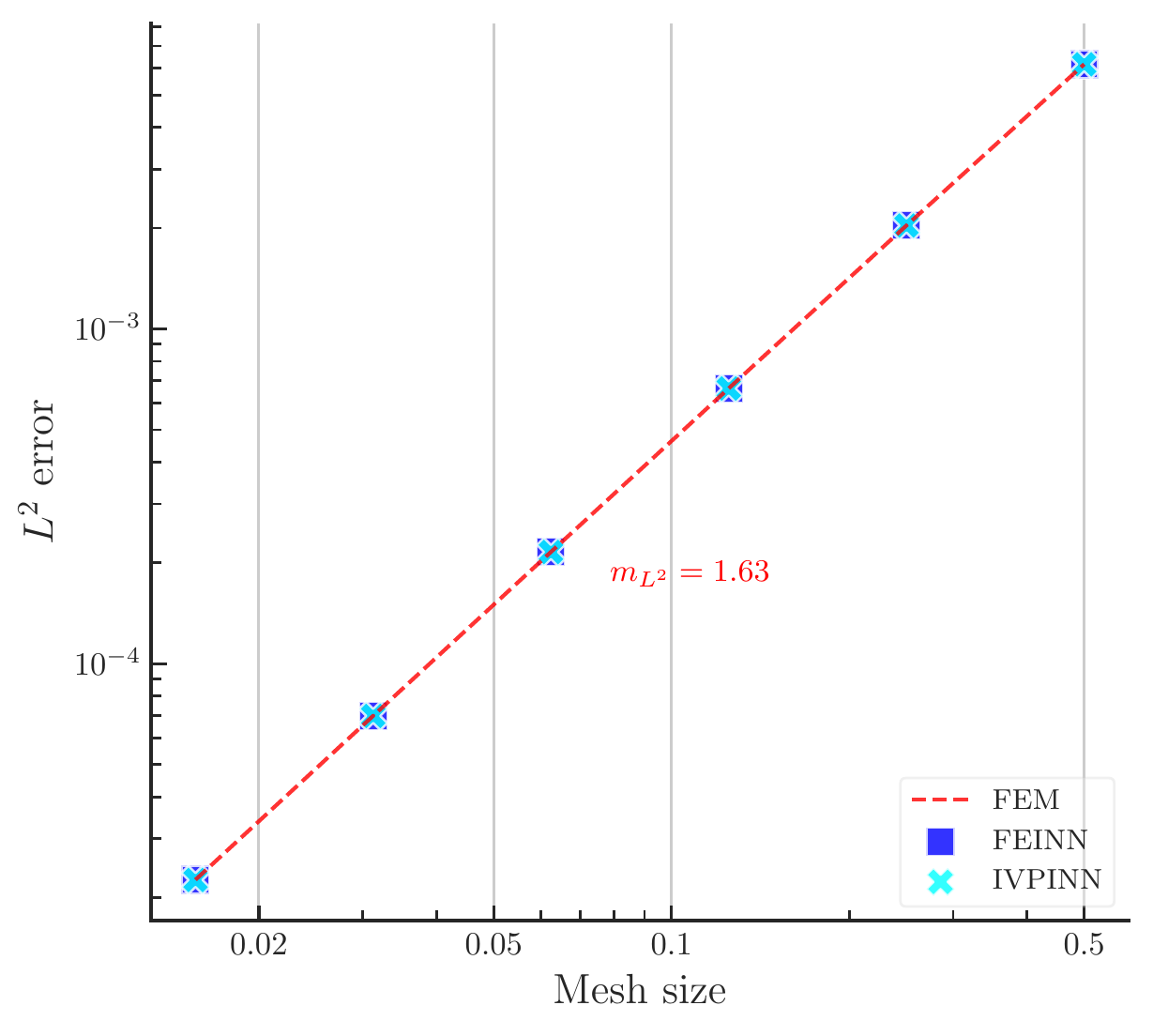}
        \caption{}
        \label{fig:advection_diffusion_h_refinement_less_smooth_l2_err}
    \end{subfigure}
    \begin{subfigure}[t]{0.48\textwidth}
        \includegraphics[width=\textwidth]{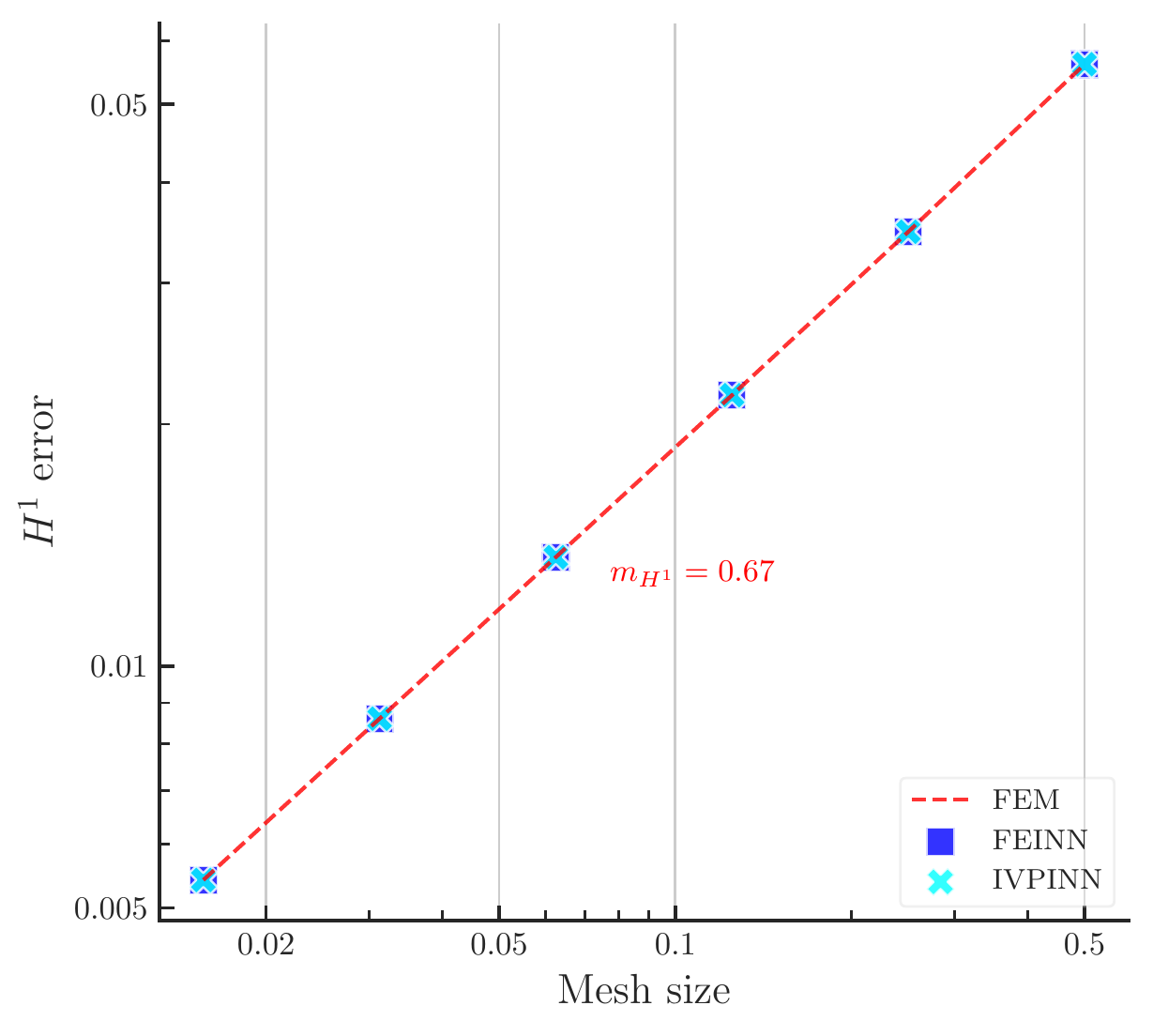}
        \caption{}
        \label{fig:advection_diffusion_h_refinement_less_smooth_h1_err}
    \end{subfigure}
     
    \caption{Convergence of errors with respect to the mesh size of the trial space for the forward convection-diffusion-reaction problem with a singular solution.}
    \label{fig:advection_diffusion_h_refinement_singular}
\end{figure}

Since $u \in H^{5/3 - \epsilon}(\Omega)$ for any $\epsilon > 0$, the expected $H^1$ error decay rate is around $2/3$. Consequently, increasing $k_U$ is unlikely to effectively reduce the error. Therefore, we keep $k_U = 2$, and focus our study on the impact of mesh refinement on error reduction. Fig.~\ref{fig:advection_diffusion_h_refinement_singular} depicts how $L^2$ and $H^1$ errors decay as we increase the mesh size. The errors of the non-interpolated \acp{nn} are not displayed in the plots, because their performance is relatively poor. This observation is consistent with previous findings in ~\cite{BerroneIVPINN2022}, which highlight the inferior performance of \acp{pinn} and \acp{vpinn} compared to \acp{ivpinn} in this singular solution scenario. Fig~\ref{fig:advection_diffusion_h_refinement_singular} show how the $L^2$ and $H^1$ errors change as the mesh size changes. We obtain the expected error decay rate in Fig.~\ref{fig:advection_diffusion_h_refinement_less_smooth_h1_err}. We conclude that both \acp{feinn} and \acp{ivpinn} perform well in addressing this singular problem, and they successfully overcome the limitations in \acp{nn} in this particular situation.

\subsubsection{The effect of preconditioning on Poisson equation with a singular solution}
In this experiment, we investigate whether preconditioning can effectively accelerate the training process, and examine the potential of leveraging widely used \ac{gmg} preconditioners from \ac{fem} to aid in the training of \acp{feinn}.
 
We only consider the $L^2$-norm of the preconditioned loss, i.e., \eqref{eq:preconditioner_l2}. At a purely algebraic level, we can rewrite \eqref{eq:preconditioner_l2} as:
\begin{equation} \label{eq:preconditioner_l2_discrete}
    \mathscr{L}(\pmb{\theta}_u) = \norm{\mathbf{B}^{-1}\mathbf{A}\mathbf{u}_h(\pmb{\theta}_u) - \mathbf{B}^{-1}\mathbf{f}}_{\ell^2},
\end{equation}
where $\mathbf{B}$ is the preconditioner, $\mathbf{A}$ is the coefficient matrix resulting from discretisation, $\pmb{\theta}_u$ are the parameters for $u_{\mathcal{N}}$, $\mathbf{u}_h$ is the vector of \acp{dof} of $\tilde{U}_h$, and $\mathbf{f}$ is the \ac{rhs} vector.
(We note that, since the mesh being used is (quasi-)uniform, we can replace the $L^2$-norm by the Euclidean $\ell^2$-norm; they differ by a scaling.)

We consider three types of preconditioners. The first one is $\mathbf{B}_{\rm inv} = \mathbf{A}$. Plugged into \eqref{eq:preconditioner_l2_discrete}, the loss becomes $\norm{\mathbf{u}_h(\pmb{\theta}_u) - \mathbf{A}^{-1}\mathbf{f}}_{\ell^2}$. This loss resembles the loss in data fitting tasks, and it should theoretically be easier for \acp{nn} to minimise. In the implementation, we compute a LU-decomposition of the preconditioner $\mathbf{B}_{\rm inv}$. Since the problem is steady-state and linear, we can reuse the factorisation. So, at each iteration during the training, one objective function evaluation requires one forward and one backward substitution. Since $\mathbf{B}_{\rm inv}$ is sparse, the cost for each substitution has linear complexity.

We also consider another preconditioner $\mathbf{B}_{\rm inv\_lin}$, which is defined as the matrix resulting from discretisation with $V_h$ as both trial and test \ac{fe} spaces. Note that $V_h$ is built out of a mesh resulting from the application of $k_U$ levels of uniform refinement to the mesh associated to $U_h$, since $k_V = 1$. The preconditioner $\mathbf{B}_{\rm inv\_lin}$ is computationally cheaper to invert than $\mathbf{B}_{\rm inv}$, since it is \ac{spd} and involves linear \ac{fem} bases only. The last (and cheapest to invert) one is, as mentioned before, a \ac{gmg} preconditioner $\mathbf{B}_{\rm GMG}$ of $\mathbf{B}_{\rm inv\_lin}$. The application of $\mathbf{B}_{\rm GMG}$ scales linearly with the number of \acp{dof} of the \ac{fe} space.

We now change to the Poisson equation. The problem is defined on $\Omega = [0,1]^2$, with $\Gamma_{\rm D} = \partial \Omega$ and $\kappa = 1$. Choose $f$ and $g$ such that the true state is the same as the singular $u$ in Sec.~\ref{subsubsec:singular_cdr_eq_exp}. We divide the domain uniformly into $64\times 64$ quadrilaterals, and then employ $k_U = 2$ or $k_U = 4$ for the \ac{nn} interpolation space. To evaluate the effectiveness of the aforementioned preconditioners, we perform four experiments for each order. Three of them employ the $\mathbf{B}_{\rm inv}$, $\mathbf{B}_{\rm inv\_lin}$, and $\mathbf{B}_{\rm GMG}$ preconditioners, respectively, while the fourth experiment serves as a baseline without any preconditioning, denoted as $\mathbf{B}_{\rm none}$. In all experiments, we use the same initial parameters for the \acp{nn} to ensure a fair comparison.

\begin{figure}
    \centering
    \begin{subfigure}[t]{0.48\textwidth}
        \includegraphics[width=\textwidth]{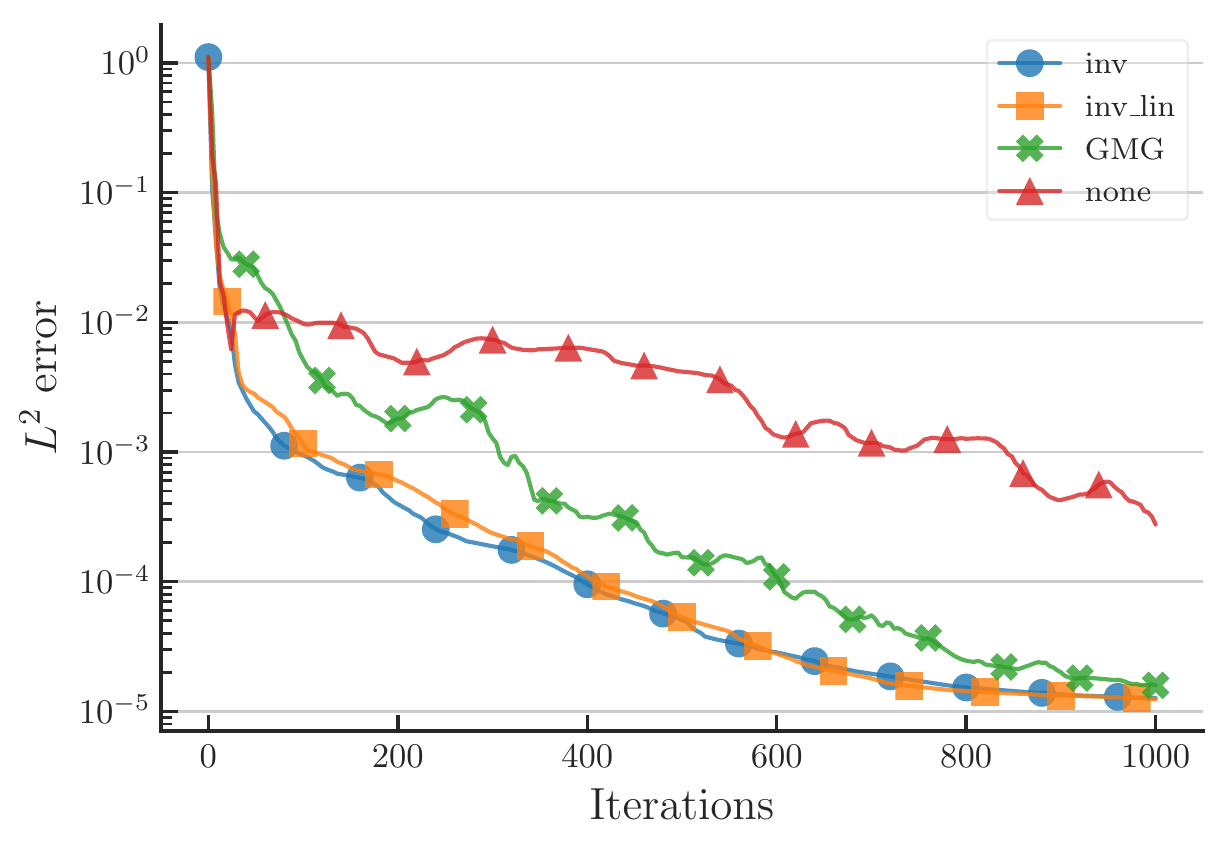}
        \caption{$k_U = 2$}
        \label{fig:poisson_preconditioners_comparison_order_2}
    \end{subfigure}
    \begin{subfigure}[t]{0.48\textwidth}
        \includegraphics[width=\textwidth]{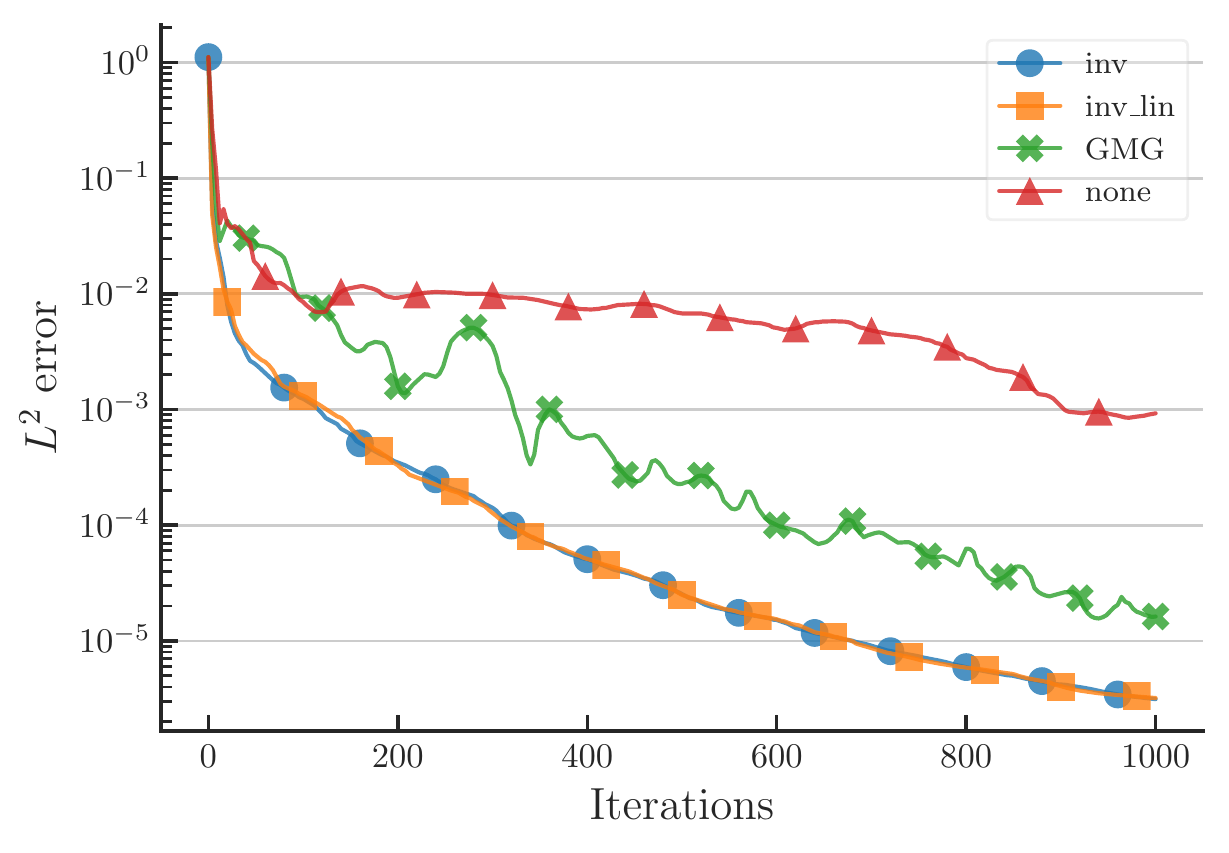}
        \caption{$k_U = 4$}
        \label{fig:poisson_preconditioners_comparison_order_4}
    \end{subfigure}
     
    \caption{$L^2$ error history during training of \acp{feinn} for the forward Poisson problem with a singular solution using different preconditioners.}
    \label{fig:poisson_preconditioners_comparison}
\end{figure}

Fig. ~\ref{fig:poisson_preconditioners_comparison} shows the $L^2$ error history of \acp{feinn} using different preconditioners during training for the first 1,000 iterations. We can extract several findings from the figure. Firstly, as $k_U$ increases, the training for the unpreconditioned loss becomes more challenging. This is evident from the flatter error curve for $\mathbf{B}_{\rm none}$ in Fig.~\ref{fig:poisson_preconditioners_comparison_order_4} compared to Fig.~\ref{fig:poisson_preconditioners_comparison_order_2}. Then, the preconditioners contribute to faster convergence as the error curves of the preconditioned \acp{feinn} are much steeper compared to the one without any preconditioner. Next, the cheaper $\mathbf{B}_{\rm inv\_lin}$ preconditioner is surprisingly as effective as the $\mathbf{B}_{\rm inv}$ preconditioner. Lastly, $\mathbf{B}_{\rm GMG}$ leads to a substantial acceleration of $L^2$ convergence for both order 2 and order 4. Specifically, in Fig.~\ref{fig:poisson_preconditioners_comparison_order_2}, for $k_U = 2$, the standard unpreconditioned loss function requires more than 800 iterations to reduce the $L^2$-error below $10^{-3}$, while $\mathbf{B}_{\rm inv\_lin}$ and $\mathbf{B}_{\rm inv}$ loss functions attain the same error in around 100 iterations and $\mathbf{B}_{\rm GMG}$ requires around 300 iterations. Besides, the \ac{gmg}-preconditioned \ac{feinn} achieves an error level that closely matches the \ac{feinn} preconditioned by the other preconditioners after around 900 iterations. Overall, the difference between the errors of these preconditioned \acp{feinn} and the error of the unpreconditioned \ac{feinn} exceeds one order of magnitude after enough iterations, and reach two others of magnitude in many cases. Similarly, for $k_U = 4$ as shown in Fig~\ref{fig:poisson_preconditioners_comparison_order_4}, the unpreconditioned case requires around 1,000 iterations to reduce the $L^2$-error below $10^{-3}$, $\mathbf{B}_{\rm GMG}$ preconditioned \ac{feinn} needs around 300 iterations, and $\mathbf{B}_{\rm inv\_lin}$ and $\mathbf{B}_{\rm inv}$ preconditioned \acp{feinn} only require around 100 iterations. In this second case, the difference between preconditioned and unpreconditioned training exceeds two orders of magnitude. Although the \ac{gmg} preconditioner may not be as effective as the other preconditioners, it still exhibits a remarkable reduction in the $L^2$ error compared to the unpreconditioned \ac{feinn}, reaching approximately two orders of magnitude after 400 iterations, while being a very cheap preconditioner.

\subsubsection{Poisson equation on a complex geometry} \label{subsubsec:poisson_complex_geometry}
In this section, we demonstrate the capabilities of \acp{feinn} in solving forward Poisson problems defined on general domains. We focus on a slightly modified version of ~\cite[Example~4]{PoissonIrregularDomain2002}. As shown in Fig.~\ref{fig:poisson_complex_geo_true_state}, the computational domain $\Omega$ features a bone-shaped region, which is parameterised by $(x(\theta), y(\theta))$. The parametric equations are defined as $x(\theta) = 0.6\cos(\theta) - 0.3\cos(3\theta)$ and $y(\theta) = 0.7\sin(\theta) - 0.07\sin(3\theta) + 0.2\sin(7\theta)$ with $\theta \in [0, 2\pi]$. Finding appropriate $\Phi$ and $\bar{u}$ for \acp{ivpinn} is very challenging for this irregular domain, so we only examine the performance of \acp{feinn} in our experiments. We consider the Poisson problem with $\kappa(x,y) = 2 + \sin(xy)$, $\Gamma_{\rm D} = \partial \Omega$. We choose $f$ such that the solution is $u(x,y) = \mathrm{e}^x(x^2\sin(y) + y^2)$.

In this study, our focus is on examining the impact of mesh refinement on \acp{feinn}. Consequently, we fix $k_U = 2$ and discretise $\Omega$ using unstructured triangular meshes with an increasing number of cells. In the loss function, we employ the $\ell^1$-norm for the residual vector. Although the $\ell^2$-norm is equally effective, we aim to showcase the flexibility in choosing the norm and to provide evidence supporting the suitability of the $\ell^1$ norm for the \ac{pde} loss. This is particularly relevant, as we consistently use the $\ell^1$ norm for the \ac{pde} part in the subsequent experiments for inverse problems. Similar to the previous sections, we conduct 10 experiments for each mesh resolution, each with distinct initialisations of the \acp{nn}. 

\begin{figure}
    \centering
    \begin{subfigure}[t]{0.48\textwidth}
        \includegraphics[width=\textwidth]{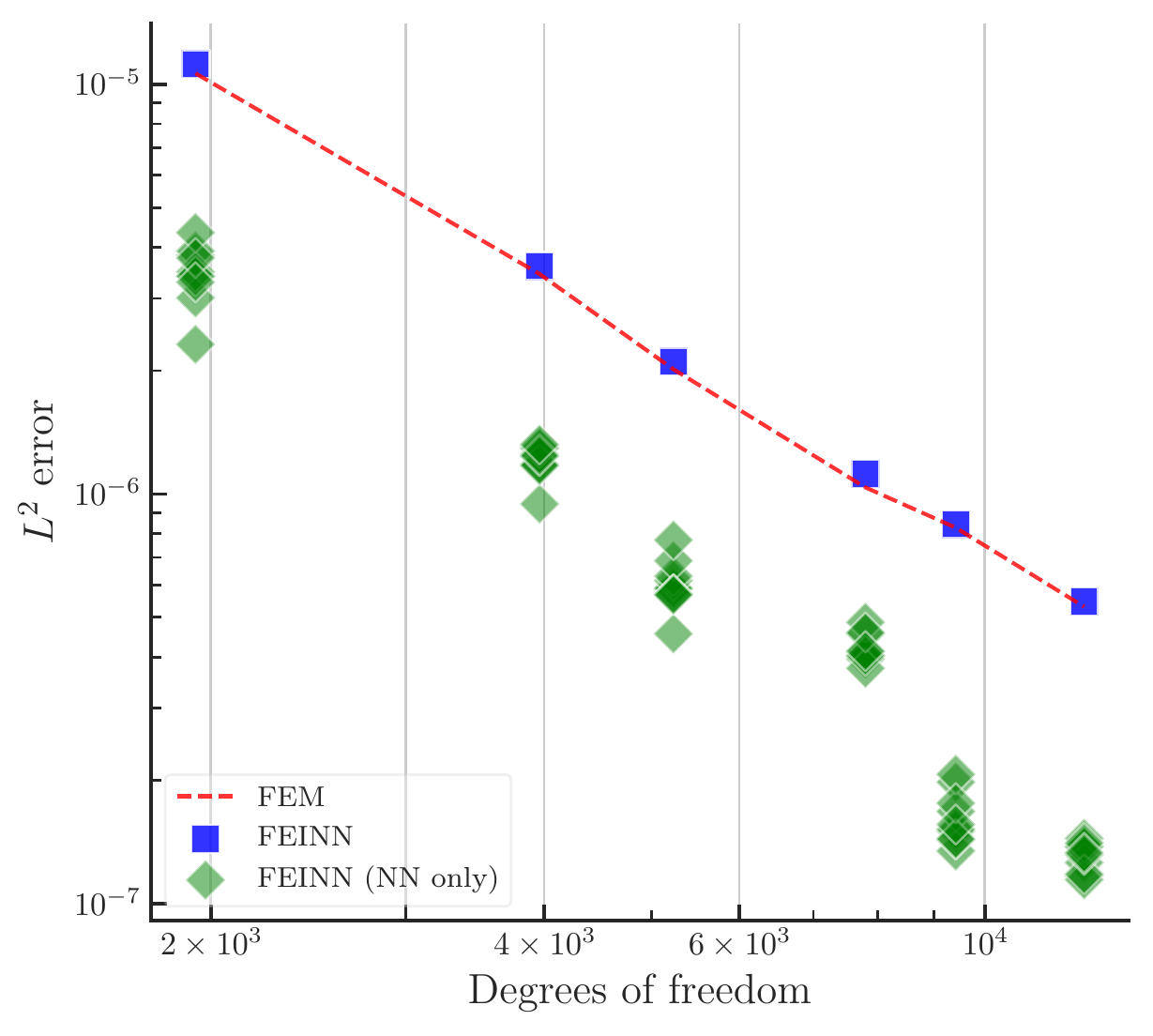}
        \caption{}
        \label{fig:poisson_h_refinement_complex_geo_l2_err}
    \end{subfigure}
    \begin{subfigure}[t]{0.48\textwidth}
        \includegraphics[width=\textwidth]{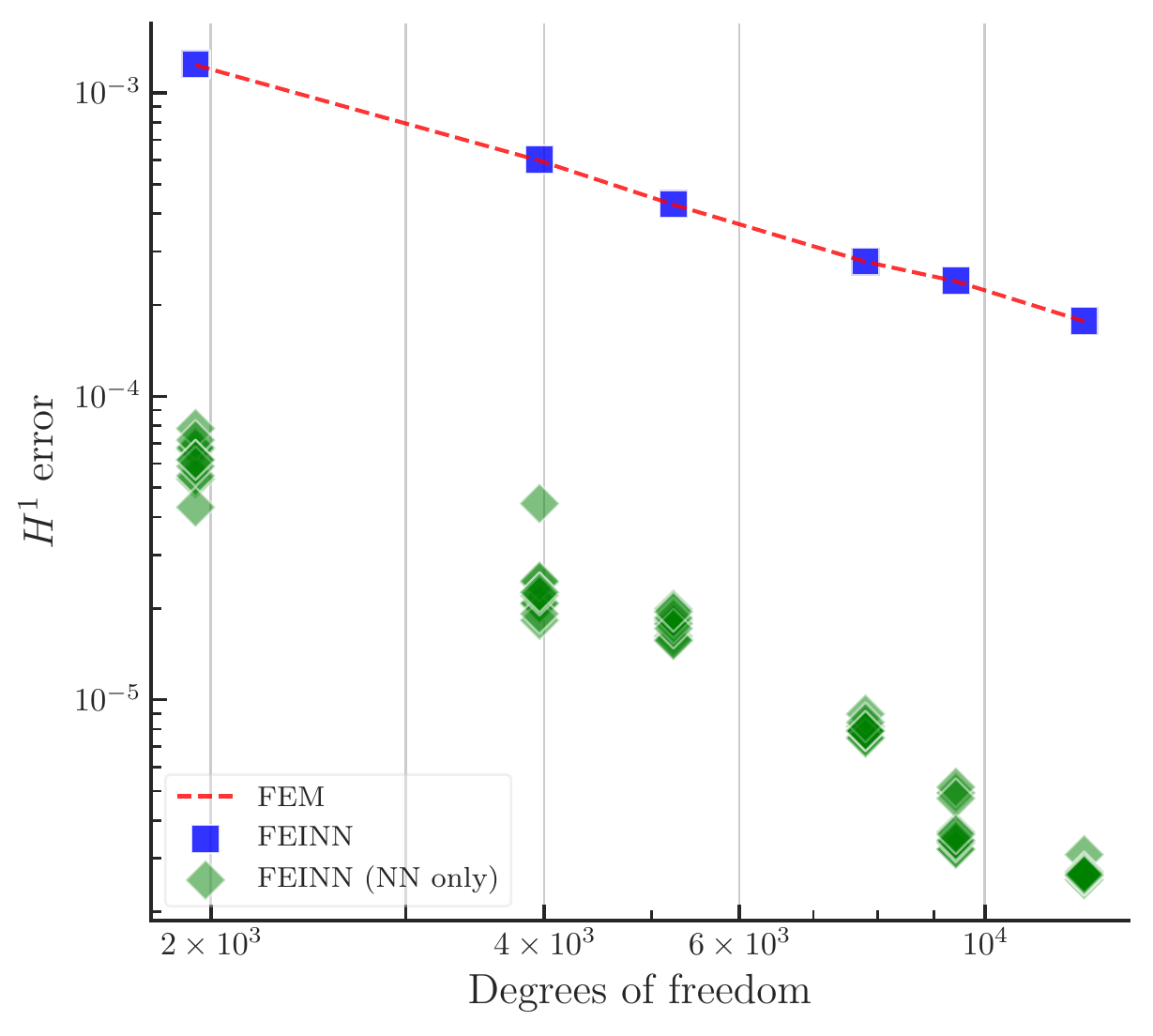}
        \caption{}
        \label{fig:poisson_h_refinement_complex_geo_h1_err}
    \end{subfigure}
     
    \caption{Convergence of errors with respect to \acp{dof} of the trial space for the forward Poisson problem on a bone-shaped geometry.}
    \label{fig:poisson_h_refinement_complex_geo}
\end{figure}

\begin{figure}
    \centering
    \begin{subfigure}[t]{0.32\textwidth}
        \includegraphics[width=\textwidth]{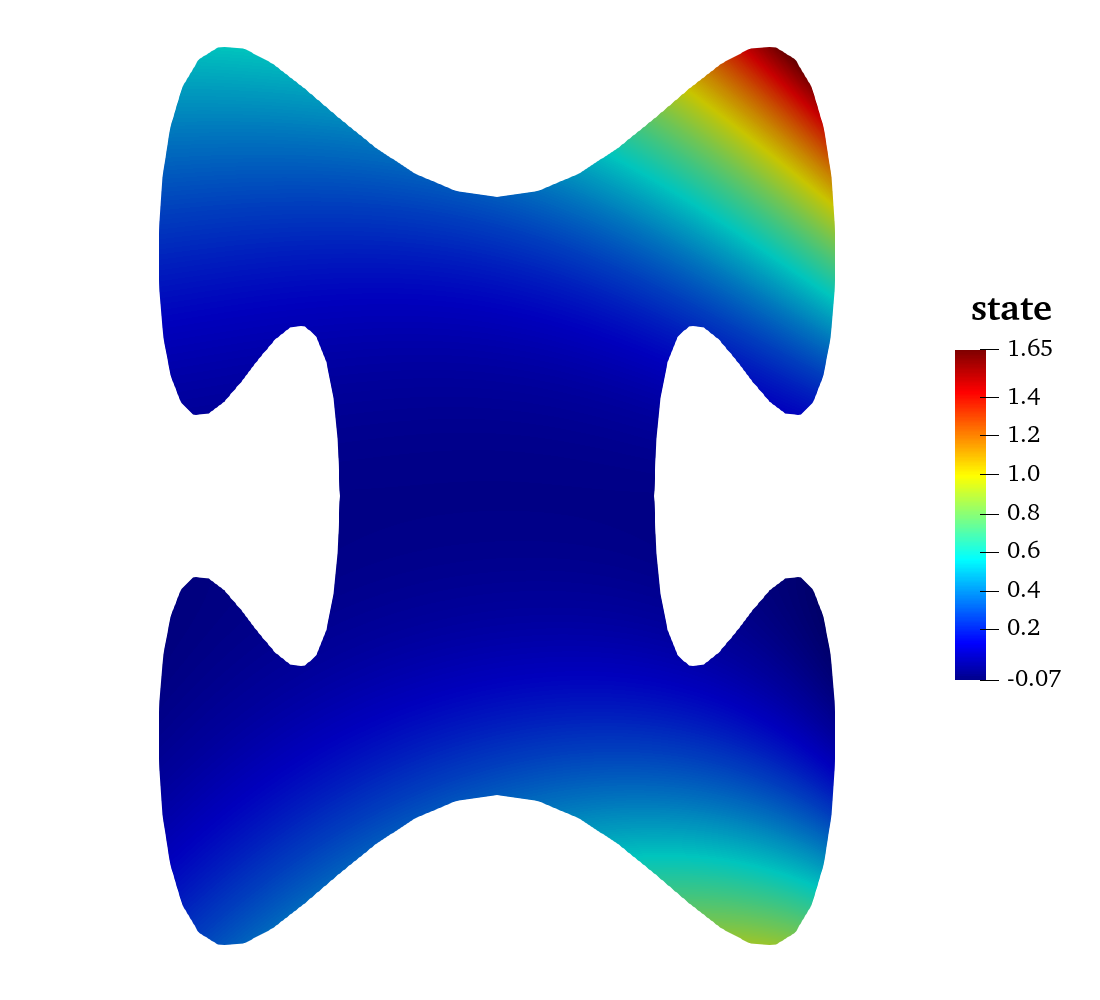}
        \caption{$u$}
        \label{fig:poisson_complex_geo_true_state}
    \end{subfigure}
    \begin{subfigure}[t]{0.32\textwidth}
        \includegraphics[width=\textwidth]{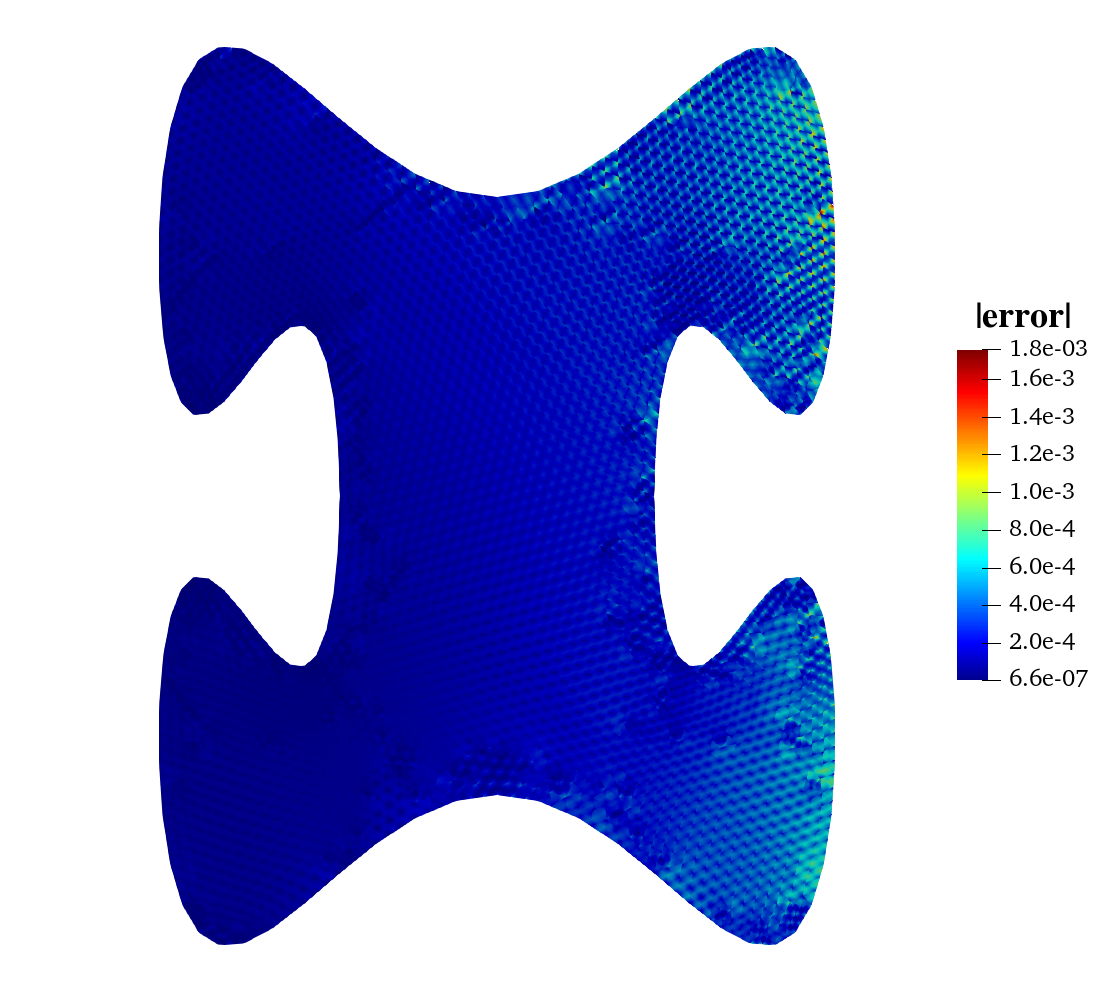}
        \caption{$|\pmb{\nabla}(u - (\tilde{\pi}_h(u_{\mathcal{N}}) + \bar{u}_h))|$}
        \label{fig:poisson_complex_geo_feinn_state_error}
    \end{subfigure}
    \begin{subfigure}[t]{0.32\textwidth}
        \includegraphics[width=\textwidth]{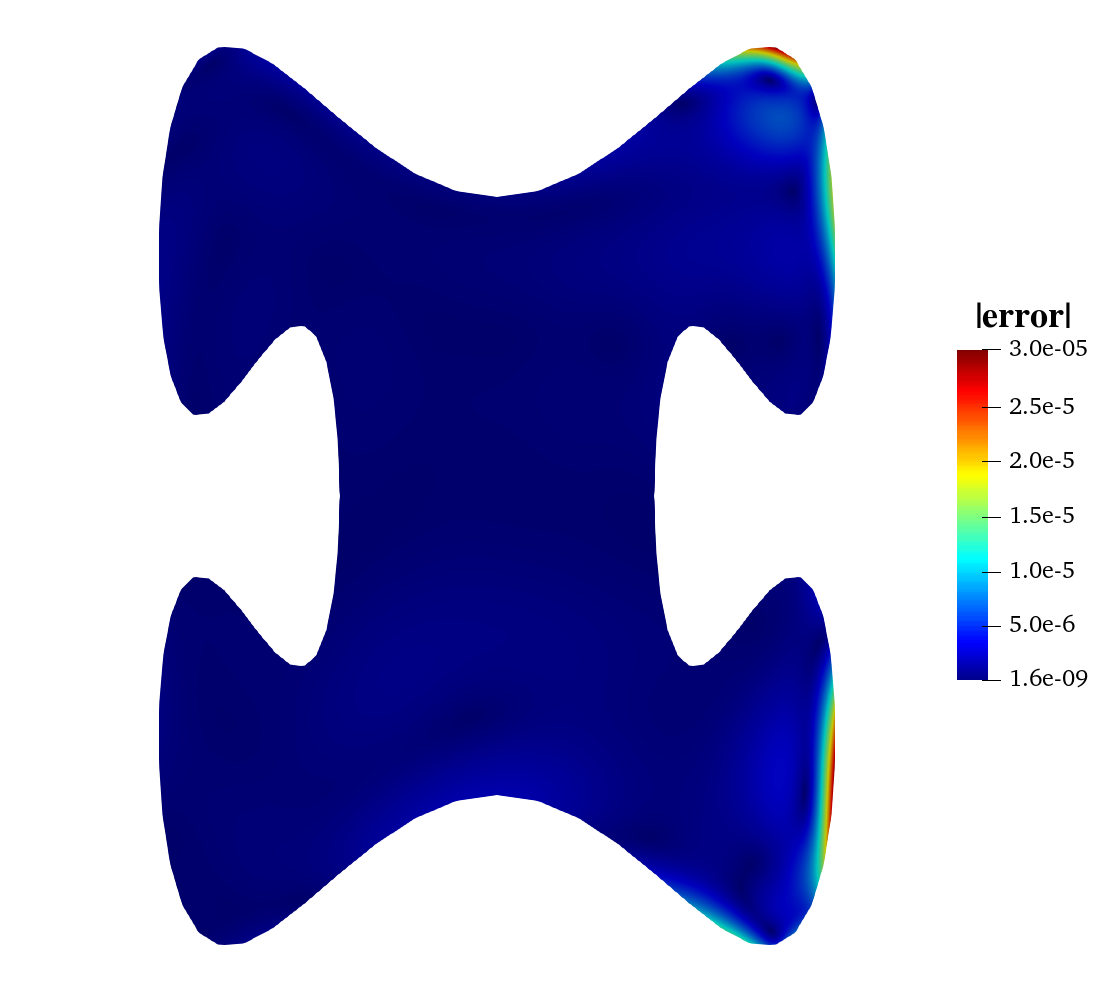}
        \caption{$|\pmb{\nabla}(u - u_{\mathcal{N}})|$}
        \label{fig:poisson_complex_geo_nn_state_error}
    \end{subfigure}
     
    \caption{True state and gradient error magnitude in \ac{feinn} and \ac{nn} solutions for the forward Poisson problem on a bone-shaped domain.}
    \label{fig:poisson_complex_geo_true_state_and_errors}
\end{figure}

Fig.~\ref{fig:poisson_h_refinement_complex_geo} illustrates the changes in $L^2$ and $H^1$ errors as the \acp{dof} in the \ac{fe} interpolation space increase. 
Overall, \acp{feinn} demonstrate almost identical performance to \ac{fem}. 
Importantly, similar to the findings for the forward convection-diffusion-reaction problem with a smooth solution, the non-interpolated \acp{nn} consistently outperform \ac{fem}. Notably, when the mesh is ``fine enough'', there is a remarkable two-order-of-magnitude difference in $H^1$ errors between the \acp{nn} and \ac{fem}, as illustrated in Fig.~\ref{fig:poisson_h_refinement_complex_geo_h1_err}.

To further confirm the superior performance of the \acp{nn} in terms of $H^1$ error, we present the point-wise gradient error magnitudes for the \ac{feinn} solution and the \ac{nn} solution in Fig.~\ref{fig:poisson_complex_geo_feinn_state_error} and \ref{fig:poisson_complex_geo_nn_state_error}, respectively. These figures correspond to one of our experiments conducted on the finest mesh. Notably, we observe a significant two-order-of-magnitude reduction in error magnitude for the \ac{nn} solution compared to the \ac{feinn} solution across most regions of the domain. Additionally, the lack of smoothness of the gradient of the interpolated solution in  \ac{feinn} on a low order $\mathcal{C}^0$ space is evident in Fig.~\ref{fig:poisson_complex_geo_feinn_state_error}. In contrast, one can observe the smoothness of the error of the \ac{feinn} trained \ac{nn} in Fig.~\ref{fig:poisson_complex_geo_nn_state_error}.

\subsection{Inverse problems}\label{sec:inverse}
In the experiments for inverse problems, we introduce the following relative $L^2$ and $H^1$ errors to measure the accuracy of an identified solution $z^{id}$:
\begin{equation*}
    \varepsilon_{L^2(\Omega)}(z^{id}) = \frac{\ltwonorm{z^{id} - z}}{\ltwonorm{z}}, \qquad 
    \varepsilon_{H^1(\Omega)}(z^{id}) = \frac{\honenorm{z^{id} - z}}{\honenorm{z}},
\end{equation*}
where $z$ is the ground truth.

The optimisation involving the penalty term (see (\ref{eq:inverse_loss})) occurs at Step 3, requiring the selection of the norm for $\mathcal{R}_h$ and the corresponding coefficient $\alpha$. In ~\cite[Ch.~17]{nocedal2006numerical}, the authors provide insights into the distinction between utilising $\ell^1$ and $\ell^2$ norms. According to ~\cite[Theorem~17.1]{nocedal2006numerical}, when employing the $\ell^2$-norm for $\mathcal{R}_h$, the minimiser of ~\eqref{eq:inverse_min} becomes a global solution to the inverse problem as $\alpha$ approaches to infinity. Furthermore, ~\cite[Theorem~17.3]{nocedal2006numerical} states that there exists an $\alpha^*$ such that the minimiser in ~\eqref{eq:inverse_min} for the $\ell^1$-norm of $\mathcal{R}_h$ is compelled to coincide with the solution of the inverse problem for any $\alpha \geq \alpha^*$. To avoid choosing an arbitrarily large $\alpha$, we opt to use the $\ell^1$ norm. Moreover, the authors propose ~\cite[Framework~ 17.2]{nocedal2006numerical} for adjusting the coefficient $\alpha$. Following this, we partition Step 3 into several sub-steps. We use a sequence of $\{ \alpha_k \}$ for these sub-steps, where $\alpha_k > \alpha_{k-1}$ for $k > 1$.\footnote{Against common experience in the inverse problem community~\cite{InversePenaltyMethod2015}, penalty coefficients for the PDE residual term are usually considered fixed in \acp{pinn} and related methods (see \cite{PINNs2019,Kharazmi2021}). Similarly, for forward problems, the Dirichlet penalty term (which is also a constraint in the minimisation) is usually kept fixed in these formulations.}

As mentioned before, we split the training process into three steps.  Although we have extensively tested training only \eqref{eq:inverse_loss_discrete}, the three-step strategy consistently yielded superior results. As a result, all the experiments in this section will follow this training process. We introduce the notation $[n_1, n_2, k \times n_3]$ to represent the number of iterations for each step: $n_1$ iterations for the data fitting step, followed by $n_2$ iterations for the model parameter initialisations step, and $k$ sub-steps in the coupled step, with each sub-step consisting of $n_3$ iterations. The sub-steps simply represent a new value of the penalty coefficient. We use the notation $\alpha = [\alpha_1,\alpha_2,...,\alpha_k]$, where $\alpha_1$, $\alpha_2$,..., and $\alpha_k$  are the penalty coefficients at each sub-step.

We employ the softplus activation function for \acp{feinn} in our inverse problem experiments, even though the $\tanh$ activation generally performs comparably or even better. We aim to explore alternative activation functions for \acp{nn} in the context of solving \ac{pde}-constrained problems using \acp{feinn}. We use linear \ac{fe} interpolation space for \acp{feinn}.\footnote{Inverse problems are ill-posed and affected by partial knowledge of the problem and noisy observations. High-order approximations are not necessary or even practical in these situations.} 
In the remaining experiments, unless otherwise specified, we consider $z^{id}$ to be the \ac{fe} interpolation of the \ac{nn} $z_{\mathcal{N}}$. 

\subsubsection{Poisson equation with partial observations} \label{subsubsec:partial_observation}
We begin our inverse problem experiments with a Poisson equation involving partial observations. Following the experiment presented in ~\cite[Sec. 3.1.3]{Hybrid_FEM-NN_2021}, we consider the computational domain $[0,1]^2$ with Dirichlet boundary conditions on the left, bottom, and top sides, and a Neumann boundary condition on the right side. The unknown state and diffusion coefficient (Fig.~\ref{fig:partial_observation_true_coefficient}) are:
\begin{equation*}
    u(x,y) = \sin(\pi x) \sin(\pi y), \qquad 
    \kappa(x,y) = 1+0.5\sin(2\pi x)\sin(2\pi y).
\end{equation*}
Fig.~\ref{fig:partial_observation_true_state} illustrates the true state, and our observations are limited to every \ac{dof} inside the white box located at the center of the figure. The objectives of this experiment are to reconstruct the partially known state and to recover the unknown diffusion coefficient. 

We discretise the domain by $50 \times 50$ quadrilaterals. Both \acp{nn}, $u_{\mathcal{N}}$ and $\kappa_{\mathcal{N}}$, have the same structure with $L = 2$ layers and each hidden layer has $n = 20$ neurons. To ensure the positivity of the diffusion coefficient, we apply a rectification function $r(x) = |x| + 0.01$ as the activation for the output layer of $\kappa_{\mathcal{N}}$. Although $r(x) = x^2 + 0.01$ also produces satisfactory results, it is more common to use an output layer with linear features. The training iterations are $[400, 400, 3 \times 400]$, and the penalty coefficients are $\alpha = [0.1, 0.3, 0.9]$.

\begin{figure}
    \centering
    \begin{subfigure}[t]{0.32\textwidth}
        \includegraphics[width=\textwidth]{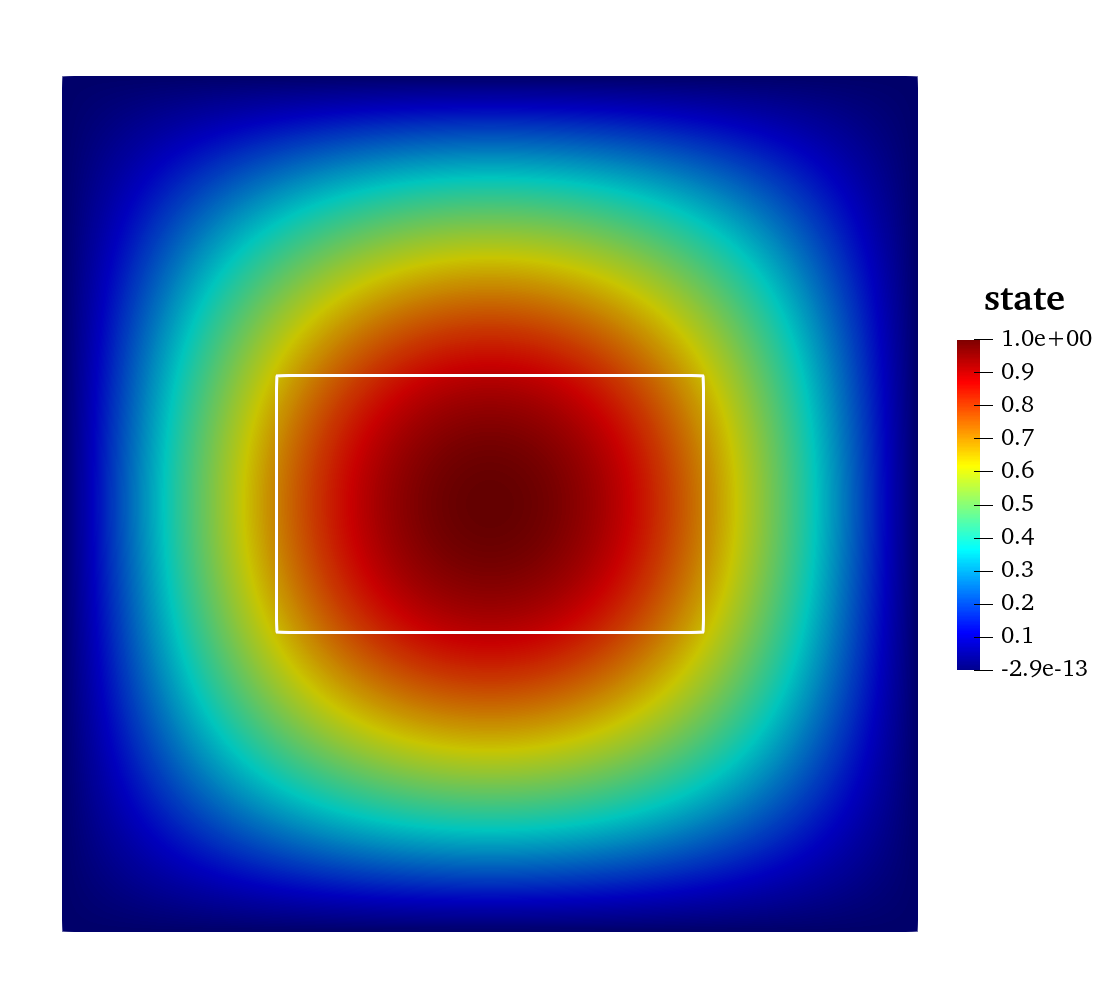}
        \caption{$u$}
        \label{fig:partial_observation_true_state}
    \end{subfigure}
    \begin{subfigure}[t]{0.32\textwidth}
        \includegraphics[width=\textwidth]{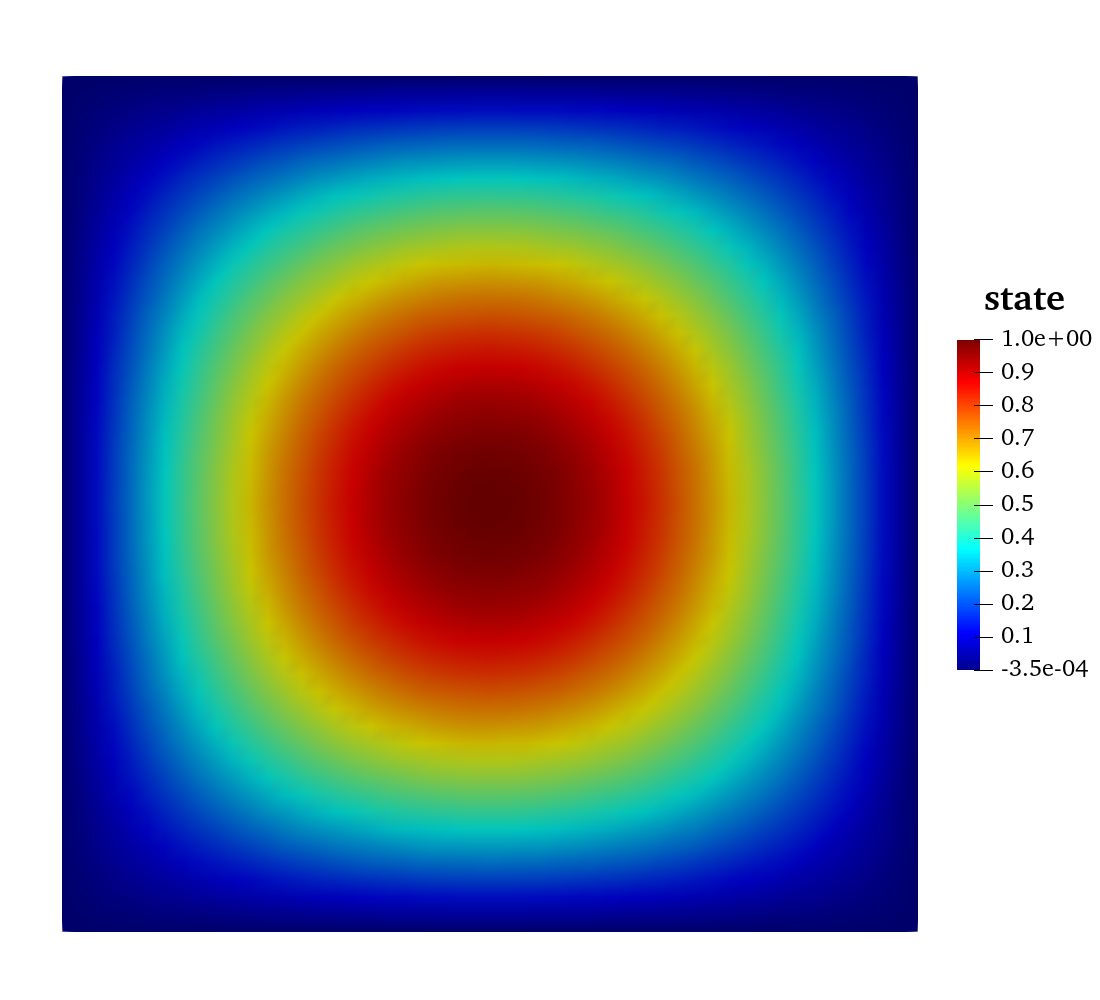}
        \caption{$u^{id}$}
        \label{fig:partial_observation_feinn_state}
    \end{subfigure}
    \begin{subfigure}[t]{0.32\textwidth}
        \includegraphics[width=\textwidth]{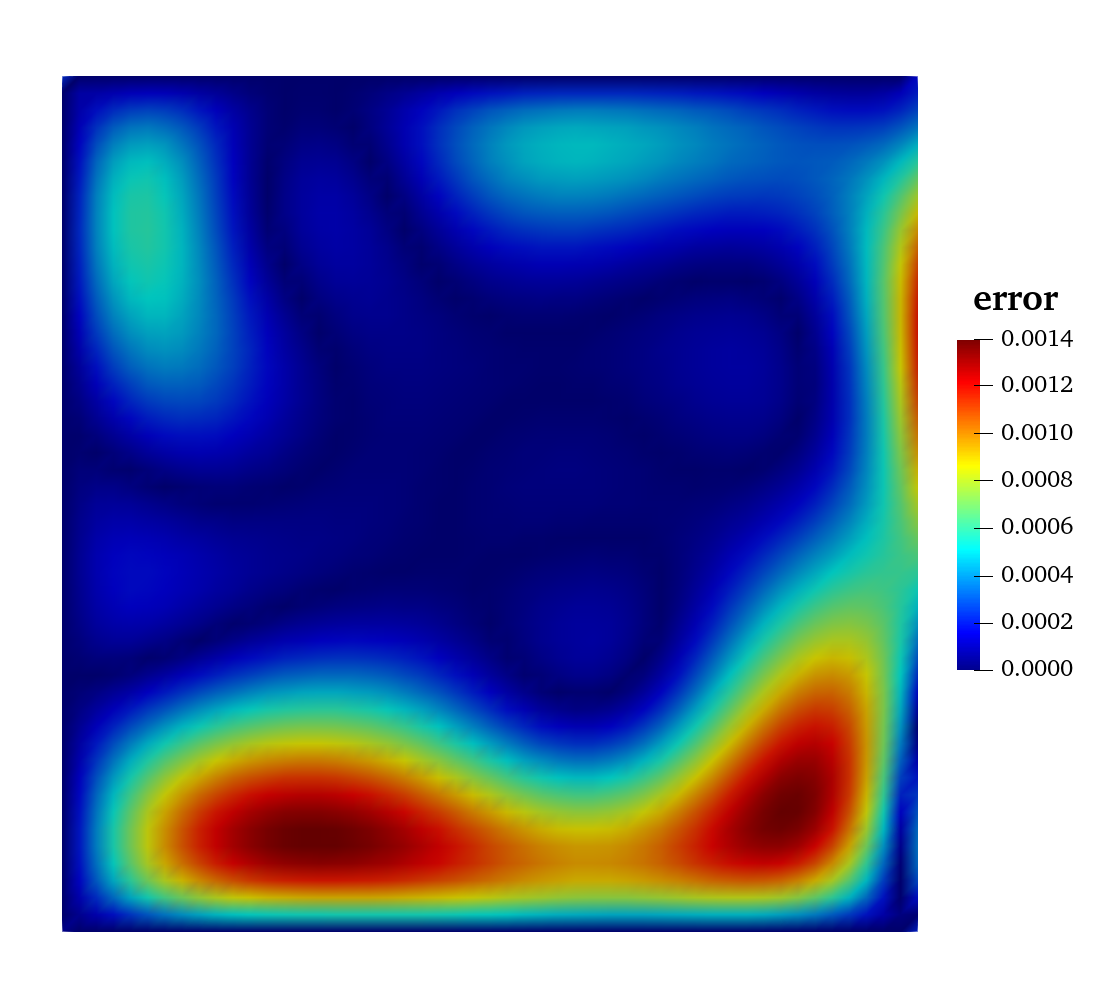}
        \caption{$|u - u^{id}|$}
        \label{fig:partial_observation_state_error}
    \end{subfigure}

    \centering
    \begin{subfigure}[t]{0.32\textwidth}
        \includegraphics[width=\textwidth]{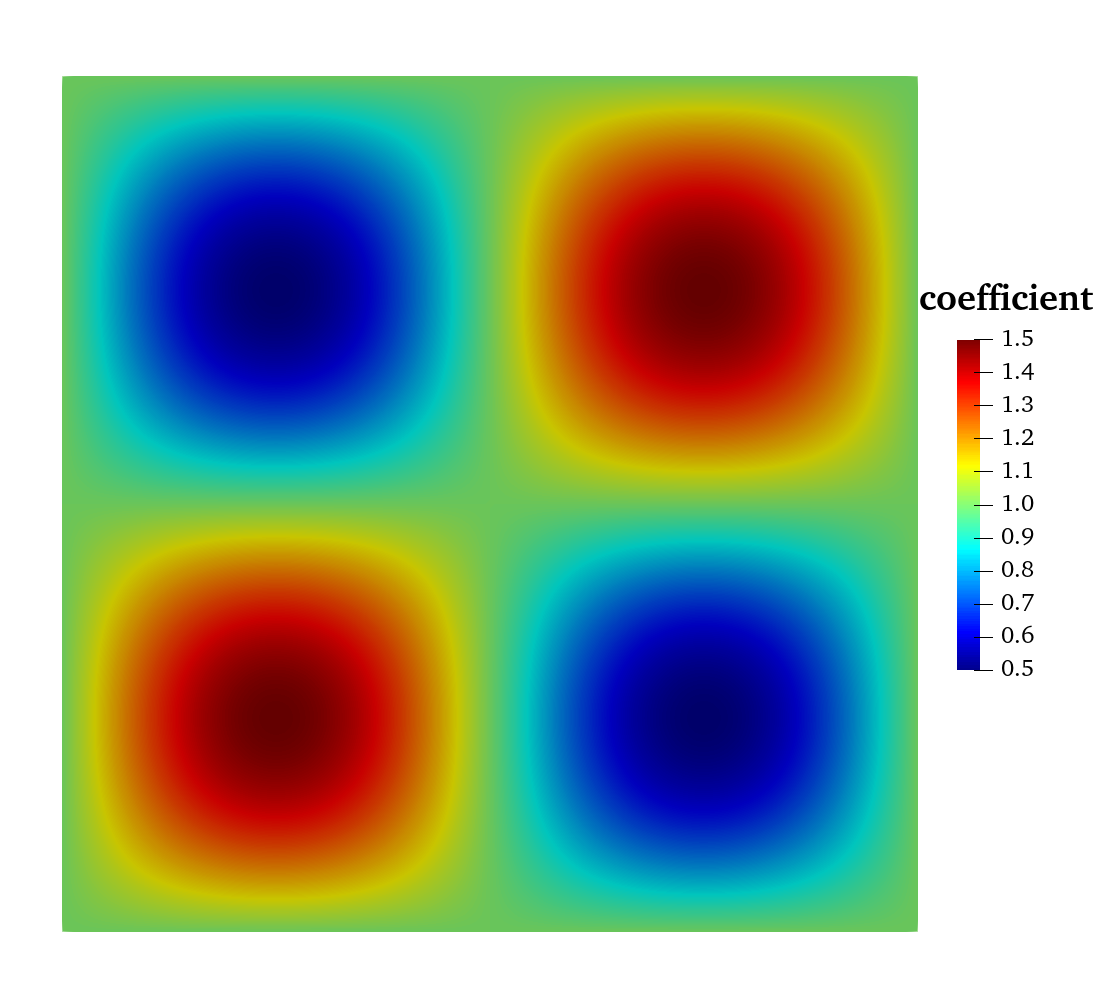}
        \caption{$\kappa$}
        \label{fig:partial_observation_true_coefficient}
    \end{subfigure}
    \begin{subfigure}[t]{0.32\textwidth}
        \includegraphics[width=\textwidth]{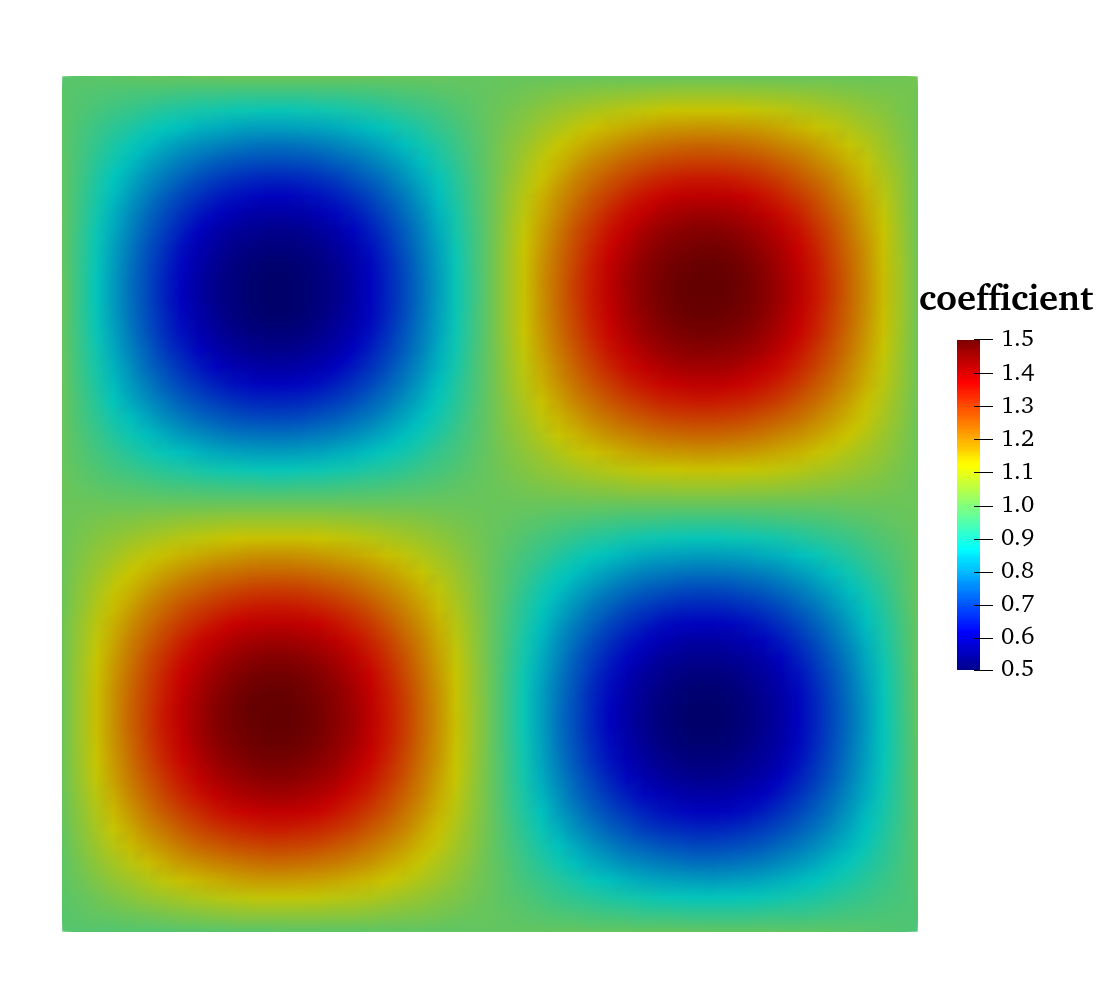}
        \caption{$\kappa^{id}$}
        \label{fig:partial_observation_feinn_coefficient}
    \end{subfigure}
    \begin{subfigure}[t]{0.32\textwidth}
        \includegraphics[width=\textwidth]{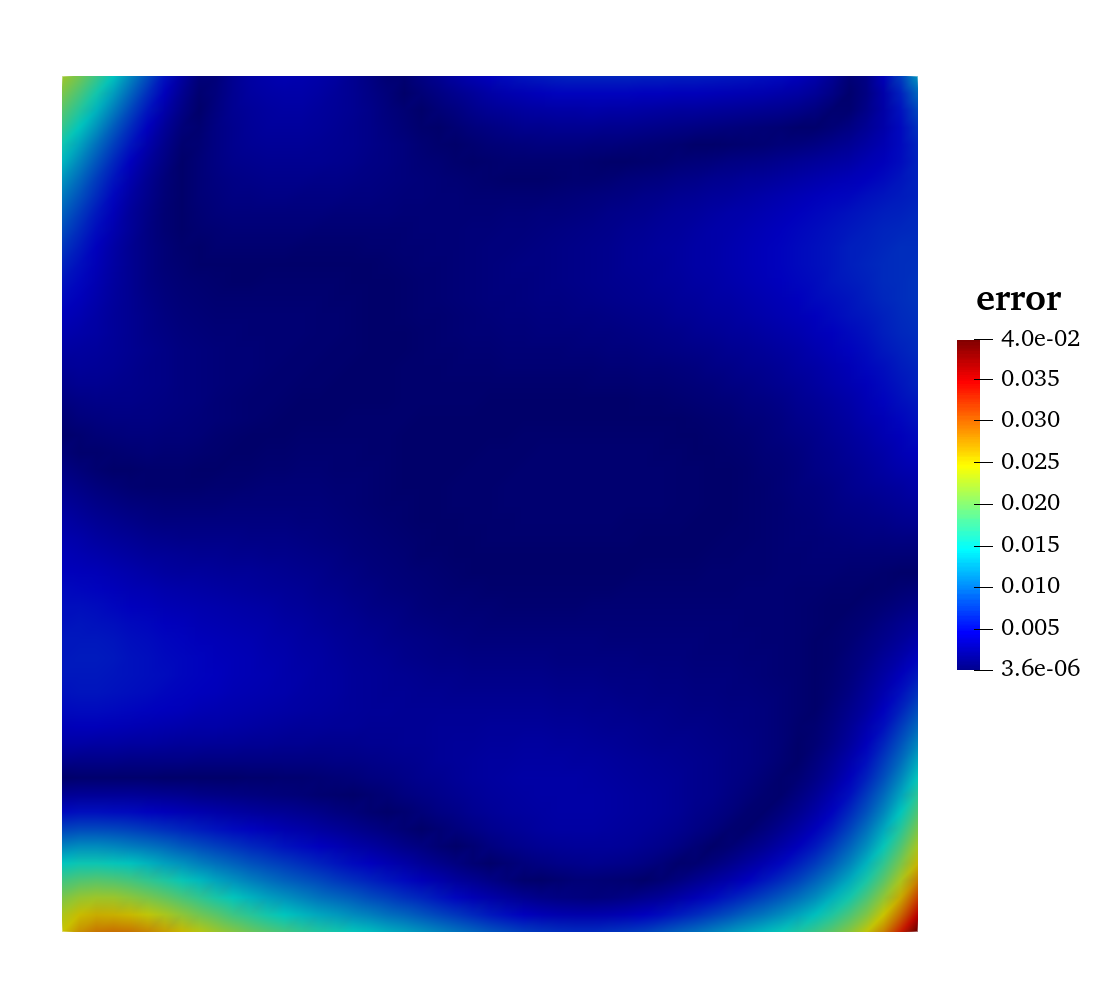}
        \caption{$|\kappa - \kappa^{id}|$}
        \label{fig:partial_observation_coefficient_error}
    \end{subfigure}
     
    \caption{Comparison of the true solutions (first column), the \ac{feinn} solutions (second column), and corresponding point-wise errors (third column) for the inverse Poisson problem with partial observations. The presented results are from a specific experiment. The first row depicts the state, while the second row represents the coefficient. The observations of $u$ are limited to the white box.}
    \label{fig:partial_observation_experiment_results}
\end{figure}

In Fig.~\ref{fig:partial_observation_experiment_results}, we display the \acp{feinn} solutions along with their corresponding errors in comparison to the true solutions. The identified state $u^{id}$ in Fig.~\ref{fig:partial_observation_feinn_state} closely resembles the true state $u$ in Fig.~\ref{fig:partial_observation_true_state}, accompanied by very small point-wise errors in Fig.~\ref{fig:partial_observation_state_error}. These observations highlight the effectiveness of \acp{feinn} at completing the partial observations. Fig.~\ref{fig:partial_observation_coefficient_error} displays the small point-wise error of $\kappa^{id}$, further confirming the accuracy of our approach on discovering the unknown diffusion coefficient.

\begin{figure}
    \centering
    \begin{subfigure}[t]{0.32\textwidth}
        \includegraphics[width=\textwidth]{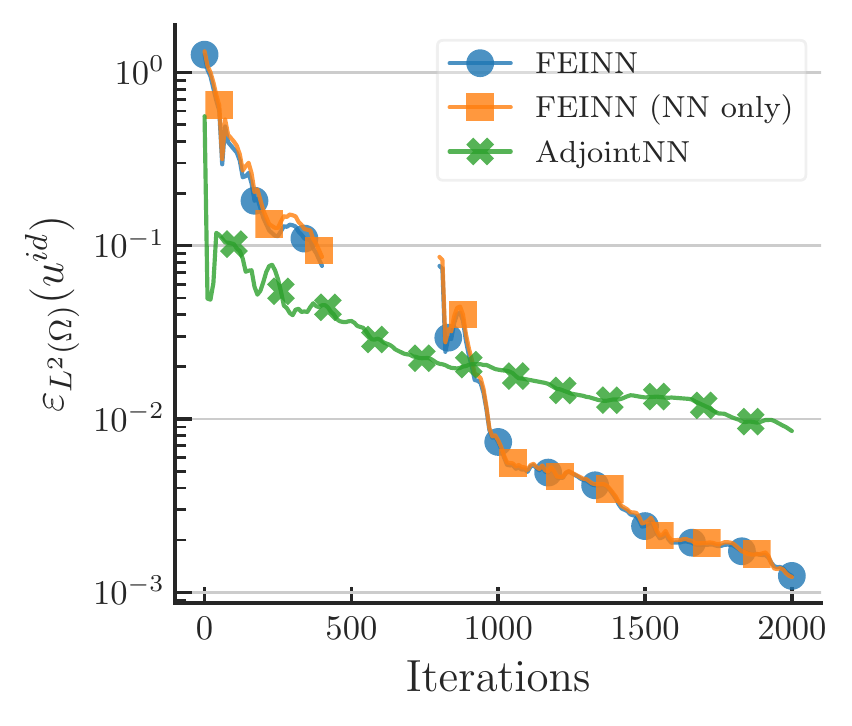}
        \caption{}
        \label{fig:partial_observation_ul2error_feinn_adjoint_nn_cmp}
    \end{subfigure}
    \begin{subfigure}[t]{0.32\textwidth}
        \includegraphics[width=\textwidth]{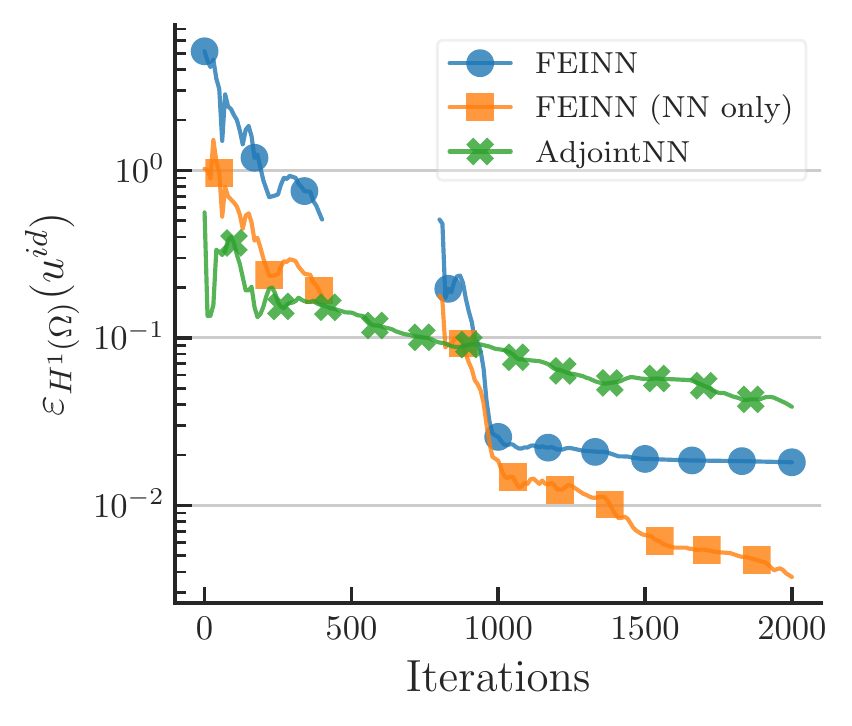}
        \caption{}
        \label{fig:partial_observation_uh1error_feinn_adjoint_nn_cmp}
    \end{subfigure}
    \begin{subfigure}[t]{0.32\textwidth}
        \includegraphics[width=\textwidth]{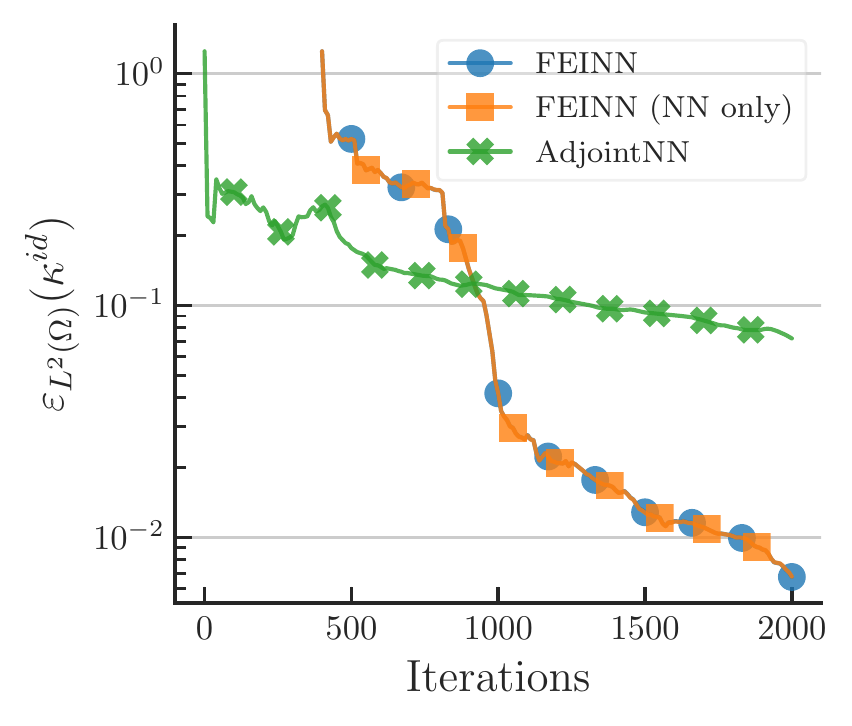}
        \caption{}
        \label{fig:partial_observation_kl2error_feinn_adjoint_nn_cmp}
    \end{subfigure}
     
    \caption{Comparison among \acp{feinn} and adjoint \ac{nn} in terms of relative errors during training for the inverse Poisson problem with partial observations. The optimisation loop was run for 2,000 iterations in both cases.}
    \label{fig:partial_observation_errors_feinn_adjoint_nn_cmp}
\end{figure}

In order to also consider the relative merits of \acp{feinn} compared to other approaches proposed in the literature,  
Fig.~\ref{fig:partial_observation_errors_feinn_adjoint_nn_cmp}
reports the relative error history for the 
state and coefficient throughout the training process 
for \acp{feinn} and our Julia implementation of the adjoint-based \ac{nn} method (adjoint NN).
While adjoint NN approximates the unknown diffusion coefficient with a neural network, 
it still approximates the state using a \ac{fe} space, and uses the adjoint solver to compute the gradient of the data misfit with respect to the \ac{nn} parameters, resulting in a two-loop optimisation process. Adjoint \ac{nn} was first introduced in~\cite{NNAugumentedFEM2017}, and then further explored in~\cite{Hybrid_FEM-NN_2021}.
In the experiment, both methods ran their optimisations for 2,000 iterations with identical $\kappa_{\mathcal{N}}$ structures and initialisations. 
The gaps in the state error curves in Fig.~\ref{fig:partial_observation_ul2error_feinn_adjoint_nn_cmp} and ~\ref{fig:partial_observation_uh1error_feinn_adjoint_nn_cmp} for \acp{feinn} correspond to the second model parameter initialisation step, where $u_{\mathcal{N}}$ is not trained. 
Similarly, the coefficient error curve in Fig.~\ref{fig:partial_observation_kl2error_feinn_adjoint_nn_cmp} for \acp{feinn} starts at iteration 401 as $\kappa_{\mathcal{N}}$ is not trained in the initial data fitting step. The shapes of the curves for \acp{feinn} align with the motivation behind the three-step training process, where the first and second steps aim to lead $u_{\mathcal{N}}$ and $\kappa_{\mathcal{N}}$ to a good initialisation, while the third step focuses on further improving the accuracy. The experiments were performed on a single core of an AMD Ryzen Threadripper 3960X CPU, and we also report the computational cost for both methods: for \acp{feinn}, the average training time is 0.028 seconds per iteration, whereas adjoint \ac{nn} requires 0.040 seconds per iteration.\footnote{It is important to note that the computational cost of these two methods is not easily comparable, since they have different computational requirements. The adjoint method involves (non)linear solvers per external iteration, while \acp{feinn} must compute the differentiation of the \ac{nn} with respect to parameters not only for the physical coefficients but also the state variable. Thus, the relative cost of these methods will be influenced by various factors, including the structure of \acp{nn}, the implementation of the (non)linear solver, the specific problem being addressed, etc.}
Notably, benefiting from our three-step training strategy and an additional network for state approximation, 
Fig.~\ref{fig:partial_observation_errors_feinn_adjoint_nn_cmp} reveals that \acp{feinn} have the potential to yield superior accuracy compared to adjoint \ac{nn}, as all \ac{feinn} curves remain below the error curves of adjoint \ac{nn} after approximately 800 iterations.
{Moreover, we also plot the error curves for the non-interpolated \acp{nn} in Fig.~\ref{fig:partial_observation_errors_feinn_adjoint_nn_cmp}. Similar to the findings in Sec.~\ref{sec:advection_diffusion_eq_smooth}, the smoothness of \ac{nn} contributes to improved $H^1$ accuracy of a smooth $u$.}      

\begin{figure}
    \centering
    \includegraphics[width=\textwidth]{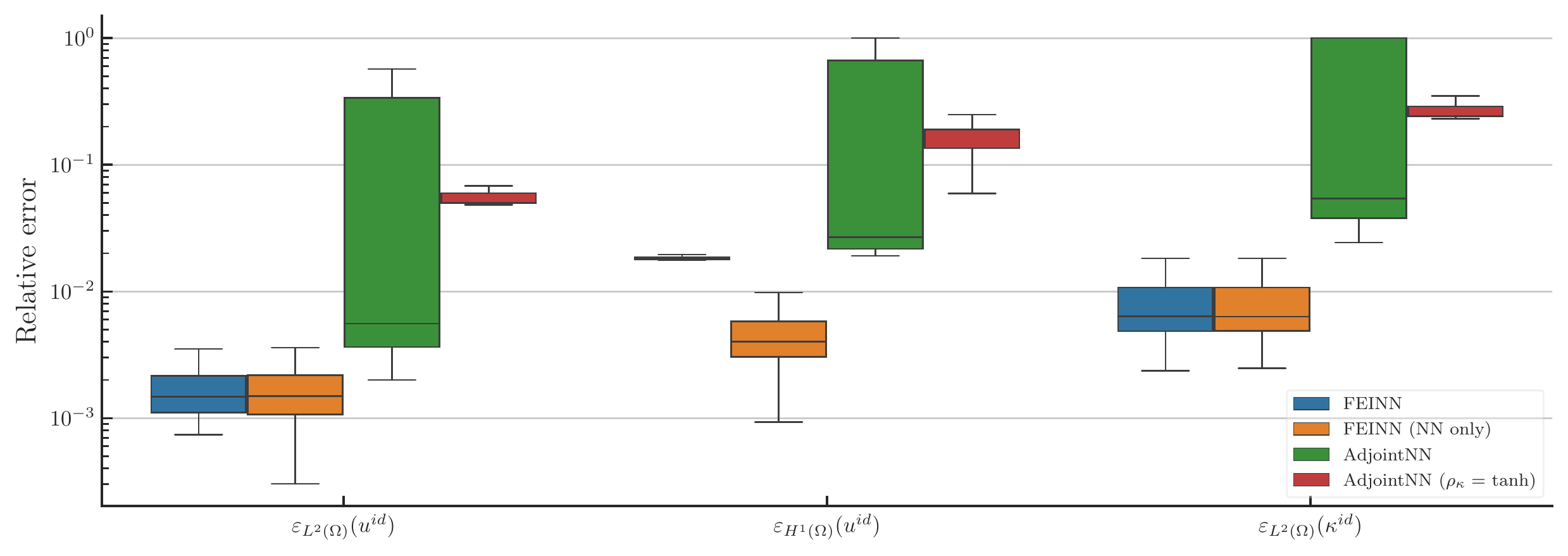}
    \caption{Comparison among \acp{feinn} and adjoint \ac{nn} in terms of relative errors (depicted using box plots) with different initialisations for the inverse Poisson problem with partial observations.}
    \label{fig:partial_observation_methods_cmp_boxplot}
\end{figure}

In the sequel, we also compare the robustness with respect to \ac{nn} initialisation of \acp{feinn} and adjoint \ac{nn}. We solve the inverse problem 100 times with different \ac{nn} initialisations. The same \ac{nn} structure and parameter initialisation of $\kappa_{\mathcal{N}}$ were used for both methods in order to have a fair comparison. In Fig.~\ref{fig:partial_observation_methods_cmp_boxplot}, we depict with box plots the relative errors for the state and the diffusion coefficient from these 100 experiments. Whiskers in the box plot represent the minimum and maximum values within 1.5 times the interquartile range. 
 
Let us first comment on the results obtained with \acp{feinn}. Most of the errors for the state $\varepsilon_{L^2(\Omega)}(u^{id})$ and $\varepsilon_{H^1(\Omega)}(u^{id})$ are very small, with the largest $\varepsilon_{L^2(\Omega)}(u^{id})$ below $0.8\%$. Besides, the majority of the relative coefficient errors $\varepsilon_{L^2(\Omega)}(\kappa^{id})$ are below $1\%$. Consequently, we conclude that \acp{feinn} are robust with respect to initialisation in solving this inverse problem with partial observations. Again, the label tag ``(NN only)'' of \acp{feinn} denotes the errors of the \acp{nn} themselves. We observe that the $L^2$ errors for $\kappa_{\mathcal{N}}$ are nearly equivalent to their interpolated counterparts. However, consistent with the findings in Fig.~\ref{fig:partial_observation_errors_feinn_adjoint_nn_cmp}, since $u$ is smooth, $u_{\mathcal{N}}$ surpasses their interpolations in $H^1$ accuracy, with potential for improved $L^2$ accuracy.

The results corresponding to adjoint \ac{nn} are presented in Fig.~\ref{fig:partial_observation_methods_cmp_boxplot} as box plots labelled ``AdjointNN''. During training, we observe that a good initialisation for $\kappa_{\mathcal{N}}$ is imperative, otherwise the optimisation quits prematurely as the gradient norm drops below $10^{-10}$. This occurrence results in considerably adverse outcomes, at times with $\varepsilon_{L^2(\Omega)}(\kappa^{id})$ exceeding $10^8$. To enhance visual clarity, when constructing the box plots, errors surpassing 1 are standardised to 1. As Fig.~\ref{fig:partial_observation_methods_cmp_boxplot} indicates, we also explored the activation function $\tanh$ as proposed in~\cite{Hybrid_FEM-NN_2021} (tagged as ``($\rho_\kappa = \tanh$)''). However, neither of these configurations produce results outperforming those achieved by \acp{feinn}. Therefore, the adjoint \ac{nn} method clearly shows less robustness than \acp{feinn} in this partial observations situation.

\subsubsection{Poisson equation with noisy observations} \label{subsubsec:noisy_observation}
In this experiment, we explore the effectiveness of \acp{feinn} in solving an inverse Poisson problem with noisy data. Following the settings in ~\cite[Sec. 3.1.2]{Hybrid_FEM-NN_2021}, we consider the true state (Fig.~\ref{fig:poisson_noisy_observation_true_state}) and diffusion coefficient (Fig.~\ref{fig:poisson_noisy_observation_true_coefficient}) as:
\begin{equation*}
    u(x, y) = \sin(\pi x) \sin(\pi y), \qquad
    \kappa(x, y) = \frac{1}{1 + x^2 + y^2 + (x-1)^2 + (y-1)^2}.   
\end{equation*}
The domain $\Omega$, its discretisation and boundary conditions remain the same as in Sec.~\ref{subsubsec:partial_observation}. The state at each \ac{dof} is known but contaminated with Gaussian noise $\epsilon \sim N(0, 0.05^2)$. The objectives of this experiment are to reconstruct the state from the noisy data and to estimate the unknown diffusion coefficient. 

The structures for $u_{\mathcal{N}}$ and $\kappa_{\mathcal{N}}$ are the same as the ones in Sec.~\ref{subsubsec:partial_observation}. We again apply $r(x) = |x|+0.01$ to the output layer of $\kappa_{\mathcal{N}}$ to ensure a positive diffusion coefficient. The training iterations are $[300, 100, 2\times300]$, with a total of 1,000, matching the setup in~\cite[Sec. 3.1.2]{Hybrid_FEM-NN_2021}. The penalty coefficients are $\alpha = [1.0, 3.0]$.

\begin{figure}
    \centering
    \begin{subfigure}[t]{0.32\textwidth}
        \includegraphics[width=\textwidth]{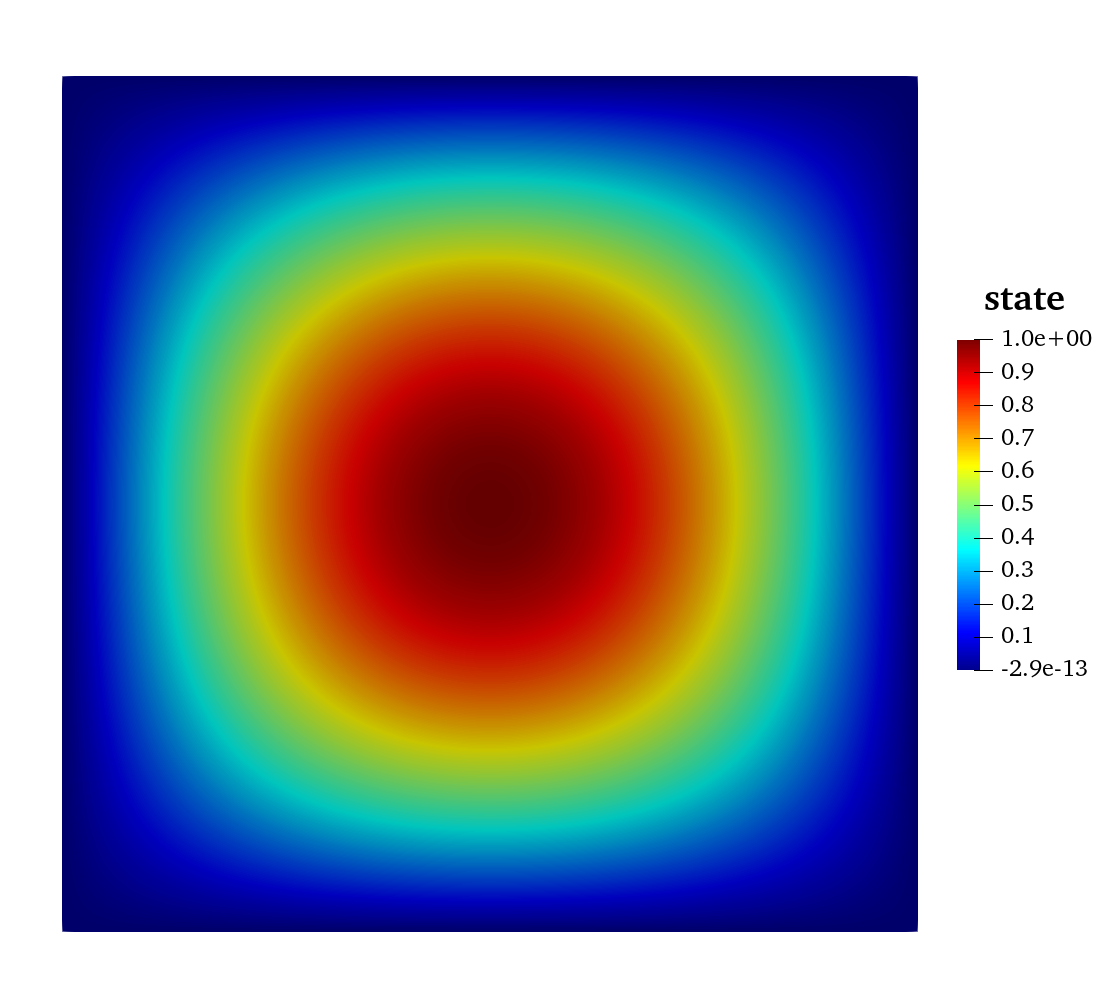}
        \caption{$u$}
        \label{fig:poisson_noisy_observation_true_state}
    \end{subfigure}
    \begin{subfigure}[t]{0.32\textwidth}
        \includegraphics[width=\textwidth]{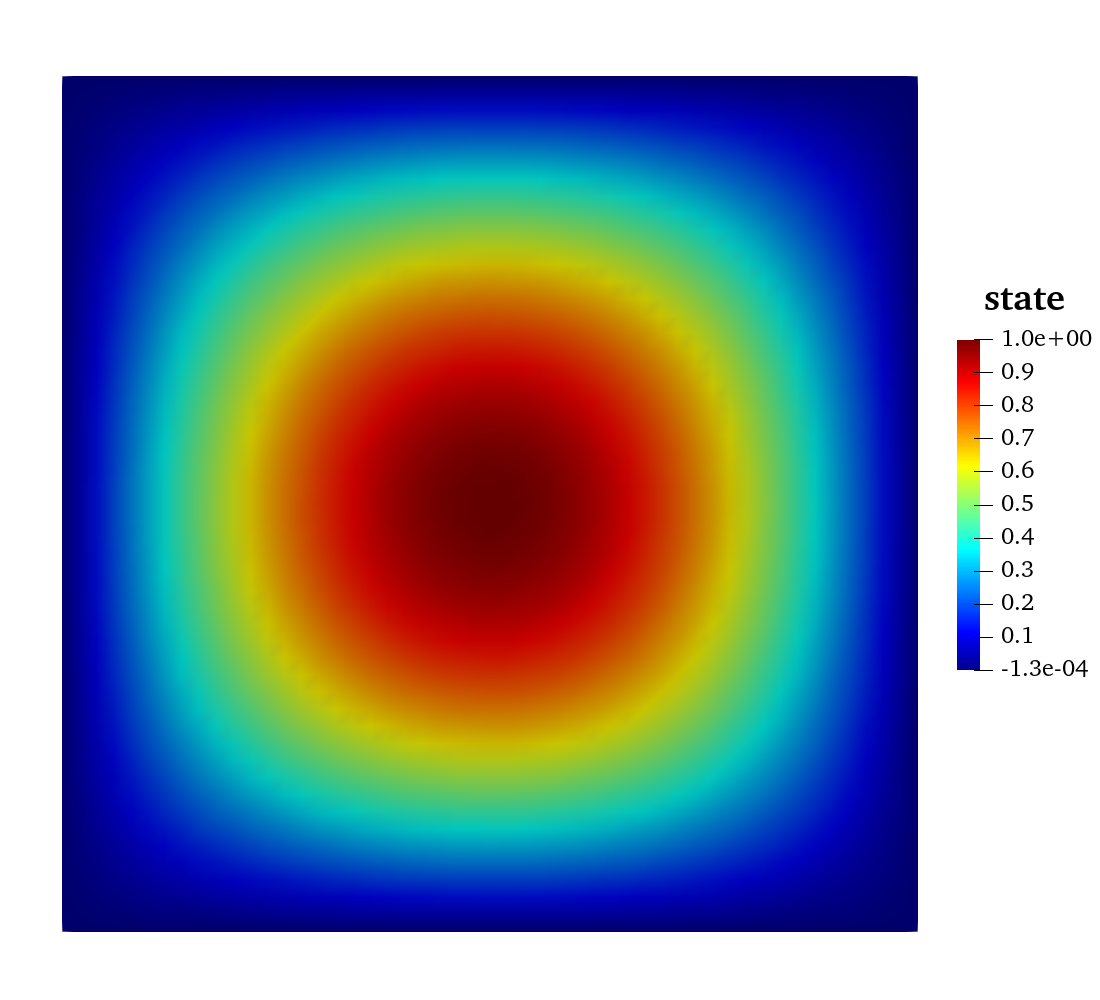}
        \caption{$u^{id}$}
        \label{fig:poisson_noisy_observation_feinn_state}
    \end{subfigure}
    \begin{subfigure}[t]{0.32\textwidth}
        \includegraphics[width=\textwidth]{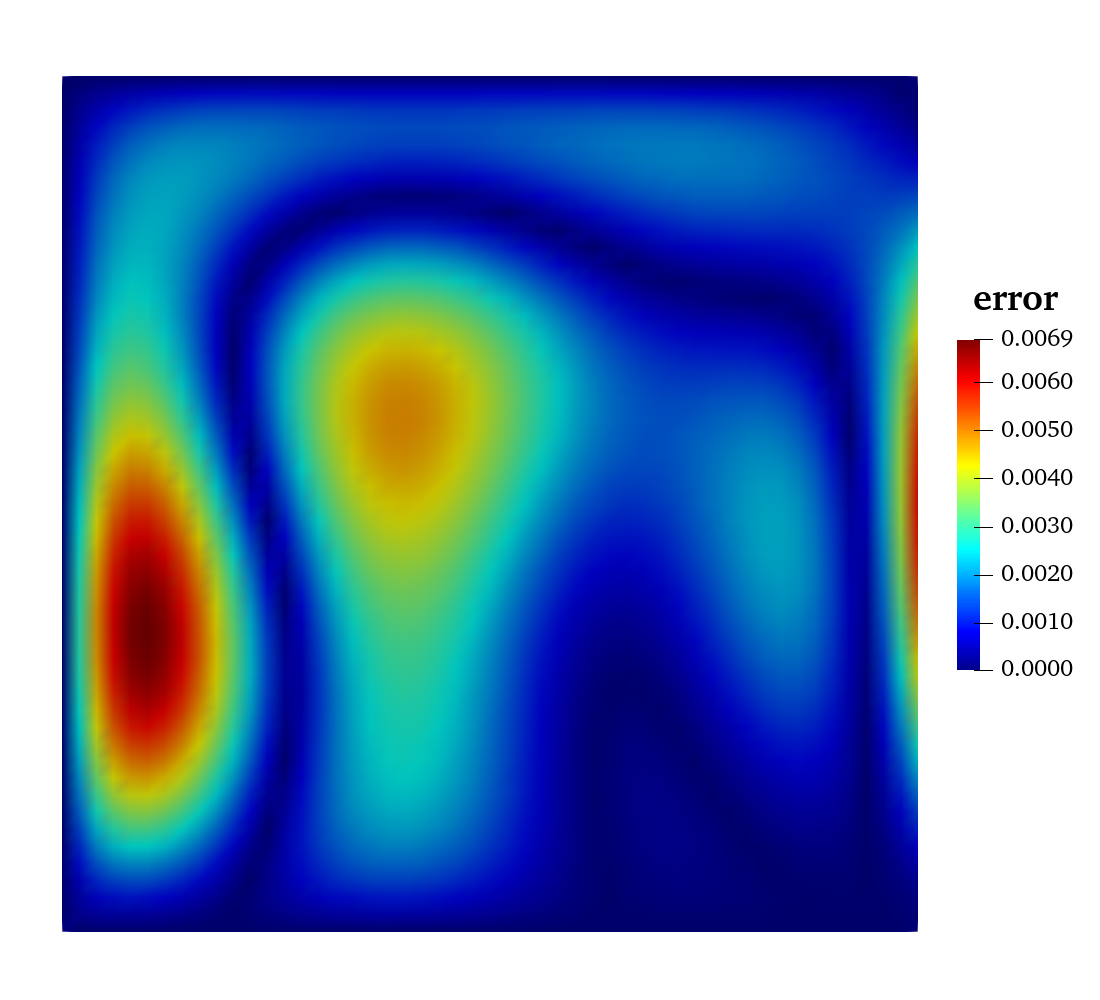}
        \caption{$|u - u^{id}|$}
        \label{fig:poisson_noisy_observation_state_error}
    \end{subfigure}

    \centering
    \begin{subfigure}[t]{0.32\textwidth}
        \includegraphics[width=\textwidth]{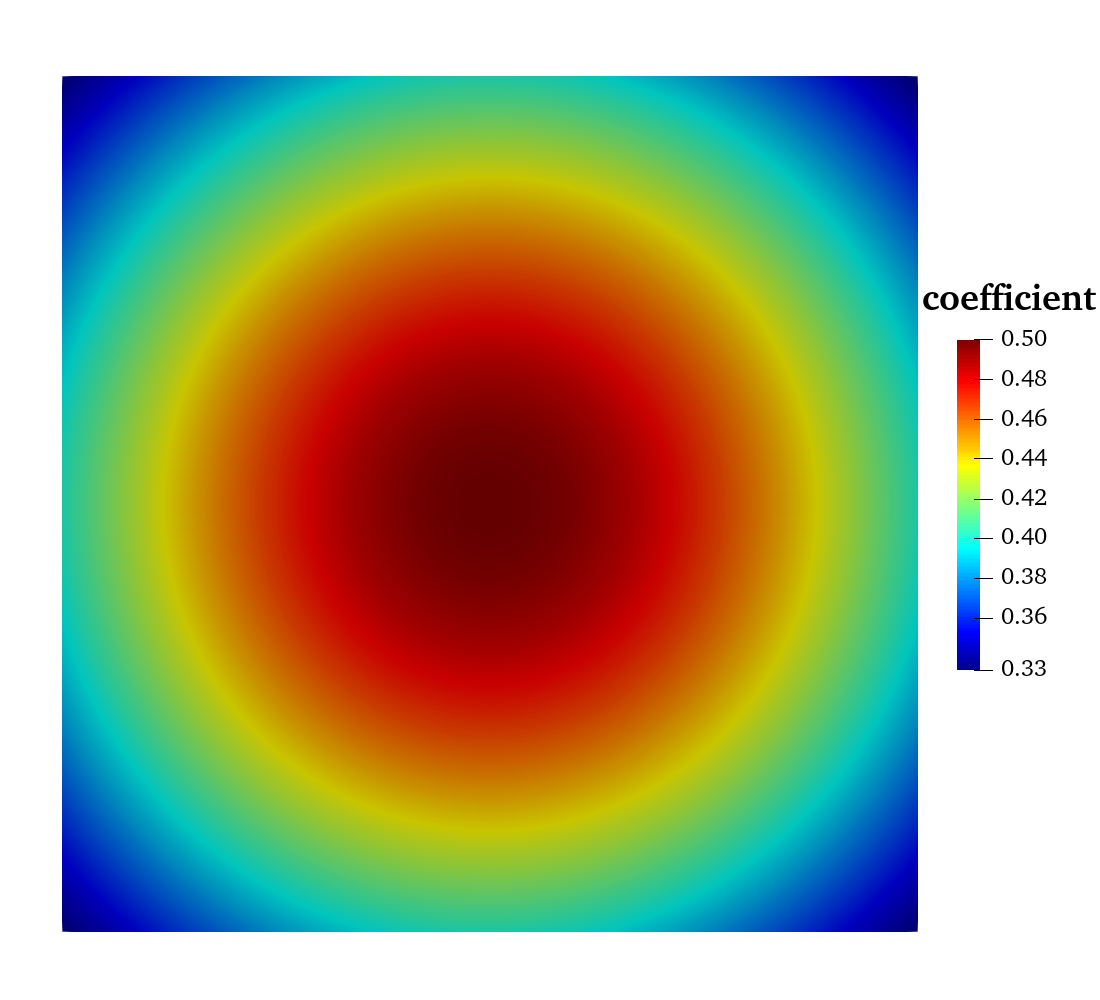}
        \caption{$\kappa$}
        \label{fig:poisson_noisy_observation_true_coefficient}
    \end{subfigure}
    \begin{subfigure}[t]{0.32\textwidth}
        \includegraphics[width=\textwidth]{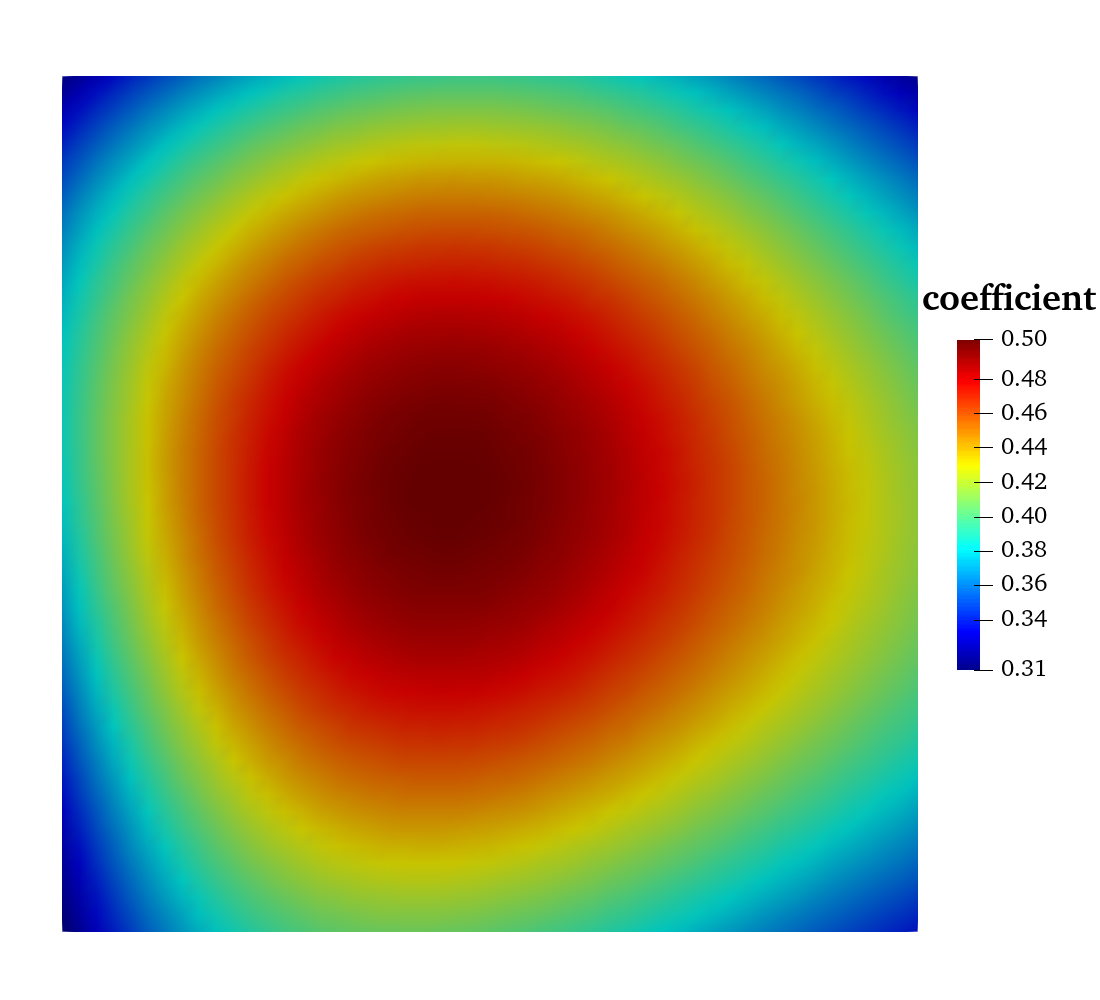}
        \caption{$\kappa^{id}$}
        \label{fig:poisson_noisy_observation_feinn_coefficient}
    \end{subfigure}
    \begin{subfigure}[t]{0.32\textwidth}
        \includegraphics[width=\textwidth]{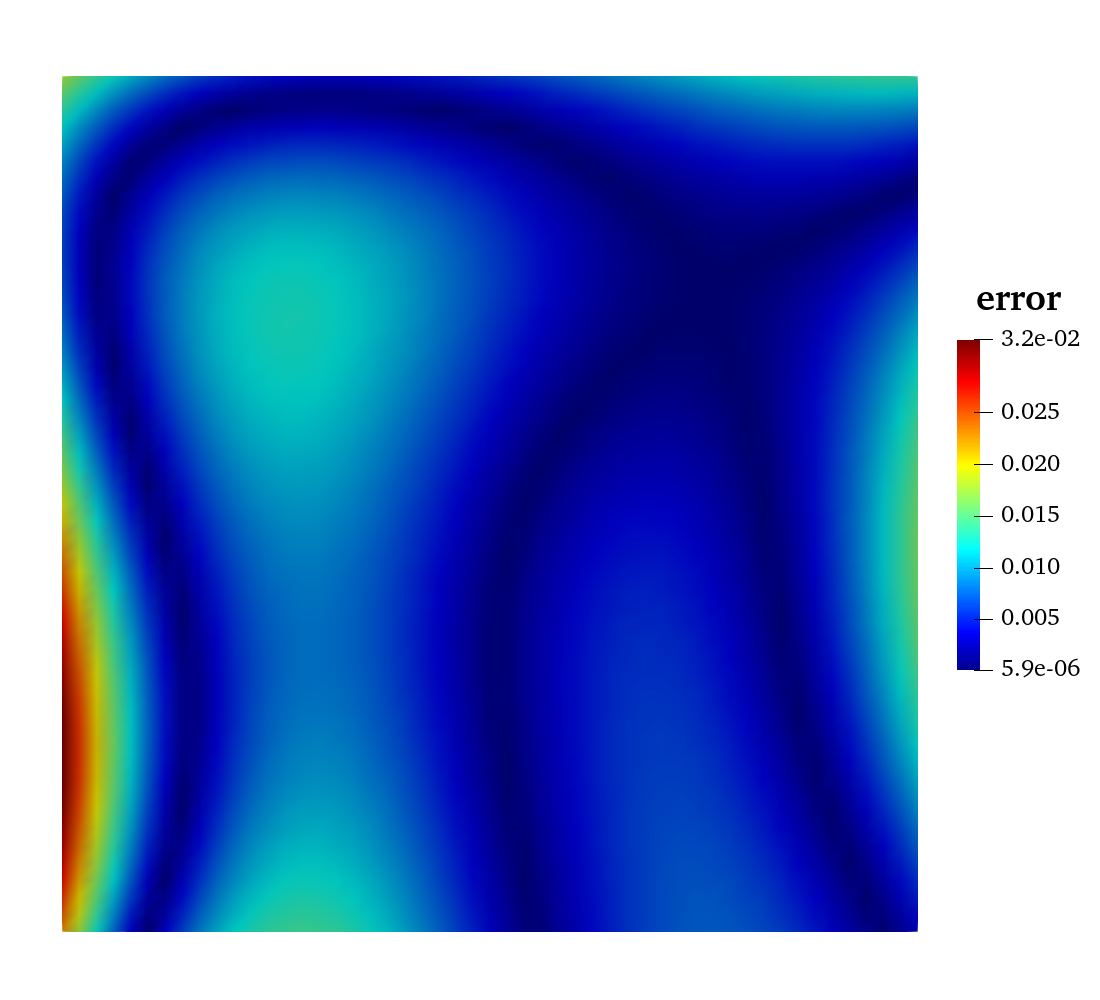}
        \caption{$|\kappa - \kappa^{id}|$}
        \label{fig:poisson_noisy_observation_coefficient_error}
    \end{subfigure}
     
    \caption{Comparison of the true solutions (first column), the \ac{feinn} solutions (second column), and corresponding point-wise errors (third column) for the inverse Poisson problem with noisy observations. The presented results are from a specific experiment. The first row depicts the state, while the second row represents the coefficient.}
    \label{fig:poisson_noisy_observation_results}
\end{figure}

In Fig.~\ref{fig:poisson_noisy_observation_results}, the last two columns display the outcomes from one of our experiments. The identified state $u^{id}$ in Fig.~\ref{fig:poisson_noisy_observation_feinn_state} and its low point-wise error in Fig.~\ref{fig:poisson_noisy_observation_state_error} validate the \acp{feinn} capability of recovering the state despite the presence of noise in the data. Fig.~\ref{fig:poisson_noisy_observation_feinn_coefficient} shows the identified diffusion coefficient $\kappa^{id}$, which, although visually slightly different from $\kappa$ in Fig.~\ref{fig:poisson_noisy_observation_true_coefficient}, still captures its pattern very well. The point-wise error in Fig.~\ref{fig:poisson_noisy_observation_coefficient_error} further confirms that \acp{feinn} effectively predict the values of the diffusion coefficient.

\begin{figure}
    \centering
    \begin{subfigure}[t]{0.32\textwidth}
        \includegraphics[width=\textwidth]{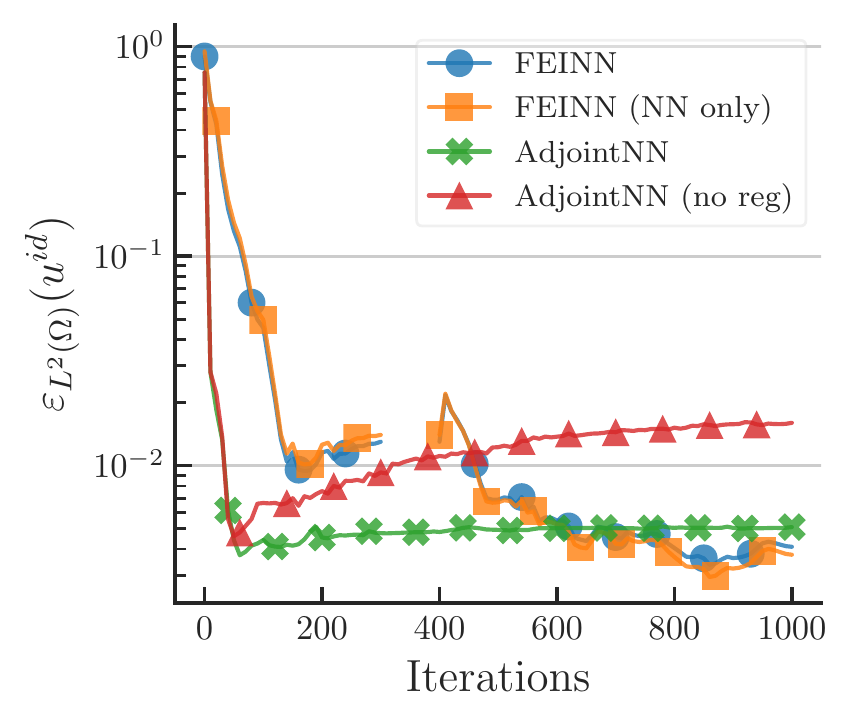}
        \caption{}
        \label{fig:noisy_observation_ul2error_feinn_adjoint_nn_cmp}
    \end{subfigure}
    \begin{subfigure}[t]{0.32\textwidth}
        \includegraphics[width=\textwidth]{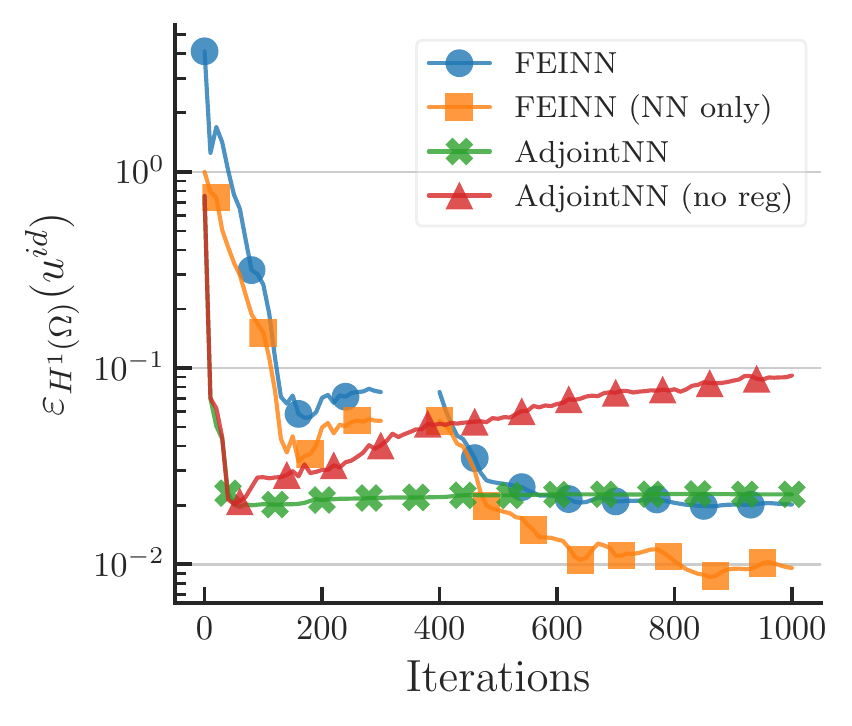}
        \caption{}
        \label{fig:noisy_observation_uh1error_feinn_adjoint_nn_cmp}
    \end{subfigure}
    \begin{subfigure}[t]{0.32\textwidth}
        \includegraphics[width=\textwidth]{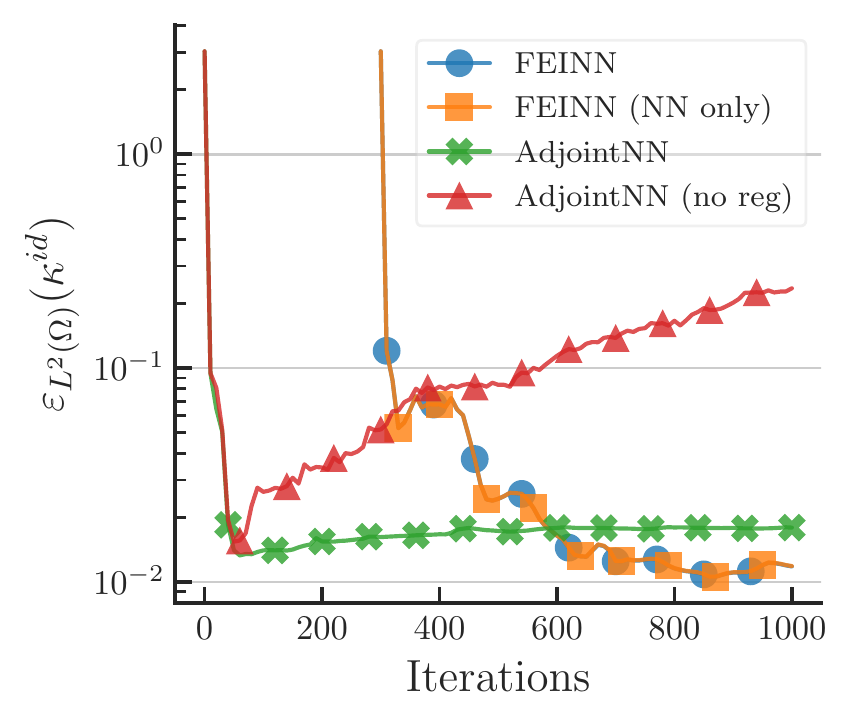}
        \caption{}
        \label{fig:noisy_observation_kl2error_feinn_adjoint_nn_cmp}
    \end{subfigure}
    \caption{Comparison among \acp{feinn} and adjoint \ac{nn} in terms of relative errors during training for the inverse Poisson problem with noisy observations. The optimisation loop was run for 1,000 iterations in both cases.}
    \label{fig:noisy_observation_errors_feinn_adjoint_nn_cmp}
\end{figure}

The error history plots for \acp{feinn} and adjoint \ac{nn} when applied to the inverse Poisson problem with noisy observations are shown in Fig.~\ref{fig:noisy_observation_errors_feinn_adjoint_nn_cmp}.
The optimisation loop was run in both cases up to 1,000 iterations, and we used the same $\kappa_{\mathcal{N}}$ architecture and parameter initialisation.
Consistent with the findings in ~\cite{Hybrid_FEM-NN_2021}, the loss function in the adjoint method requires explicit regularisation. This is evident as the error curves corresponding to adjoint \ac{nn} with no regularisation (label tag ``(no reg)'') start increasing very shortly after the optimisation begins, while the results are much improved by using the regularisation proposed in \cite{Hybrid_FEM-NN_2021}.\footnote{We use $\ell^1$ regularisation on $\kappa_{\mathcal{N}}$. After testing various regularisation coefficients, we have concluded that the best results are obtained for $10^{-3}$.}
Notably, even \emph{without any regularisation}, the \ac{feinn} errors are very stably decreasing. \acp{feinn} could possibly benefit from effective regularisation, but we have not explored this option to keep the method simple and less tuning-dependent. In terms of computational cost, \acp{feinn} demand 0.025 seconds per iteration, while adjoint \ac{nn} takes 0.043 seconds per iteration. {Additionally, in terms of $u$, the errors of the (non-interpolated) \acp{nn} are frequently below their interpolation counterparts during training. This indicates that $u_{\mathcal{N}}$ possesses the capacity to improve $u$ accuracy despite the noisy observations.}

\begin{figure}
    \centering
    \begin{subfigure}[t]{0.68\textwidth}
        \includegraphics[width=\textwidth]{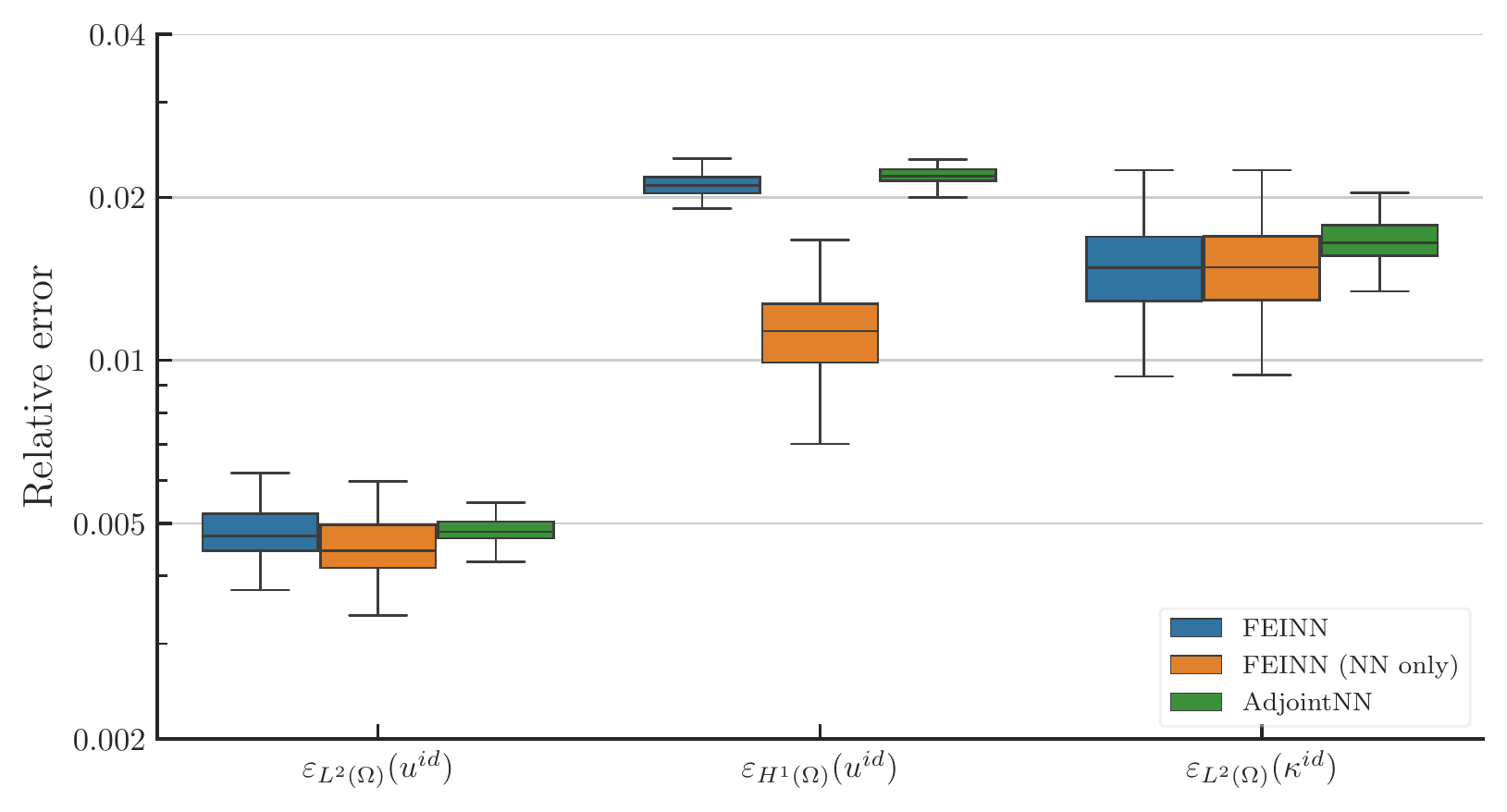}
        \caption{}
        \label{fig:noisy_observation_methods_cmp_boxplot}
    \end{subfigure}
    \begin{subfigure}[t]{0.28\textwidth}
        \includegraphics[width=\textwidth]{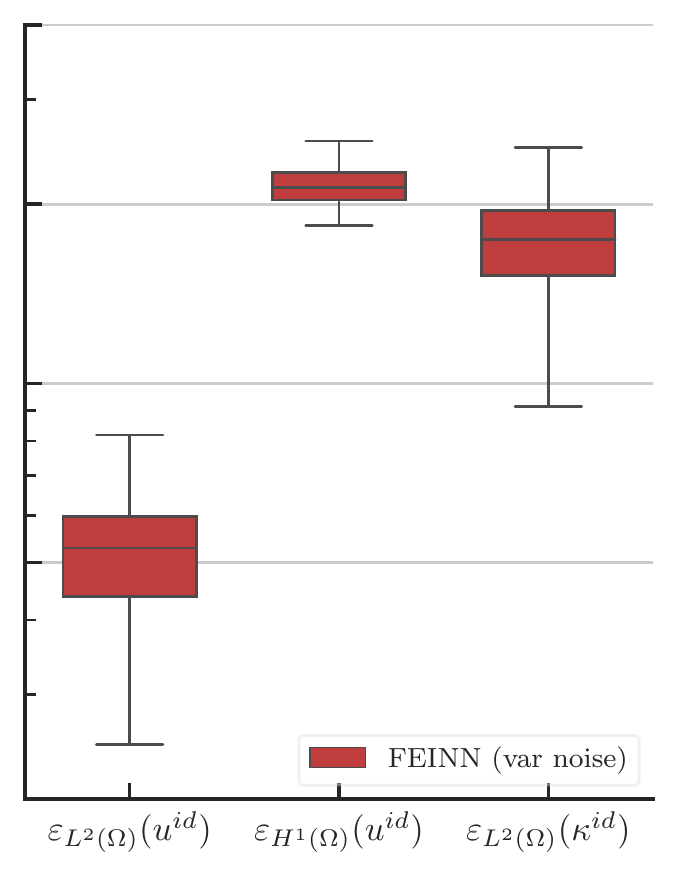}
        \caption{}
        \label{fig:noisy_observation_random_noisy_study}
    \end{subfigure}

    \caption{Left: comparison among \acp{feinn} and adjoint \ac{nn} in terms of relative errors (depicted using box plots) with different \ac{nn} initialisations for the inverse Poisson problem with noisy observations. Right: box plots depicting the relative errors of \acp{feinn} trained with Gaussian noise generated by different random seeds.}
\end{figure}

Let us assess the robustness of \acp{feinn} with respect to \ac{nn} initialisation. We generate the Gaussian noise with the same random seed and repeat the experiment 100 times with differently initialised \acp{nn}. The resulting box plots are shown in Fig.~\ref{fig:noisy_observation_methods_cmp_boxplot}, where label ``\ac{feinn}'' is for the interpolated \acp{nn} and ``\ac{feinn} (\ac{nn} only)'' is for the non-interpolated ones.
{We observe that the smoothness of $u_{\mathcal{N}}$ contributes to enhanced accuracy, as both boxes of $\varepsilon_{L^2(\Omega)}(u^{id})$ and $\varepsilon_{H^1(\Omega)}(u^{id})$ for $u_{\mathcal{N}}$ are positioned lower than their interpolation counterparts.} Besides, \acp{feinn} generally produce very good results, with $\varepsilon_{L^2(\Omega)}(u^{id})$ mostly below $0.6\%$, and $\varepsilon_{L^2(\Omega)}(\kappa^{id})$ mostly under $2\%$. 

In this experiment, To compare the performance of \acp{feinn} against adjoint \ac{nn} with regularisation (as described above), we provide the results for the latter method in the same figure (labelled as ``AdjointNN''). 
We observe that the boxes of $\varepsilon_{H^1(\Omega)}(u^{id})$ and $\varepsilon_{L^2(\Omega)}(\kappa^{id})$ of adjoint \ac{nn} are positioned higher than that of \acp{feinn}, suggesting that \acp{feinn} generally achieve better accuracy in terms of these two relative errors. Furthermore, the state \ac{nn} in \acp{feinn} generalises well and is far more accurate than the the \ac{fe} interpolation. In contrast, adjoint \ac{nn} relies on a \ac{fe} function for state approximation, lacking such capability of \acp{feinn}. 

In this example, we are also interested in exploring how the variability of noise affects \acp{feinn} accuracy. We fix the \ac{nn} initialisation and the distribution of the Gaussian noise ($\epsilon \sim N(0, 0.05^2)$), and repeat the experiment 100 times with different random noise seeds. The resulting box plots are shown in Fig.~\ref{fig:noisy_observation_random_noisy_study} with label ``\ac{feinn} (var noise)''. We observe that the noise randomness impacts the accuracy of \acp{feinn} more than \ac{nn} initialisation randomness, with broader error boxes. Nonetheless, \acp{feinn} are still robust in this scenario, since most $\varepsilon_{L^2(\Omega)}(u^{id})$ are below $0.8\%$ and most $\varepsilon_{L^2(\Omega)}(\kappa^{id})$ are less than $3\%$.

\subsubsection{Inverse heat conduction problem} \label{subsubsec:ihcp}
In our final experiment for this paper, we attack an \ac{ihcp}. In many heat transfer applications, the boundary values are either unavailable or difficult to measure over the entire surface. The goal of \acp{ihcp} is to estimate the surface temperature (Dirichlet boundary value), and/or heat flux (Neumann boundary value), based on temperature data measured at certain points within the domain ~\cite{FEM_IHCP1997}. Our example combines the challenges in ~\cite{FEM_IHCP1997} and ~\cite{PINNsIHCP2022}, where we consider a two-layered half-tube cross-section as the computational domain $\Omega$, as shown in Fig.~\ref{fig:inverse_heat_conduction_true_state}. The domain $\Omega$ can be described in polar coordinates as $\theta \in [0,\pi]$ and $r \in [0.05, 0.11]$. The tube is composed of two layers of media, with a diffusion coefficient of $\kappa_1 = 1$ for $r \in [0.05, 0.08]$, and $\kappa_2 = 100$ for $r \in [0.08, 0.11]$. The unknown boundary values are, in polar coordinates,
\begin{equation*}
    g(\theta, r) = 200, \ r = 0.05, \qquad
    \eta(\theta, r) = -100 -50 \sin(\theta), \ r = 0.11.
\end{equation*}
The horizontal section of the tube is also a Neumann boundary, with known $\eta = 0$.

\begin{figure}
    \centering

    \begin{subfigure}[t]{0.48\textwidth}
        \includegraphics[width=\textwidth]{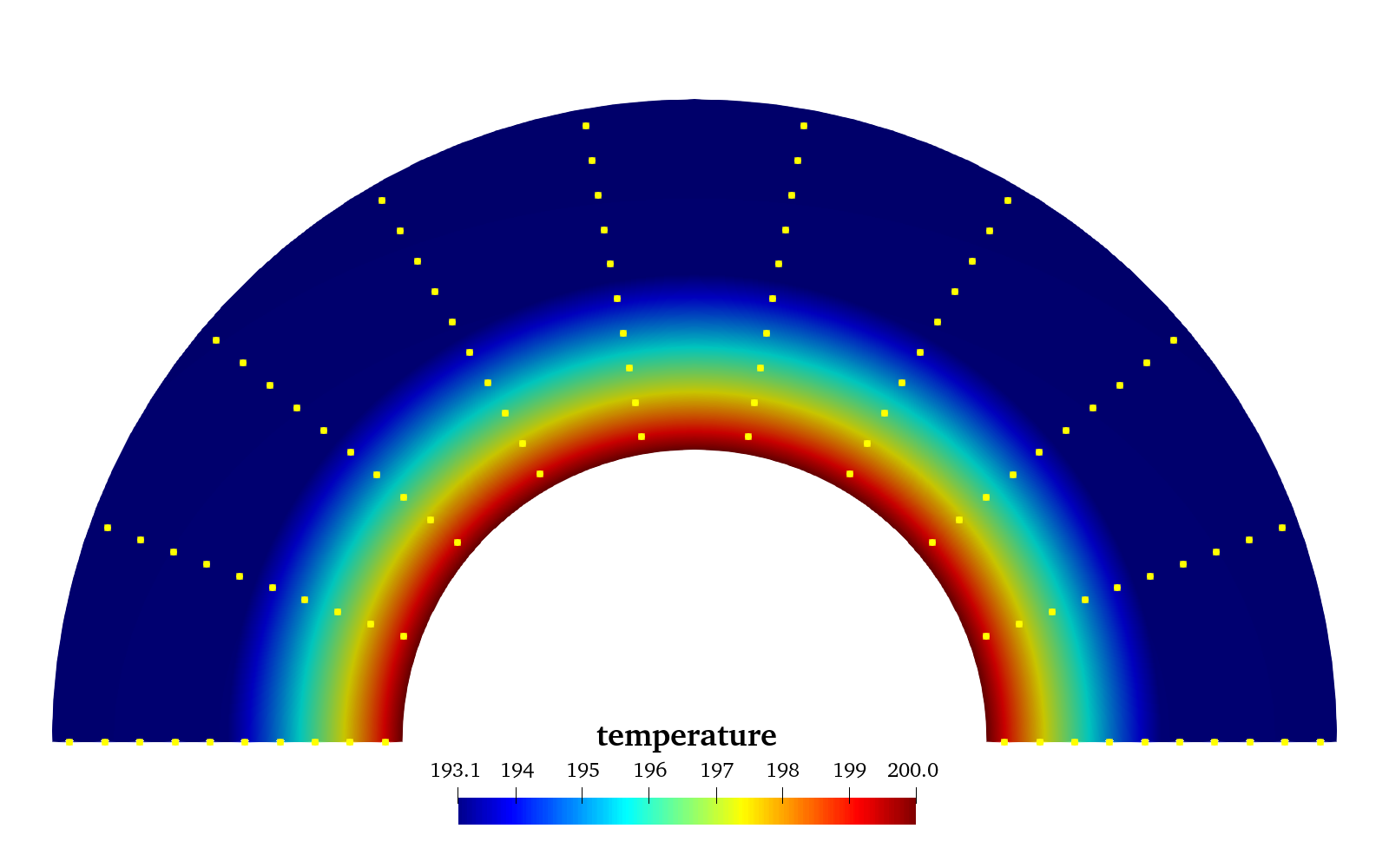}
        \caption{}
        \label{fig:inverse_heat_conduction_true_state}
    \end{subfigure}
    \begin{subfigure}[t]{0.48\textwidth}
        \includegraphics[width=\textwidth]{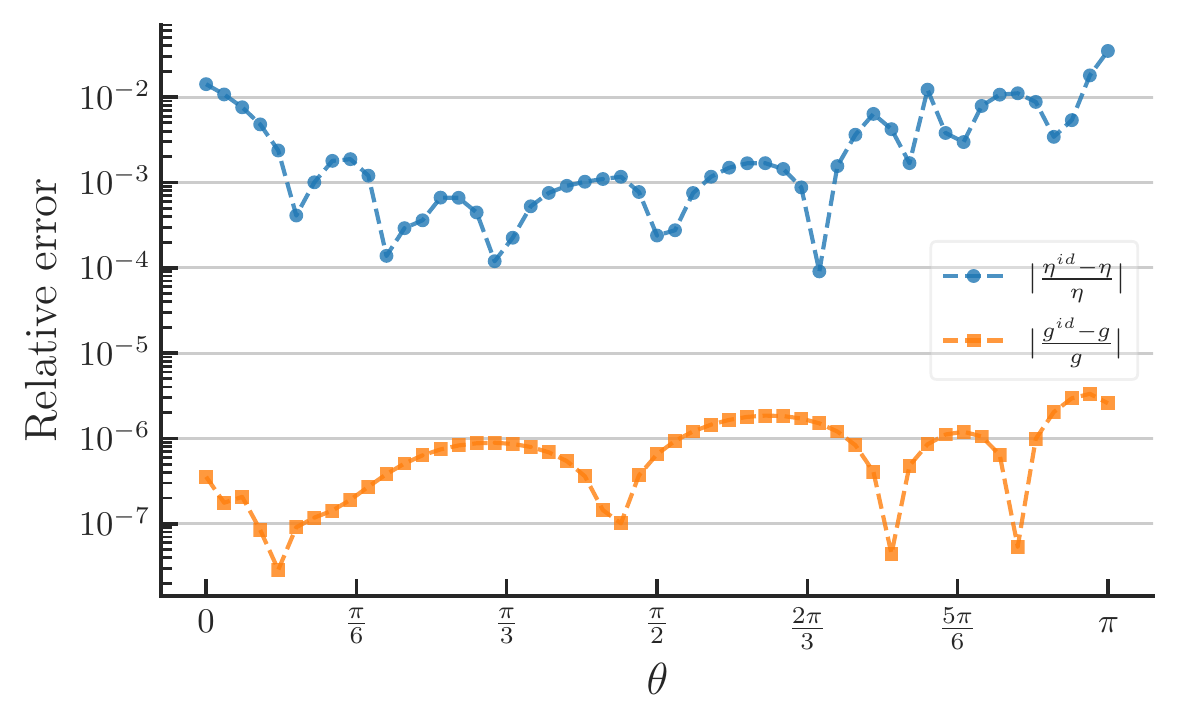}
        \caption{}
        \label{fig:inverse_heat_conduction_pointwise_error}
    \end{subfigure}
    \caption{True temperature distribution and the relative point-wise errors of \acp{feinn} on the Dirichlet and Neumann boundaries for the \ac{ihcp}. Yellow dots on the temperature figure indicate observation locations.}
\end{figure}

We discretise the domain with $2\times 50 \times 50$ triangles and solve the forward problem using \ac{fem} with the aforementioned boundary conditions. Fig.~\ref{fig:inverse_heat_conduction_true_state} shows the \ac{fem} solution of the temperature, and we use the temperature at the yellow dots as our observations. Since the temperature has different patterns in the two layers due to the discontinuity in the diffusion coefficient, we use a deeper \ac{nn} with 6 layers and 20 neurons for each hidden layer ($L_u = 6$, $n_u = 20$) as $u_{\mathcal{N}}$. The Dirichlet boundary value $g$ is just a part of $u$, so in this problem, $u_{\mathcal{N}}$ is defined over the whole domain, including the Dirichlet boundary. We evaluate $u^{id}$ on the Dirichlet boundary to obtain $g^{id}$. We train another \ac{nn} $\eta_{\mathcal{N}}$ with $L_\eta = 3$ and $n_\eta = 20$ on the Neumann boundary to predict the Neumann boundary value. We set the number of training iterations to $[700, 200, 3\times700]$, and use the penalty coefficients $\alpha = [0.001, 0.003, 0.009]$.

\begin{figure}
    \centering
    \begin{subfigure}[t]{0.48\textwidth}
        \includegraphics[width=\textwidth]{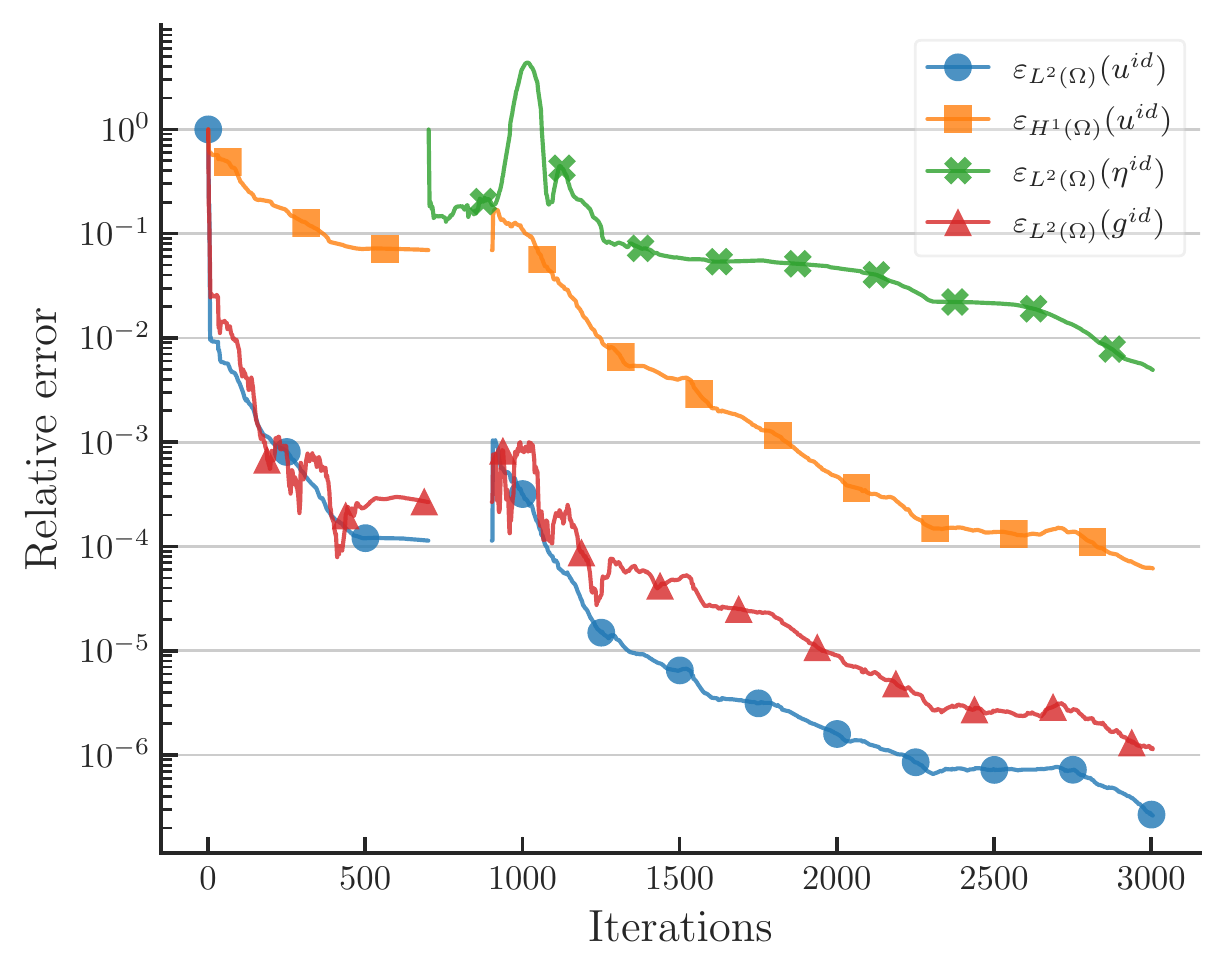}
        \caption{}
        \label{fig:inverse_heat_conduction_error_history}
    \end{subfigure}
    \begin{subfigure}[t]{0.48\textwidth}
        \includegraphics[width=\textwidth, trim={0 -0.42cm 0 0.42cm}, clip]{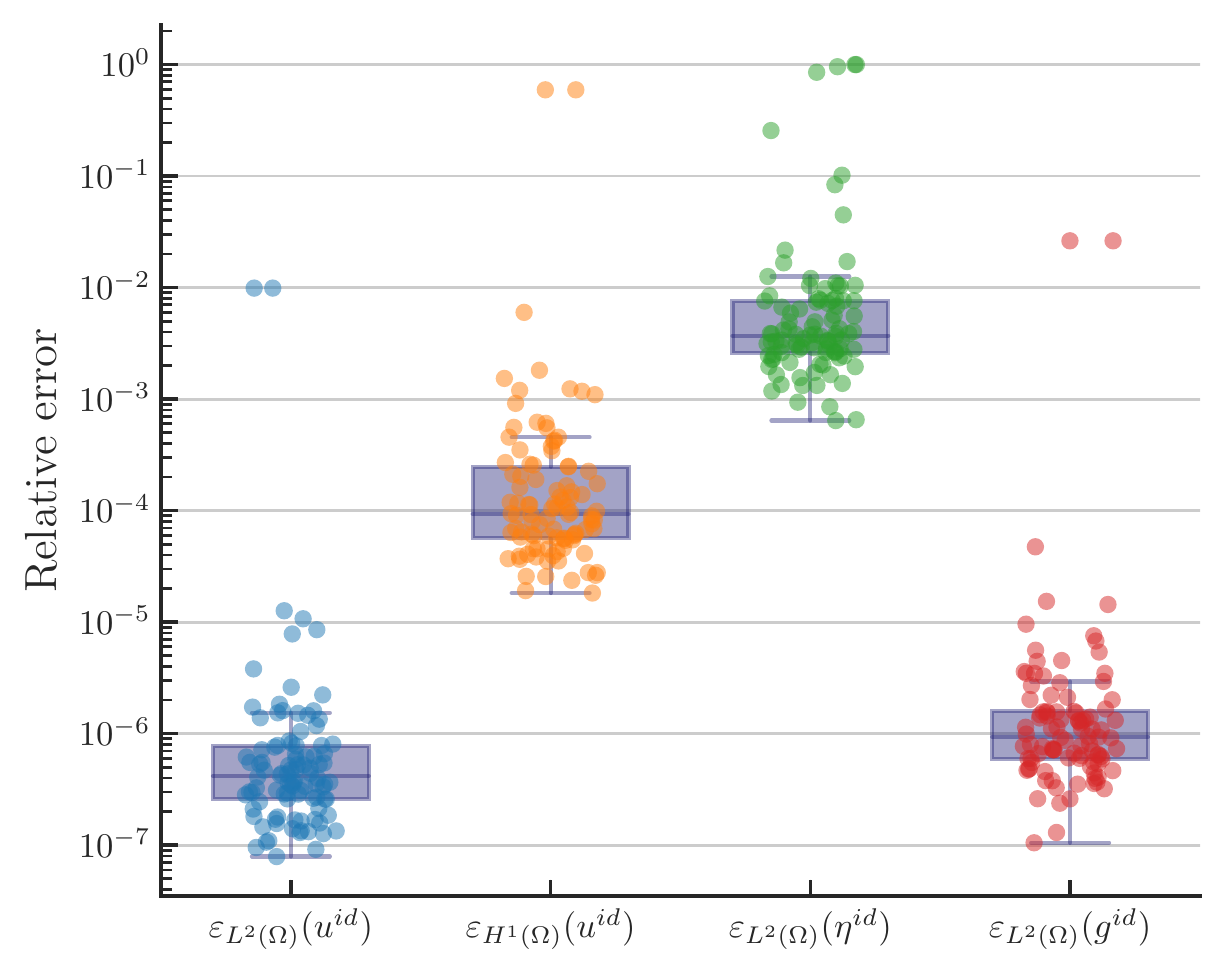}
        \caption{}
        \label{fig:inverse_heat_conduction_network_init_study_boxplot}
    \end{subfigure}
     
    \caption{The error history during training from a specific experiment and the box plots of the relative errors of \acp{feinn} with different initialisations for the \ac{ihcp}.}
\end{figure}

Fig.~\ref{fig:inverse_heat_conduction_pointwise_error} shows the relative point-wise errors of \acp{feinn} solutions for the boundary values $\eta$ and $g$, obtained from one of our experiments. The errors at most of the Neumann boundary points are below $1\%$, indicating accurate recovery. Besides, the identified Dirichlet value $g^{id}$ is even more accurate, with a maximum error of approximately $3 \times 10^{-6}$. Overall, \acp{feinn} excel at accurately reconstructing the boundary values. Fig.~\ref{fig:inverse_heat_conduction_error_history} depicts the history of relative errors during training from the same experiment. We observe that the data step and model parameter initialisation step reduce corresponding errors as expected, and the errors steadily decrease after a few hundred iterations of adjustment in the coupled step. 

To study \acp{feinn}' reliability in solving \acp{ihcp}, we repeat the experiment 100 times with different \ac{nn} initialisations. The resulting errors are presented in Fig.~\ref{fig:inverse_heat_conduction_network_init_study_boxplot} as box plots along with the original data points. Even though the number of observations (100) is much smaller that the \acp{dof} (2,601) of the trial space, \acp{feinn} recover the temperature distribution accurately, with most $\varepsilon_{L^2(\Omega)}(u^{id})$ errors below $10^{-5}$, and only a few outliers with higher errors. The proposed formulation demonstrates robustness despite a significant discontinuity in the diffusion coefficient, a limited number of observations, and no regularisation. Furthermore, the majority of experiments (at least $90\%$) yield remarkably low errors.

\section{Conclusions} \label{sec:conclusions}

In this paper, we propose a general framework, called \acp{feinn}, to approximate forward and inverse problems governed by \emph{low-dimensional} \acp{pde}, by combining \acp{nn} and \acp{fe} to overcome some of the limitations (numerical integration error, treatment of Dirichlet boundary conditions, lack of solid mathematical foundations) of existing approaches proposed in the literature to approximate \acp{pde} with \acp{nn}, such as, e.g., \acp{pinn}. For forward problems, we interpolate the \ac{nn} onto the \ac{fe} space with zero traces (non-homogeneous Dirichlet boundary conditions are enforced via a standard offset \ac{fe} function), and evaluate the \ac{fe} residual for the resulting \ac{fe} function. The loss function is the norm of the \ac{fe} residual. We propose different norms, and suggest the use of standard \ac{fe} preconditioners (e.g., a fixed number of \ac{gmg} cycles) to end up with a well-posed loss function in the limit $h \downarrow 0$. For inverse problems, the unknown model parameters are parametrised via \acp{nn}, which can also be interpolated onto \ac{fe} spaces. The loss function in this case combines the data misfit term with a penalty term for the PDE residual. We propose a three step algorithm to speed up the training process of the resulting formulation, where we perform two cheap data fitting steps (no differential operators involved) to provide a good initialisation for a fully coupled minimisation step.

We have conducted numerous numerical experiments to assess the computational performance and accuracy of \acp{feinn}.  We use forward convection-diffusion-reaction problems to compare \acp{feinn} against \acp{ivpinn}, a recently proposed related method which mainly differs in the treatment of Dirichlet boundary conditions and has been proven to be superior to other \ac{pinn} formulations in certain situations~\cite{BerroneIVPINN2022}. The computational cost per iteration of \acp{ivpinn} and \acp{feinn} is virtually the same. However, \acp{ivpinn} struggle to keep the convergence of \acp{feinn} (and reach the \ac{fem} error) as we increase mesh resolution or polynomial order.
Additionally, the (non-interpolated) \acp{nn} trained with \acp{feinn} exhibits excellent generalisation, with superior performance compared to the \ac{fe} solution and the non-interpolated \ac{nn} composition of \acp{ivpinn}. For singular solutions, both \acp{feinn} and \acp{ivpinn} have comparable performance to \ac{fem}. We evaluate the effect of the residual norm and show how preconditioned norms accelerate the training. Moreover, experiments performed on a non-trivial geometry highlights the capability \acp{feinn} handling complex geometries and Dirichlet boundary condition effortlessly, which is not the case of \acp{ivpinn} or standard \acp{pinn}. 

In the experiments for the inverse problems, we show that \acp{feinn} are capable of estimating unknown diffusion coefficient from partial or noisy observations of the state and recovering the unknown boundary values from discrete observations. We additionally compare the performance of \acp{feinn} against the adjoint-based \ac{nn} method~\cite{NNAugumentedFEM2017,Hybrid_FEM-NN_2021}. The numerical results demonstrate that \acp{feinn} exhibit greater robustness for partial observations and are comparable for noisy observations. However, adjoint methods require the tunning of the regularisation term to be effective, while \acp{feinn} are robust without regularisation. The conducted experiments also prove that the three-step training process employed by \acp{feinn} is a sound strategy.

This work can be extended in many directions. First, one could consider transient and/or nonlinear \acp{pde}, in which \acp{nn} and non-convex optimisation have additional benefits compared to standard linearisation and iterative linear solvers in \ac{fem}. Besides, while this work concentrates on problems in $H^1$, the framework can be extended to problems in $H(\mathrm{ curl})$ and $H(\mathrm{div})$ spaces, combined with compatible \ac{fem}~\cite{Arnold2006,FEMPAR_Hcurl2019}. To target large scale problems, one could design domain decomposition~\cite{D3M2020,XPINNs2022} and partition of unity methods~\cite{FBPINNs2021} to end up with suitable algorithms for massively parallel distributed-memory platforms and exploit existing parallel \ac{fe} frameworks \texttt{GridapDistributed.jl}~\cite{GridapDistributed2022}. Lastly, we want to explore in the future the usage of adaptive meshes~\cite{radaptiveDL2022} to exploit the nonlinear approximability of \acp{nn} within the same training loop.

\section{Acknowledgments}

This research was partially funded by the Australian Government through the Australian Research Council
(project numbers DP210103092 and DP220103160). This work was also supported by computational resources provided by the Australian Government through NCI under the NCMAS and ANU Merit Allocation Schemes. W. Li acknowledges the support from the Laboratory for Turbulence Research in Aerospace and Combustion (LTRAC) at Monash University through the use of their HPC Clusters.

\printbibliography

\end{document}